\newcommand{\mm}{\mathrm}
\newcommand{\be}{\begin{equation}}
\newcommand{\bea}{\begin{equation}\begin{aligned}}
\newcommand{\beas}{\begin{equation*}\begin{aligned}}
\newcommand{\eeas}{\end{aligned}\end{equation*}}
\newcommand{\eea}{\end{aligned}\end{equation}}
\newcommand{\ee}{\end{equation}}
\begin{document}
\begin{CJK*}{GBK}{song}
\begin{frontmatter}
\title{Asymptotical Behavior of Global Solutions of the Navier--Stokes--Korteweg Equations with Respect to\\
 Capillarity Number at Infinity}
\author[FJ,sJ]{Fei Jiang}
\ead{jiangfei0591@163.com}
\author[FJ]{Pengfei Li\corref{cor1}}
 \ead{pfliyou@163.com}
\author[FsJ]{Jiawei Wang}
\ead{wangjiawei@amss.ac.cn}
\cortext[cor1]{Corresponding author.} 
\address[FJ]{School of Mathematics and Statistics, Fuzhou University, Fuzhou, 350108, China.}
\address[sJ]{Key Laboratory of Operations Research and Control of Universities in Fujian, Fuzhou 350108, China.}
\address[FsJ]{Hua Loo-Keng Center for Mathematical Sciences, Chinese Academy of Sciences,
Beijing 100190, China. }
\begin{abstract}
Vanishing capillarity in the Navier--Stokes--Korteweg (NSK) equations has been widely investigated, in particular, it is well-known that the NSK equations converge to the Navier--Stokes (NS) equations by vanishing capillarity number. To our best knowledge, this paper first investigates the behavior of large capillary number, denoted by $\kappa^2$, for the global(-in-time) strong solutions with small initial perturbations of the three-dimensional (3D) NSK equations in a slab domain with Navier(-slip) boundary condition. Under the well-prepared initial data, we can construct a family of global strong solutions of the 3D incompressible NSK equations with respect to $\kappa>0$, where the solutions converge to a unique solution of  2D incompressible NS-like equations as $\kappa$ goes to infinity.
\end{abstract}
\begin{keyword}
 Navier--Stokes--Korteweg equations; asymptotical behavior; uniform estimates; global-in-time solutions.
\end{keyword}
\end{frontmatter}
\newtheorem{thm}{Theorem}[section]
\newtheorem{lem}{Lemma}[section]
\newtheorem{pro}{Proposition}[section]
\newtheorem{concl}{Conclusion}[section]
\newtheorem{cor}{Corollary}[section]
\newproof{pf}{Proof}
\newdefinition{rem}{Remark}[section]
\newtheorem{definition}{Definition}[section]

\section{Introduction}\label{introud}
\numberwithin{equation}{section}
A classical model to  describe the dynamics of an inhomogeneous
 incompressible fluid endowed with internal capillarity  (in the diffuse interface setting) is the following  {general system of} incompressible Navier--Stokes--Korteweg (NSK) equations:
\begin{align}\label{1.1}
\begin{cases}
\rho_t  +    \mathrm{div}(\rho v) = 0, \\
\rho (v_t +  v\cdot \nabla v) - \mathrm{div}(\mu (\rho )\mathbb{D}v)  + \nabla  {P}  = \mathrm{div}  {K}  ,\\
\mm{div}  v=0,
\end{cases}
\end{align}
where ${\rho (x,t)} \in \mathbb{R}^{+}$, $v(x,t) \in {\mathbb{R}^3}$ and $P(x,t)$ denote the density, velocity and kinetic pressure of the fluid {resp.} at the spacial position $x \in {\mathbb{R}^3}$ for time $t\in\mathbb{R}^+_0:=[0, {+\infty} )$. The differential operator $\mathbb{D}$ is defined by $\mathbb{D}v = \nabla v+\nabla v^{\top}$, where the superscript $\top$ represents the transposition. The shear viscosity function $\mu$, the capillarity function $\tilde{\kappa}$ are known smooth functions $\mathbb{R}^+ \to \mathbb{R}$, and satisfy $\mu>0$, $\tilde{\kappa}>0$. The general capillary tensor is written as
\begin{align*}
K=\left( {\rho \mm{div}(\tilde{\kappa}(\rho )\nabla \rho ) + \left( {\tilde{\kappa}(\rho ) - \rho\tilde{\kappa}'(\rho )} \right){{\left| {\nabla \rho } \right|}^2}} /2\right)\mathbb{I} -  {\tilde{\kappa}(\rho )\nabla \rho  \otimes \nabla \rho }, 
\end{align*}
where $\mathbb{I}$ is the identity matrix. To conveniently investigate the asymptotical behavior of capillarity number, we considers that \emph{$\mu$ and $\tilde{\kappa}$ are positive constants} \cite{BDG2008}, thus it holds that
\begin{align}
\label{202501232119}
 \mathrm{div}(\mu (\rho )\mathbb{D}v) =\mu\Delta v\mbox{ and } \mathrm{div}K =\kappa^2 \rho\nabla \Delta \rho,
\end{align}
where we have defined that $\kappa:=\sqrt{\tilde{\kappa}}>0$. We call 
the parameters $\mu$ and $\kappa^2$ the viscosity coefficient and the capillarity number, resp..

In classical hydrodynamics, the interface between two immiscible compressible/incompressible fluids is modeled as a free boundary which evolves in time. The equations describing the motion of each fluid are supplemented by boundary conditions at the free surface  involving the physical properties of the interface. For instance, in the free-boundary formulation, it is assumed that the interface has an internal surface tension. However, when the interfacial thickness is comparable to the length scale of the phenomena being examined, the free-boundary description breaks down. Diffuse-interface models provide an alternative description where the surface tension is expressed in its simplest form as $\mm{div}K$, i.e., the capillary tension which was introduced by Korteweg in 1901 \cite{korteweg1901forme}. Later, its modern form was derived by Dunn and Serrin \cite{DS1985}.

In the physical view, it can serve as a phase transition model to describe the motion of compressible fluid with capillarity effect. Owing to its importance in mathematics and physics, there has been profuse works for the mathematical theory of the corresponding compressible NSK system, for example, see \cite{MR4412067,DDL2003,MR2805863} for the global(-in-time) weak solutions with large initial data, \cite{MR1810272,MR4201123,MR4099657,MR4354131,MR4188843} for global strong solutions with small initial data, \cite{MR3160440,HL1994,MR2436788,MR2749439} for local(-in-time) strong/classical solutions with large initial data, \cite{MR3168914} for stationary solutions, \cite{F2016} for highly rotating limit, \cite{MR4228314,MR4369180} for the stability of viscous shock wave, \cite{MR4047969} for the maximal $L^p$--$L^q$ regularity theory, \cite{MR4354130,MR4340485,MR4312286} for the decay-in-time of global solutions, \cite{BYZ2014,MR3032985,MR3433629,MR3210149,HYZ2017} for the vanishing capillarity limit, \cite{K2012,K2014,HPZ2018NA} for the nonisentropic case,  \cite{LY2016,SL2019,JX2022}  for the low Mach number limit, \cite{MR3613503,MR2354691,MR3961293,MR2198187} for the inviscid case, and so on.

The incompressible NSK system \eqref{1.1} has been also widely investigated. The local existence of a unique strong solution was obtained by Tan--Wang \cite{TW2010}. Burtea--Charve also established the existence result of strong solutions with small initial perturbations in Lagrangian coordinates \cite{BC2017}, and further presented that the lifespan goes to infinity as the
capillarity number goes to zero. Yang--Yao--Zhu proved that as both capillarity number and viscosity coefficient vanish, local solutions of the Cauchy problem for the incompressible NSK system converge to the one of the inhomogeneous incompressible Euler equations  \cite{YYZ2015}. Recently a similar result was also established by Wang--Zhang for the fluid domain being a horizontally periodic slab with Navier-slip boundary conditions \cite{MR4713202}.
  
In addition, it also has been investigated that the capillarity under the large capillarity number stabilizes the motion of the incompressible fluid endowed with internal capillarity. Bresch--Desjardins--Gisclon--Sart derived that the capillarity slows down the growth rate of linear Rayleigh--Taylor (RT) instability,  which occurs when a denser fluid lies on top of a lighter fluid under gravity \cite{BDG2008}. 
Li--Zhang proved that the capillarity can inhibit RT instability for properly large capillarity number by the linearized motion equations \cite{MR4622412} (Interesting readers further refer to \cite{Na2023,MR4622412,MR4434208,MR4645122} for the existence of RT instability solutions in the incompressible NSK system with small capillarity number). Later, motivated by the result of magnetic tension inhibiting the RT instability in the two-dimensional (2D) non-resistive magnetohydrodynamic (MHD) fluid in \cite{MR4587686}, Jiang--Li--Zhang mathematically proved that the capillarity inhibits RT instability in the 2D NSK equations in the Lagrangian coordinates \cite{JLZa2023}. Recently Jiang--Zhang--Zhang further verified such inhibition phenomenon for the 3D case in Eulerian coordinates \cite{JZZa2024}. Jiang--Zhang--Zhang's result rigorously presents that the capillarity has the stabilizing effect as well as the magnetic tension in the MHD fluid.

It is well-known that the larger the magnetic tension, the stronger the field intensity in the MHD fluid.
Moreover the \emph{local} solutions of the system of the idea MHD fluid with well-prepared initial data tend to a solution of a 2D Euler flow coupled with a linear transport equation as the field intensity goes to infinity, see \cite{JJX2019,Goto1990,Browning1982} for relevant results. This naturally motivates us to except a similar result in the incompressible NSK system. In this paper, under  the well-prepared initial data, we also construct a family of the \emph{global} strong solutions of the 3D incompressible NSK equations with respect to $\kappa$, where the solutions converge to a unique solution of a 2D incompressible Navier--Stokes-like equations as $\kappa$ goes to infinity, see Theorem \ref{thm3}. As Lin pointed out in \cite{Lin2012SomeAI}, the stratified fluids with the internal surface tension, the viscoelastic fluids, the non-resistive MHD fluids and the diffuse-interface model (or NSK model) can also be regarded as complex fluids with elasticity. Our mathematical result in Theorem \ref{thm3} supports that the capillarity has the role of elasticity again. 

\section{Main results}\label{202412081531}

This section is devoted to introducing our main results in details.
Let us consider a rest stat of the system \eqref{1.1}. We choose the equilibrium density $\bar{\rho}$ in the rest state to satisfy
\begin{align}
\label{202501232128}
\bar{\rho}=ax_3+b\mbox{ with given constants }a,\ b>0.
\end{align}
\emph{Without loss of generality, we consider}
\begin{align}
\label{202501232saf128}a=b=1.
\end{align}
Denoting the perturbations around the rest state $(\bar{\rho},0)$ by
$$\varrho=\rho-\bar{\rho},\ \ u=v-0,$$
and then recalling the relation 
\begin{align*} 	
\Delta \rho\nabla\rho=  \nabla(\rho\Delta  \rho) - \rho\nabla \Delta  \rho,\end{align*}
we obtain the following perturbation system from \eqref{1.1} with \eqref{202501232119}:
\begin{equation}\label{1.2}
 \begin{cases}
 \varrho _t+u\cdot\nabla \varrho+ u_3=0,\\
\rho (u_{t}+ u\cdot  \nabla u)+\nabla  \beta =\mu\Delta u-\kappa^2\Delta{\varrho}( \mathbf{e}^3+ \nabla{\varrho} )
	, \\
\mm{div}  u=0,
\end{cases}
\end{equation}
where $\mathbf{e}^3$ represents the unit vector with the third component being $1$ and we have defined that $\beta:= P -\kappa^2\rho\Delta \rho$. 
The initial condition reads as follows
\begin{align}
\label{1.8saf}
(u,\varrho)|_{t=0}=(u^0,\varrho^0).
\end{align}   
Here and in what follows, we always use the right superscript $0$ to emphasize the initial data.  

 We consider that the fluid domain denoted by $\Omega$ is a slab, i.e. 
\begin{align*}
 	\Omega:=\mathbb{R}^2\times(0,h),
 \end{align*} The 2D domain
$\mathbb{R}^2\times \{0,h\}$, denoted by $\partial\Omega$,  is  the  boundary of  $\Omega$.  
We focus on the following Navier(-slip) boundary conditions on the velocity on $\partial\Omega$:
	\begin{gather*}
	u|_{\partial\Omega}\cdot{\mathbf{n}}=0,  \ ((\mathbb{D}u|_{\partial\Omega}){\mathbf{n}})_{\mm{tan}}=0,
	\end{gather*}
	where  $\mathbf{n}$ denotes the unit outward normal to $\Omega$ and the subscript ``tan" means the tangential component of a vector (for example, $u_{\mm{tan}}=u-(u\cdot \mathbf{n})\mathbf{n}$) \cite{DLX2018,MR4342183}. The Navier slip boundary conditions describe
an interaction between a viscous fluid and a solid wall.
	Since $\Omega$ is a slab domain, the Navier boundary conditions are equivalent to the boundary condition
	\begin{align}
	(u_3,\partial_3 u_1,\partial_3 u_2)|_{\partial\Omega}=0.
	\label{20220202081737}
	\end{align}
We mention that it is not clear to us \emph{whether our main results in Theorems \ref{thm2} and \ref{thm3} can be extended to the non-slip boundary condition of velocity}.
 
However, to investigate the asymptotical behavior of solutions with respect to capillarity number at infinity and to except the vanishing of $\varrho$, we set 
\begin{align*}
\varrho=\kappa  ^{-1}\sigma  
\end{align*}
 and then transform \eqref{1.2} into
\begin{align}\label{1.7}
\begin{cases}
\sigma_t+u\cdot\nabla \sigma+\kappa  u_3=0,\\
\rho (u_t+ u\cdot\nabla u)
+\nabla\beta
=\mu\Delta u-\Delta \sigma(\kappa \mathbf{e}^3+\nabla \sigma),\\
\mm{div}  u=0.
\end{cases}
\end{align}  
The corresponding initial condition reads as follows
\begin{align}
\label{1.8x}
(u,\sigma)|_{t=0}=(u^0,\sigma^0).
\end{align}    In addition, to avoid the vacuum of density, we assume that 
 \begin{align}
d\leqslant \inf\limits_{x\in\Omega}\big\{{\rho}^{0}(x)\big\} ,\mbox{ where } {\rho}^{0}:=\bar{\rho}+\kappa  ^{-1}\sigma^0\mbox{ and }d\in \mathbb{R}^+.
\label{202412051008}
\end{align} 
  Before stating our main results, we shall introduce simplified notations, which will be used throughout this paper.

 (1) Simplified basic notations:   $I_a:=(0,a)$ denotes an  interval, in particular, $I_\infty=\mathbb{R}^+=(0,\infty)$.  $\overline{S}$ denotes the closure of a set $S\subset \mathbb{R}^n$ with $n\geqslant 1$, in particular, $\overline{I_T} =[0,T]$ and $\overline{I_\infty} = \mathbb{R}^+_0$. $a\lesssim b$ means that $a\leqslant cb$ for some constant $c>0$, where $c >0$ at most depends on  the domain $\Omega$, $\bar{\rho}$, $\mu$, $d$ in \eqref{202412051008}, and may vary from line to line. Sometimes we also denote $c$ by $c_i$ for emphasizing that $c_i$ is fixed, where $i=1$, $2$. $\partial_i:=\partial_{x_i}$, where $1\leqslant i\leqslant 3$. Let $f:=(f_1,f_2,f_3)^{\top}$ be a vector function defined in a 3D domain, we define that $f_{\mm{h}}:=(f_1,f_2)^{\top}$ and
$\mm{curl}{f}:=(\partial_{2}f_3-\partial_{3}f_2,
\partial_{3}f_1-\partial_{1}f_3,\partial_{1}f_2-\partial_{2}f_1)^{\top}$.  $\nabla^{\bot}=(-\partial_2,\partial_1,0)$, $\nabla_{\mm{h}}:=(\partial_{1},\partial_{2})^{\top}$, $\nabla^{\perp}_{\mm{h}}:=(-\partial_{2},\partial_{1})^{\top}$, $\mm{div}_{\mm{h}}\cdot:=\partial_1\cdot +\partial_2\cdot$ and  $\Delta_\mm{h}:=\partial^2_{1}+\partial^2_{2}$. 
	  For the simplicity, we denote $\sqrt{\sum_{i=1}^n\|w_i\|_X^2}$ by $\|(w_1,\cdots,w_n)\|_X$, where $\|\cdot\|_X$ represents a norm, and $w_i$ is scalar function or vector function for $1\leqslant i\leqslant n$.	
	
(2) Simplified Banach spaces, norms and semi-norms:
\begin{align}
&L^p:=L^p (\Omega)=W^{0,p}(\Omega),\
{H}^i:=W^{i,2}(\Omega ), \
{H}^i_{\mm{loc}}:=W^{i,2}_{\mm{loc}}(\Omega ), \
{H}^j_0:=\{\phi\in H^j~|~\phi|_{\partial\Omega}=0\},  \nonumber \\[1mm]
&{ {H}}^3_{\bar{\rho}}:=\{\phi\in H^3_0~|~\partial^2_{3}\phi|_{\partial\Omega}=0, \partial_{3}\phi |_{\partial\Omega}=-\kappa\bar{\rho}'|_{\partial\Omega}\}, \ \mathcal{H}^i:=\{u\in H^i~|~\mm{div}u=0,\ u_3|_{\partial\Omega}=0\},\nonumber\\ 
&    \mathcal{H}^k_{\mathrm{s}}:=\{w\in \mathcal{H}^k~|~ \partial_3w_1 =\partial_3w_2=0\mbox{ on }\partial\Omega \},\  \|\cdot \|_i :=\|\cdot \|_{H^i},\    \|\cdot\|_{{i},l}:=\sum_{\alpha_1+\alpha_2=i} \|\partial_{1}^{\alpha_1}\partial_{2}^{\alpha_2}\cdot\|_l, \nonumber
\end{align}
where $1\leqslant p\leqslant \infty$ is a real number, and $i$, $l \geqslant 0$, $j\geqslant 1$, $k\geqslant 2$ are integers.

(3) Simplified spaces of functions with values in a Banach space:
\begin{align} 
&   {\mathfrak{P}} _{T}:=\{\sigma\in
C^0(\overline{I_T},   { {H}}^3_{\bar{\rho}})~|~ \sigma_t\in C^0(\overline{I_T},H^1)\cap  L^2(I_T,{H}^2)\}, \nonumber\\
& {\mathcal{V}}_{ T}: =  \{u\in C^0(\overline{I_T},{\mathcal{H}^2_{\mathrm{s}}} )\cap L^2(I_T,{H}^3) ~|~
u_t\in C^0(\overline{I_T},L^2 )\cap  L^2(I_T,{H}^1)\}. \nonumber
\end{align}
			
(4) Energy/dissipation functionals (generalized):
\begin{align*}
&E(t):=\|(   \sigma \|_3^2
+\|u\|_2^2
+\| \kappa   u_3\|_{1}^2
+\|\kappa   (\Delta_{\mm{h}} \sigma,\nabla_{\mm{h}} \partial_3\sigma)\|_{0}^2,\ E^0:=E(0) ,\\
&\mathcal{E}(t):=   \|\sigma_{t}\|_1^2 +\|u_t\|_0^2+ E(t),\ \mathcal{D}(t):=   \| \sigma\|^2_{1,2}+\|(\sigma_t,\nabla u)\|^2_{2}+\|u_t\|^2_1. 			\end{align*}
	 
Now we state the first result, which presents that the initial-boundary value problem \eqref{20220202081737}--\eqref{1.8x} with small initial data admits a unique global strong solution with uniform estimates with respect to $\kappa  $. 
 \begin{thm}\label{thm2}
Let  $\mu$, $\kappa$, $d >0$ be given, and $\bar{\rho}$ be defined by \eqref{202501232128}. There exist constants $c_1$, $c_2$ and $\chi$, where
\begin{align}c_1,\ c_2\geqslant 1\mbox{ and } \chi:= \max\{\kappa  ^{-1},c_2\}  ,
\label{2022412082055}
\end{align} 
 such that, for any $(\sigma^0,u^0) $ satisfying \eqref{202412051008},
 \begin{align}
&(\sigma^0,u^0)\in H^3_{\bar{\rho}}\times {\mathcal{H}^2_{\mathrm{s}}}\mbox{ and } E^0\leqslant  (3c_1 \chi^9)^{-3},
\label{2022412051007}
\end{align}  
  the initial-boundary value problem \eqref{20220202081737}--\eqref{1.8x} has a unique global strong solution 
  $$(\sigma,u,\nabla\beta)\in  {\mathfrak{P}} _{\infty}\times {\mathcal{V}}_{\infty}\times C^0(\mathbb{R}_0^+,L^2).$$
   Moreover the solution satisfies the following  estimates
\begin{align}\label{1.12}
&\mathcal{E}(t)+\int_0^t\mathcal{D}(\tau)d\tau\leqslant 2c_1\chi^9E^0< 1 ,\\
 &    \|\nabla\mathcal{P}(t)\|_{0}^2\lesssim  c_1\chi^9(1+\chi^2  ){E^0}\mbox{ for any }t>0 
 \label{202412061425}
            \end{align}
and 
\begin{align}
       & \|(\kappa   (\Delta_{\mm{h}} \sigma,\nabla_{\mm{h}}\partial_{3}\sigma), \nabla\mathcal{P} )\|_{L^2( I_T,H^1)}^2 \lesssim  c_1\chi^9(1+\chi^2 +T) {E^0}\mbox{ for any }T>0,\label{1sdaf2}
\end{align}
where $\mathcal{P}:=\beta+\kappa   \partial_3\sigma$.
\end{thm}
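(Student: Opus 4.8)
The plan is to prove Theorem \ref{thm2} along the classical scheme \emph{local existence} $\to$ \emph{uniform a priori estimates} $\to$ \emph{continuation}, the whole difficulty being to make every bound uniform in both the time $t$ and the capillarity number $\kappa$. First I would build a unique local(-in-time) solution $(\sigma,u,\nabla\beta)$ on a short interval $[0,T]$ by a linearization/iteration argument: freeze the density $\rho$ and the transport velocity, solve the linear transport equation $\eqref{1.7}_1$ for $\sigma$ and the linear Stokes-type problem $\eqref{1.7}_2$--$\eqref{1.7}_3$ (with the Navier-slip conditions \eqref{20220202081737}) for $(u,\nabla\beta)$, and close a contraction in a suitable norm. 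The spaces $H^3_{\bar{\rho}}$ and $\mathcal{H}^2_{\mm{s}}$ encode exactly the compatibility/boundary constraints that are preserved along the iteration, so the limit lies in $\mathfrak{P}_T\times\mathcal{V}_T\times C^0(\overline{I_T},L^2)$. Granting this, the theorem reduces to the closed a priori estimate behind \eqref{1.12}, after which a bootstrap extends $T$ to $+\infty$.

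The structural backbone is a \emph{cancellation of the large-$\kappa$ terms}. Testing $\eqref{1.7}_2$ with $u$, integrating by parts (the pressure drops by $\mm{div}\,u=0$ and the viscous boundary term drops by the Navier-slip condition \eqref{20220202081737}), and substituting $\kappa u_3=-\sigma_t-u\cdot\nabla\sigma$ from $\eqref{1.7}_1$ into the capillary term $-\kappa\int_\Omega u_3\Delta\sigma\,\dx$ turns it into $\int_\Omega\sigma_t\Delta\sigma\,\dx$ plus a transport term that \emph{exactly} cancels the remaining $-\int_\Omega(\nabla\sigma\cdot u)\Delta\sigma\,\dx$; since $\sigma|_{\partial\Omega}=0$ (so $\sigma_t|_{\partial\Omega}=0$), one obtains the clean identity $\tfrac{d}{dt}(\tfrac12\int_\Omega\rho|u|^2\,\dx+\tfrac12\|\nabla\sigma\|_0^2)+\mu\|\nabla u\|_0^2=0$. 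Thus the dangerous $O(\kappa)$ forcing never appears at the base level, and the same mechanism, applied after differentiating \eqref{1.7} by the \emph{horizontal} operators $\nabla_{\mm{h}}$ (which commute with the boundary conditions), propagates the cancellation to higher tangential order and produces the $\kappa$-weighted energy terms $\|\kappa(\Delta_{\mm{h}}\sigma,\nabla_{\mm{h}}\partial_3\sigma)\|_0^2$ and $\|\kappa u_3\|_1^2$ in $E(t)$.

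The main obstacle is that the transport equation $\eqref{1.7}_1$ supplies \emph{no} direct dissipation for $\sigma$, so the $\|\sigma\|_{1,2}^2$ contribution to $\mathcal{D}(t)$ must be manufactured from the coupling. Differentiating $\eqref{1.7}_1$ in $t$ and eliminating $u_{3,t}$ with $\eqref{1.7}_2$ reveals, at the linearized level, a damped-wave structure $\sigma_{tt}-\tfrac{\mu}{\bar{\rho}}\Delta\sigma_t-\tfrac{\kappa^2}{\bar{\rho}}\Delta\sigma=\cdots$ hidden in \eqref{1.7}; I would realize the corresponding dissipation by adding to the tangential energy a small multiple of an \emph{interactive (cross) functional} built from the pairing of the two $O(\kappa)$ coupling terms (schematically $\tfrac{d}{dt}\int_\Omega \kappa u_3\,\Delta_{\mm{h}}\sigma\,\dx$ and its higher-order analogues), whose time derivative returns the restoring term $\kappa^2\|\nabla_{\mm{h}}\,\cdot\|^2$ modulo viscous and nonlinear remainders, thereby extracting $\|\sigma\|_{1,2}^2$. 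The remaining \emph{normal} derivatives of $\sigma$ (such as $\partial_3^3\sigma$) and the top-order derivatives of $u$ are then recovered algebraically/elliptically: the third component of $\eqref{1.7}_2$ expresses $\kappa\Delta\sigma$ through $\mu\Delta u_3$, $\rho(u_t+u\cdot\nabla u)_3$, $\partial_3\beta$ and lower-order terms, so $\partial_3^2\sigma=\Delta\sigma-\Delta_{\mm{h}}\sigma$ is controlled with a favourable $\kappa^{-1}$ factor, while $u$ is estimated from a Stokes problem with the Navier-slip conditions. The time derivatives $\sigma_t$, $u_t$ are handled by differentiating \eqref{1.7} in $t$ and repeating the energy step.

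It remains to bound the pressure and close. Taking the divergence of $\eqref{1.7}_2$ gives an elliptic problem $\Delta\beta=\mm{div}(\cdots)$ yielding $\|\nabla\beta\|_0$, and the same for the shifted pressure $\mathcal{P}=\beta+\kappa\partial_3\sigma$, which is the genuinely controllable combination and produces \eqref{202412061425} and, after time integration of its $H^1$ bound, \eqref{1sdaf2}. Collecting the previous steps, every nonlinear contribution is estimated by $\sqrt{\mathcal{E}(t)}\,\mathcal{D}(t)$ up to Sobolev/interpolation constants whose worst $\kappa$-dependence is absorbed into the bookkeeping power $\chi^9$; this gives a differential inequality $\tfrac{d}{dt}\widetilde{E}(t)+\mathcal{D}(t)\lesssim\sqrt{\mathcal{E}(t)}\,\mathcal{D}(t)$ for some $\widetilde{E}\simeq\mathcal{E}$ incorporating the interactive functional. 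Under \eqref{2022412051007} one has $\mathcal{E}<1$, so $\sqrt{\mathcal{E}}$ is small and the right-hand side is absorbed, yielding $\mathcal{E}(t)+\int_0^t\mathcal{D}(\tau)\,d\tau\le 2c_1\chi^9E^0<1$, i.e.\ \eqref{1.12}. Finally, because this bound \emph{strictly improves} the a priori assumption under the threshold $E^0\le(3c_1\chi^9)^{-3}$, a standard continuation argument forbids finite-time breakdown and upgrades the local solution to the global one. The most delicate point throughout is keeping all constants \emph{independent of $\kappa$ for large $\kappa$}: the $O(\kappa)$ and $O(\kappa^2)$ terms must be confined to the cancellation identity and to the $\kappa$-weighted norms in $E$ and $\mathcal{D}$, and must never leak into the nonlinear remainders.
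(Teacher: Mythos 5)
Your overall architecture (local existence by iteration, zero--order cancellation of the $O(\kappa)$ coupling, higher--order energies plus elliptic recovery, continuation) matches the paper's, and the zero--order identity you describe is exactly Lemma \ref{lem2.2}. But there is a genuine gap at the heart of the closing step: you assert that ``every nonlinear contribution is estimated by $\sqrt{\mathcal{E}(t)}\,\mathcal{D}(t)$'', and this fails for the top--order transport commutator. When you estimate $\|\nabla\Delta\sigma\|_0^2$ you inevitably meet $\int(\partial_3^3\sigma)^2\partial_3u_3\,\mm{d}x$, and since the dissipation $\mathcal{D}$ contains no pure normal derivatives of $\sigma$ (only $\|\sigma\|_{1,2}$, which always carries at least one horizontal derivative, and $\|\sigma_t\|_2$), the best generic bound is $\|\partial_3^3\sigma\|_0^2\,\|\partial_3u_3\|_{L^\infty}\lesssim E(t)\sqrt{\mathcal{D}(t)}$, which is not time--integrable and cannot be absorbed into your differential inequality. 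This is precisely the obstruction the paper is organized around: in the periodic setting Jiang--Zhang--Zhang close it through decay of $\nabla_{\mm{h}}u$, which is unavailable on the slab $\mathbb{R}^2\times(0,h)$; here the authors instead substitute $\partial_3 u_3=-\kappa^{-1}\partial_3(\sigma_t+u\cdot\nabla\sigma)$ from the transport equation \emph{twice}, converting the bad term into a total time derivative of $\int\bigl(\kappa^{-1}-\tfrac{7}{2}\kappa^{-2}\partial_3\sigma\bigr)\partial_3\sigma(\partial_3^3\sigma)^2\,\mm{d}x$ that is absorbed into the energy functional, plus remainders controlled with the anisotropic interpolation inequality \eqref{lemma4.7.2}; see \eqref{202412171632}--\eqref{2.29}. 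Without this device (or an equivalent one) the a priori estimate \eqref{1.12} does not close.

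Two smaller inaccuracies. First, the $\kappa$--weighted quantities $\|\kappa(\Delta_{\mm{h}}\sigma,\nabla_{\mm{h}}\partial_3\sigma)\|_0$ and $\|\kappa u_3\|_1$ are not produced by tangential energy identities: differentiating by $\nabla_{\mm{h}}$ reproduces the same cancellation and yields unweighted norms. In the paper they are read off algebraically from the momentum equation rewritten as \eqref{3.3} and from the transport equation, respectively (Lemmas \ref{lem2.5sd} and \ref{lem2.7}). Second, the dissipation $\|\sigma\|_{1,2}^2$ is extracted not from a wave--type cross functional but from the vorticity equation \eqref{2.8}, which bounds $\kappa\|\nabla^{\bot}\Delta\sigma\|_0$ by quantities already present in $\mathcal{D}$ and hence gives \eqref{2.43} with a favourable factor $\kappa^{-1}$. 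Your cross--functional route is not obviously wrong, but it is unverified and is not what makes the uniform--in--$\kappa$ bookkeeping work here.
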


The existence of strong solutions with small initial perturbations to the initial-boundary value problem \eqref{1.2}--\eqref{20220202081737} in a horizontally periodic domain $\Omega_{\mm{p}}:=\mathbb{T}^2\times (0,h)$ has been established by Jiang--Zhang--Zhang, where $\mathbb{T}:=\mathbb{R}/\mathbb{Z}$. Let us first roughly recall some key ideas in Jiang--Zhang--Zhang's proof in \cite{JZZa2024}.

The standing point of  Jiang--Zhang--Zhang's proof is the basic energy identity in differential version for 
the problem \eqref{1.2}--\eqref{20220202081737}:
\begin{align}\label{2.sadf16}
\frac{1}{2}\frac{\mm{d}}{\mm{d}t}(\|\kappa \nabla \varrho\|_{L^2(\Omega_{\mm{p}})}^2+\|\sqrt{\rho}u\|_{L^2(\Omega_{\mm{p}})}^2)+\mu\|\nabla u\|_{L^2(\Omega_{\mm{p}})}^2=0.
\end{align}  
We call \eqref{2.sadf16} the  zero-order energy estimate of $(\varrho,u)$. 
Then they further derived  energy estimates for the high-order spacial derivatives and the temporal derivatives of $(\varrho,u)$. However, the integrals related to the nonlinear terms in the system \eqref{1.2} appear in the high-order energy estimates. We call such integrals the nonlinear integrals for simplicity. In particular, there exist a troublesome nonlinear integral $\int(\partial_3^3\varrho)^2\partial_3u_3\mm{d}x$, which is equal to $-\int(\partial_3^3\varrho)^2 \mm{div}_{\mm{h}}u_{\mm{h}} \mm{d}x$ due to the incompressible condition \eqref{1.2}$_3$.  Jiang--Zhang--Zhang naturally expect that  $\nabla_{\mm{h}}u$ enjoy a fine decay-in-time, which contributes to close the troublesome nonlinear integral.
However one can not directly derive the decay-in-time of $\nabla_{\mm{h}}u$ from high-order energy estimates due to the absence of the dissipation of $\nabla_{\mm{h}}\varrho$. Fortunately, they can capture the dissipation of  $\nabla_{\mm{h}}\varrho$ from the vortex equations, which can be obtained by applying $\mm{curl}$ to \eqref{1.2}$_2$.  The dissipation of  $(\nabla_{\mm{h}}\varrho,\nabla_{\mm{h}}u)$, together with the horizontal periodicity of $\Omega_{\mm{p}}$,  results in the decay-in-time of $\nabla_{\mm{h}}\varrho$ and $\nabla_{\mm{h}}u$ by energy method with extremely fine estimates. 

Following Jiang--Zhang--Zhang's  ideas in \cite{JZZa2024} for the initial-boundary vaule problem \eqref{20220202081737}--\eqref{1.8x}, however it seems to be difficult to establish the desired decay-in-time of $\nabla_{\mm{h}}u$   due to the unboundedness of the fluid domain $\Omega$. This results in that we shall develop a new alternative method to estimate for the troublesome  nonlinear integral $\int(\partial_3^3\sigma)^2\partial_3u_3\mm{d}x$  (recalling the linear relation $\sigma=\kappa\varrho$). More precisely,  we use the transport equation \eqref{1.7}$_1$ twice and the anisotropic Gagliardo--Nirenberg--Sobolev type estimate in \ref{lemma4.7.2}  to estimate $\int(\partial_3^3\sigma)^2\partial_3u_3\mm{d}x$,  see \eqref{2.29} for the detailed derivation. Based on this new idea, we can refine the energy method in \cite{JZZa2024} to establish Theorem \ref{thm2}.  

It is easy see from \eqref{2022412082055} that  $\chi$ for $\kappa  \geqslant 1$ reduces to 
$\chi= c_2$. Consequently, we can make use of the uniform-in-$\kappa  $  estimates in \eqref{1.12}--\eqref{1sdaf2} with $\kappa  \geqslant 1$
to establish the asymptotical behavior with respect to capillarity number at infinity. More precisely, for any given $\kappa  >0$, we choose initial data $\sigma^0_\kappa  $ and $u^0_\kappa  $, which satisfy 
 \eqref{202412051008} and  \eqref{2022412051007} with $(\sigma^0_\kappa  ,u^0_\kappa  ) $ in place of $(\sigma^0 ,u^0 )$. In view of Theorem \ref{thm2}, the  initial-boundary value problem \eqref{1.2}--\eqref{20220202081737} with $(\varrho^0_\kappa  ,u^0_\kappa  ) $ in place of $(\varrho^0 ,u^0 )$ admits a global strong solution denoted by $(\varrho^\kappa   ,u^\kappa  )$, where we have defined that $\varrho^0_\kappa :=\kappa^{-1}\sigma^0_\kappa $. Moreover  the solutions have the following asymptotical behavior with respect to
$\kappa  $ at infinity
\begin{thm}\label{thm3}
Let $p \geqslant 1$. We additionally assume that $(u^0_\kappa)_{\mm{h}}\to w^0_{\mm{h}}$ in $H^2$ as $\kappa\to \infty$. There exist functions ${\tilde{u}}$, $\mathcal{Q}$, $\mathcal{M} $ and 
$\mathcal{N}_3$ such that, for any given $T>0$,
\begin{align} 
&u^\kappa  \rightarrow{\tilde{u}}\mbox{ weakly* in }L^{\infty}(I_T, {\mathcal{H}^2_{\mathrm{s}}})\mbox{ and weakly in }L^2(I_T, H^3)\mbox{ wit }\tilde{u}_3=0,\label{1.16}\\
&u^\kappa  \rightarrow{\tilde{u}}\mbox{ strongly in }L^{p}(I_T,H^2_{\mm{loc}})\cap C^0(\overline{I_T},H^1_{\mm{loc}}),\label{3.sdf9} \\
&u_t^\kappa  \rightarrow \tilde{u}_t\mbox{ weakly* in } L^{\infty}(I_T,L^{2})\mbox{ and weakly in } L^2(I_T,{\mathcal{H}^1}), \\
&(\varrho^\kappa , \varrho^\kappa _t)\to (0,0), \mbox{ strongly in }L^{\infty}(I_T,H^3)\times  L^2(I_T,H^2),\\
&\nabla\mathcal{P}^\kappa  \rightarrow \nabla\mathcal{Q}\mbox{ weakly* in } L^{\infty}(I_T,L^{2})\mbox{ and weakly in } L^2(I_T,H^1),\\
&\kappa  ((\nabla_{\mm{h}}\partial_3\sigma^\kappa)^{\top}  ,-\Delta_{\rm h} \sigma^\kappa  )^\top\rightarrow (\nabla_{\mm{h}}\mathcal{M}^{\top} ,\mathcal{N}_3)^\top\nonumber \\
&\mbox{weakly* in } L^{\infty}(I_T,L^2)\mbox{ and weakly in }L^2( I_T, H^1_0),\label{3asd.6}
\end{align} as $\kappa  \rightarrow\infty$. 
Moreover,  ${\tilde{u}}_{\mm{h}}$, $\mathcal{Q}$, $\mathcal{M} $  and 
$\mathcal{N}_3$ satisfy
\begin{align}\label{1.13}
&\begin{cases}
\bar{\rho}(\partial_t{\tilde{u}}_{\mm{h}}+{\tilde{u}}_{\mm{h}}\cdot\nabla_{\mm{h}} {\tilde{u}}_{\mm{h}})+\nabla_{\rm h}(\mathcal{Q}-\mathcal{M})-\mu\Delta_{\mm{h}}{\tilde{u}}_{\mm{h}}
=\mu\partial_3^2{\tilde{u}}_{\mm{h}},\\
\mm{div}_{\mm{h}}{\tilde{u}}_{\mm{h}}=0,\\
{\tilde{u}}_{\mm{h}}|_{t=0}=w_{\mm{h}}^0 \mbox{ with }\mm{div}_{\mm{h}}w_{\mm{h}}^0=0 
\end{cases}
\end{align}
and 
\begin{align}\label{1z13}
\begin{cases}
- \Delta \mathcal{Q}=
\bar{\rho}\nabla {\tilde{u}}_{\mm{h}}:\nabla_{\mm{h}}\tilde{u}_{\mm{h}},\ \partial_3\mathcal{Q}|_{\partial\Omega}=0,\\
-\Delta_{\mm{h}} \mathcal{M}=\partial_3^2\mathcal{Q},\\
\mathcal{N}_3 =\partial_3\mathcal{Q} .
\end{cases}
\end{align}
\end{thm}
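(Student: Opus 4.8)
\emph{Strategy.} The whole statement follows from a single compactness-and-limit argument built on the $\kappa$-uniform estimates of Theorem~\ref{thm2}. Since $\chi=c_2$ once $\kappa\geqslant 1$, the bounds \eqref{1.12}--\eqref{1sdaf2} are uniform in $\kappa$ on each finite $I_T$. Thus $\{u^\kappa\}$ is bounded in $L^\infty(I_T,\mathcal{H}^2_{\mm{s}})\cap L^2(I_T,H^3)$, $\{u^\kappa_t\}$ in $L^\infty(I_T,L^2)\cap L^2(I_T,H^1)$, $\{\nabla\mathcal{P}^\kappa\}$ in $L^\infty(I_T,L^2)\cap L^2(I_T,H^1)$, and the scaled capillary field $\kappa((\nabla_{\mm{h}}\partial_3\sigma^\kappa)^\top,-\Delta_{\mm{h}}\sigma^\kappa)^\top$ in the same space. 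By Banach--Alaoglu I extract a single subsequence along which all these converge weakly-$*$ in $L^\infty(I_T,\cdot)$ and weakly in $L^2(I_T,\cdot)$, producing $\tilde u$, $\tilde u_t$, $\nabla\mathcal{Q}$, and the limit of the capillary field. Because $\kappa\nabla_{\mm{h}}\partial_3\sigma^\kappa=\nabla_{\mm{h}}(\kappa\partial_3\sigma^\kappa)$ and horizontal gradients form a weakly closed subspace, that limit is again a horizontal gradient $\nabla_{\mm{h}}\mathcal{M}$, while $\Delta_{\mm{h}}\sigma^\kappa|_{\partial\Omega}=0$ places the limit $\mathcal{N}_3$ of $-\kappa\Delta_{\mm{h}}\sigma^\kappa$ in $L^2(I_T,H^1_0)$; this gives \eqref{1.16} and \eqref{3asd.6}.

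\emph{Structural limits and strong compactness.} The weights in $E(t)$ do the structural work. From $\|\kappa u_3^\kappa\|_1^2\leqslant E(t)$ we get $\|u_3^\kappa\|_1=O(\kappa^{-1})$, whence $\tilde u_3=0$ and, with $\mm{div}\,u^\kappa=0$, $\mm{div}_{\mm{h}}\tilde u_{\mm{h}}=0$; moreover $\varrho^\kappa=\kappa^{-1}\sigma^\kappa$ and $\varrho^\kappa_t=\kappa^{-1}\sigma^\kappa_t$ tend to $0$ at rate $\kappa^{-1}$ in $L^\infty(I_T,H^3)$ and $L^2(I_T,H^2)$. Next, since $\|\kappa\nabla_{\mm{h}}\partial_3\sigma^\kappa\|_0=O(1)$ and $\sigma^\kappa|_{\partial\Omega}=0$ (so $\nabla_{\mm{h}}\sigma^\kappa$ vanishes on $\partial\Omega$), a vertical Poincar\'e inequality gives $\|\nabla_{\mm{h}}\sigma^\kappa\|_0\lesssim\|\nabla_{\mm{h}}\partial_3\sigma^\kappa\|_0=O(\kappa^{-1})\to0$. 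Hence the weak $L^2$-limit $\bar\sigma$ of $\sigma^\kappa$ satisfies $\nabla_{\mm{h}}\bar\sigma=0$; being independent of $x_{\mm{h}}$ and lying in $L^2(\Omega)$ over the horizontally unbounded slab forces $\bar\sigma=0$, i.e.\ $\sigma^\kappa\rightharpoonup0$. To handle the nonlinearity on the unbounded $\Omega$ I localize: on any bounded $\Omega'\subset\subset\Omega$ the compact embedding $H^3(\Omega')\hookrightarrow\hookrightarrow H^2(\Omega')$, the $L^2(I_T,H^1)$-bound on the time derivatives, and the Aubin--Lions--Simon lemma yield $u^\kappa\to\tilde u$ in $L^2(I_T,H^2(\Omega'))\cap C^0(\overline{I_T},H^1(\Omega'))$ and $\sigma^\kappa\to0$ in $L^2(I_T,H^2(\Omega'))$; interpolating against the uniform $L^\infty(I_T,H^2)$ bound upgrades the velocity convergence to $L^p(I_T,H^2_{\mm{loc}})$ for all $p\geqslant1$, giving \eqref{3.sdf9}.

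\emph{Passing to the limit.} Writing $\beta^\kappa=\mathcal{P}^\kappa-\kappa\partial_3\sigma^\kappa$ and $\Delta=\Delta_{\mm{h}}+\partial_3^2$, the momentum equation \eqref{1.7}$_2$ becomes
\[
\rho^\kappa(u^\kappa_t+u^\kappa\cdot\nabla u^\kappa)+\nabla\mathcal{P}^\kappa=\mu\Delta u^\kappa+\big(\kappa\nabla_{\mm{h}}\partial_3\sigma^\kappa,\,-\kappa\Delta_{\mm{h}}\sigma^\kappa\big)^\top-\Delta\sigma^\kappa\nabla\sigma^\kappa .
\]
As $\kappa\to\infty$: $\rho^\kappa=\bar\rho+\kappa^{-1}\sigma^\kappa\to\bar\rho$; the convection term converges to $\tilde u\cdot\nabla\tilde u=\tilde u_{\mm{h}}\cdot\nabla_{\mm{h}}\tilde u$ by strong local convergence and $\tilde u_3=0$; the bracketed field tends weakly to $(\nabla_{\mm{h}}\mathcal{M},\mathcal{N}_3)^\top$; and $\Delta\sigma^\kappa\nabla\sigma^\kappa\to0$ locally because $\sigma^\kappa\to0$ in $H^2_{\mm{loc}}$. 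The horizontal components give exactly \eqref{1.13}$_1$ with effective pressure $\mathcal{Q}-\mathcal{M}$, which together with $\mm{div}_{\mm{h}}\tilde u_{\mm{h}}=0$ and the matching of initial data $(u^0_\kappa)_{\mm{h}}\to w^0_{\mm{h}}$ through the $C^0(\overline{I_T},H^1_{\mm{loc}})$ convergence yields \eqref{1.13}. For \eqref{1z13}, passing to the limit in the identity $\partial_3(\kappa\Delta_{\mm{h}}\sigma^\kappa)=\Delta_{\mm{h}}(\kappa\partial_3\sigma^\kappa)$ gives $\partial_3\mathcal{N}_3=-\Delta_{\mm{h}}\mathcal{M}$; taking the divergence of the limiting momentum equation and using $\mm{div}\,\tilde u=0$, $\tilde u_3=0$ and this relation annihilates the $\mathcal{M},\mathcal{N}_3$ contribution and produces $-\Delta\mathcal{Q}=\bar\rho\nabla\tilde u_{\mm{h}}:\nabla_{\mm{h}}\tilde u_{\mm{h}}$; the vertical component of the same equation (whose inertial and viscous parts vanish since $\tilde u_3=0$) gives $\mathcal{N}_3=\partial_3\mathcal{Q}$, hence $\partial_3\mathcal{Q}|_{\partial\Omega}=0$ because $\mathcal{N}_3\in H^1_0$, and substituting back yields $-\Delta_{\mm{h}}\mathcal{M}=\partial_3^2\mathcal{Q}$.

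\emph{From a subsequence to the full limit, and the main difficulty.} The limit system \eqref{1.13}--\eqref{1z13} is a two-dimensional Navier--Stokes-type problem for $\tilde u_{\mm{h}}$ in which the elliptic problems determine $\mathcal{Q},\mathcal{M},\mathcal{N}_3$ from $\tilde u_{\mm{h}}$; at the regularity level $\tilde u_{\mm{h}}\in L^\infty(I_T,H^2)\cap L^2(I_T,H^3)$ a standard energy/Gr\"onwall estimate on the difference of two solutions gives uniqueness, so the limit is independent of the extracted subsequence and the whole family converges, upgrading the subsequential statements to \eqref{1.16}--\eqref{3asd.6}. I expect the two genuinely delicate points to be, first, recovering compactness for the nonlinearity $u^\kappa\cdot\nabla u^\kappa$ on the unbounded slab, which forces all strong convergences into $H^k_{\mm{loc}}$ and requires the interpolation step for the $L^p$-in-time statement; and second, showing that the $O(1)$ lower-order capillary force $\Delta\sigma^\kappa\nabla\sigma^\kappa$ leaves no trace in the limit. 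The latter rests on $\sigma^\kappa\rightharpoonup0$, which in turn uses the vertical Poincar\'e inequality (hence the Navier boundary condition and $\sigma^\kappa|_{\partial\Omega}=0$) and the precise $\kappa$-weights in $E(t)$; this is the step where the horizontally unbounded geometry of the slab is essential.
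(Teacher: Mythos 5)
Your proposal is correct and follows essentially the same route as the paper: uniform-in-$\kappa$ bounds from Theorem \ref{thm2}, Banach--Alaoglu plus Aubin--Lions/Arzel\'a--Ascoli on bounded subdomains, identification of the limit capillary field as a (horizontal) gradient by testing against divergence-free fields, passage to the limit in the momentum equation rewritten with $\nabla\mathcal{P}^\kappa$ and $\kappa((\nabla_{\mm{h}}\partial_3\sigma^\kappa)^\top,-\Delta_{\mm{h}}\sigma^\kappa)^\top$, and uniqueness of the limit system to upgrade from subsequences. The only cosmetic difference is how you show the limit of $\sigma^\kappa$ vanishes (vertical Poincar\'e on $\nabla_{\mm{h}}\sigma^\kappa$ rather than the paper's $\Delta_{\mm{h}}\tilde{\varpi}=0$ plus regularity), which is an equally valid variant of the same step.
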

\begin{rem}
Modifying the proofs of Theorems \ref{thm2} and \ref{thm3} by further using the energy method with time-weight in \cite{JZZa2024}, we also obtain the correspond results for the case of the horizontally periodic domain with finite height, i.e. $2\pi L_1\mathbb{T}\times 2\pi L_2\mathbb{T}\times (0,h) $. In addition,
since $\chi\to \infty$ for $\kappa  \to 0$, we can not except the vanishing capillarity limit from Theorem \ref{thm2}. 
\end{rem} 
 
We will prove  the  asymptotical behavior in Theorem \ref{thm3} by exploiting a compactness argument, the details of which will be presented in Section \ref{subsec:04}.

\section{Proof of Theorem \ref{thm2}}\label{subsec:02}
This section is devoted to the proof of the  global(-in-time) solvability with uniform-in-$\kappa  $ estimates for the initial-boundary value problem \eqref{20220202081737}--\eqref{1.8x}. The key point is to \emph{a priori} derive the uniform-in-$\kappa  $ estimate in \eqref{1.12}. For this purpose, let $T>0$ be a fixed time and $(\sigma,u)$ a solution to \eqref{20220202081737}--\eqref{1.8x} on $\Omega\times I_T$ with initial data $(\sigma^0,u^0)\in { {H}}^3_{\bar{\rho}}\times {\mathcal{H}^2_{\mathrm{s}}}$. Moreover, we assume that $\sigma^0$ satisfies \eqref{202412051008} and  the solution $(\sigma,u)$ is sufficiently regular so that the procedure of formal deduction makes sense.

\subsection{Basic estimates}\label{Ene}

This section is devoted to deriving some basic estimates  of $(\sigma,u)$ from the initial-boundary value problem  \eqref{20220202081737}--\eqref{1.8x}. Let us first recall the energy identity
of $(\sigma,u)$.
\begin{lem}\label{lem2.2}
It holds that
\begin{align}\label{2.16}
\frac{1}{2}\frac{\mm{d}}{\mm{d}t}(\| \nabla \sigma\|_0^2+\|\sqrt{\rho}u\|_0^2)+\mu\|\nabla u\|_0^2=0,
\end{align} 
where $\rho=\bar{\rho}+\sigma/\kappa$ and $\bar{\rho}$ is defined by \eqref{202501232128} with $a=b=1$ in \eqref{202501232saf128}.
\end{lem}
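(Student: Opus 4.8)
The plan is to carry out the standard $L^2$ energy estimate: test the momentum equation \eqref{1.7}$_2$ with $u$, integrate over $\Omega$, and exploit the transport equation \eqref{1.7}$_1$ to convert the capillary force into an exact time derivative. First I would record the continuity equation satisfied by $\rho$. Dividing \eqref{1.7}$_1$ by $\kappa$ and using $\rho=\bar{\rho}+\sigma/\kappa$ together with $\bar{\rho}=x_3+1$, so that $u\cdot\nabla\bar{\rho}=u_3$ and $\bar{\rho}_t=0$, gives $\rho_t+u\cdot\nabla\rho=0$; combined with $\div u=0$ this yields the conservative form $\rho_t+\div(\rho u)=0$. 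This is the device that turns the material-derivative term into an exact time derivative: integrating by parts with $u\cdot\mf{n}=0$ on $\partial\Omega$ produces $\int_\Omega\rho(u_t+u\cdot\nabla u)\cdot u\,\dx=\frac{1}{2}\frac{\mm{d}}{\mm{d}t}\|\sqrt{\rho}u\|_0^2$.

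Next I would dispatch the three force terms. The pressure term vanishes, $\int_\Omega\nabla\beta\cdot u\,\dx=-\int_\Omega\beta\div u\,\dx=0$, using $\div u=0$ and $u\cdot\mf{n}=0$. For the viscous term, integration by parts gives $-\mu\|\nabla u\|_0^2$ plus a boundary integral $\mu\int_{\partial\Omega}(\partial_{\mf{n}}u)\cdot u\,\mm{d}S$; since $\mf{n}=\pm\mf{e}^3$ on the slab boundary, this reduces to $\pm\mu\int_{\partial\Omega}(\partial_3u_1\,u_1+\partial_3u_2\,u_2+\partial_3u_3\,u_3)\,\mm{d}S$, which vanishes by the Navier-slip conditions \eqref{20220202081737}, namely $\partial_3u_1=\partial_3u_2=u_3=0$ on $\partial\Omega$. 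The crucial term is the capillary force $\int_\Omega\Delta\sigma(\kappa\mf{e}^3+\nabla\sigma)\cdot u\,\dx=\int_\Omega\Delta\sigma(\kappa u_3+\nabla\sigma\cdot u)\,\dx$. Here I would substitute $\kappa u_3=-\sigma_t-u\cdot\nabla\sigma$ from \eqref{1.7}$_1$; the two terms quadratic in $(\sigma,u)$ then cancel exactly, leaving $-\int_\Omega\Delta\sigma\,\sigma_t\,\dx$, and a further integration by parts gives $\frac{1}{2}\frac{\mm{d}}{\mm{d}t}\|\nabla\sigma\|_0^2$ up to a boundary integral $\int_{\partial\Omega}\partial_{\mf{n}}\sigma\,\sigma_t\,\mm{d}S$.

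The point that needs care is the vanishing of this last boundary integral, and this is where the definition of the space $H^3_{\bar{\rho}}$ enters. Although $\partial_3\sigma|_{\partial\Omega}=-\kappa\bar{\rho}'|_{\partial\Omega}\neq0$, one has $\sigma(t)|_{\partial\Omega}=0$ for every $t$ because $\sigma(t)\in H^3_{\bar{\rho}}\subset H^3_0$; differentiating in time gives $\sigma_t|_{\partial\Omega}=0$, so the integral drops. Equivalently, I would check directly that $\sigma_t|_{\partial\Omega}=-u\cdot\nabla\sigma-\kappa u_3=0$ on $\partial\Omega$, since $u_3|_{\partial\Omega}=0$ and the horizontal derivatives of $\sigma$ vanish on the flat boundary where $\sigma\equiv0$.

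Collecting the four contributions, I obtain $\frac{1}{2}\frac{\mm{d}}{\mm{d}t}\|\sqrt{\rho}u\|_0^2+\mu\|\nabla u\|_0^2+\frac{1}{2}\frac{\mm{d}}{\mm{d}t}\|\nabla\sigma\|_0^2=0$, which is exactly \eqref{2.16}. The main obstacle is the capillary term: one must recognize that the transport equation \eqref{1.7}$_1$ is the right tool for rewriting $\kappa u_3$, both to force the exact cancellation of the nonlinear pieces and to generate $\frac{1}{2}\frac{\mm{d}}{\mm{d}t}\|\nabla\sigma\|_0^2$, and then to verify that the boundary data encoded in $H^3_{\bar{\rho}}$ are compatible so that no boundary contribution survives in either the viscous or the capillary integration by parts.
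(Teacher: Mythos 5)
Your proof is correct and follows essentially the same route as the paper: both arguments hinge on using the transport equation \eqref{1.7}$_1$ to convert the capillary integral $\int\Delta\sigma(\kappa u_3+\nabla\sigma\cdot u)\,\mm{d}x$ into $-\int\Delta\sigma\,\sigma_t\,\mm{d}x=\frac{1}{2}\frac{\mm{d}}{\mm{d}t}\|\nabla\sigma\|_0^2$ after the nonlinear pieces cancel, with the boundary terms killed by $\sigma|_{\partial\Omega}=0$ (Lemma \ref{lem2.1}) and the Navier-slip conditions; whether one substitutes $\kappa u_3=-\sigma_t-u\cdot\nabla\sigma$ inside the capillary term, as you do, or tests \eqref{1.7}$_1$ with $-\Delta\sigma$ and adds the two identities, as the paper does, is only a cosmetic reorganization. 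Your explicit attention to the nonzero trace $\partial_3\sigma|_{\partial\Omega}=-\kappa$ and why the boundary integral nonetheless vanishes is a correct and welcome clarification of a point the paper leaves implicit.
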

\begin{proof}
 Taking the inner product of \eqref{1.7}$_1$ and $-\Delta \sigma$ in $L^2$, and then exploiting the integration by parts and the  boundary condition of $\sigma$ in \eqref{2.1}, we have
\begin{align*}
\frac{1}{2}\frac{\mm{d}}{\mm{d}t}\int| \nabla \sigma|^2\mm{d}x=\int\Delta \sigma(\kappa   u_3+   u\cdot\nabla \sigma)\mm{d}x.
\end{align*}

Similarly, taking the inner product of \eqref{1.7}$_2$ and $u$ in $L^2$, and then making using the integration by parts, the mass equation $\eqref{1.1}_1$, the boundary condition \eqref{20220202081737} and the incompressible condition \eqref{1.7}$_3$, we obtain
\begin{align*}
&\frac{1}{2}\frac{\mm{d}}{\mm{d}t}\int\rho|u|^2\mm{d}x+\mu\int|\nabla u|^2\mm{d}x\nonumber\\
=&\frac{1}{2}\int\rho_t|u|^2\mm{d}x-\int(\Delta{\sigma}(\kappa\mathbf{e}^3+ \nabla{\sigma}) + {\rho}u\cdot\nabla u)\cdot u \mm{d}x\nonumber\\
=&-\int\Delta{\sigma}(\kappa u_3+   \nabla{\sigma}\cdot u)\mm{d}x.
\end{align*}
Summing up the above two identities yields \eqref{2.16}.
\end{proof}

Next we further extend \eqref{2.16} to both the cases satisfied by the highest-order spacial derivatives and the temporal derivatives of $(\sigma,u)$ resp.. 
\begin{lem}\label{lem2.5}
	It holds that
	\begin{align}\label{2.17}
	&\frac{1}{2}\frac{\mm{d}}{\mm{d}t}\left(\| \nabla\Delta \sigma\|_0^2+\|\sqrt{\rho}\Delta u\|^2_{0}
-3 \int\left(\kappa ^{-1}-\frac{7}{2\kappa ^2}\partial_3\sigma \right)\partial_3\sigma(\partial_3^3\sigma)^2\mm{d}x
\right)
	+\mu \|\nabla\Delta u\|^2_{0}\nonumber\\
&    \lesssim   (1+\kappa  ^{-1}\| \sigma\|_3)\|\nabla u\|_1
(\|u_t\|_1+ \|u\|_2\|\nabla u\|_2)
+\| \sigma\|_{1,2}(\| \sigma\|_3+\kappa  ^{-2}\| \sigma\|_3^3)\|\nabla u\|_2 
	\end{align}	
and
	\begin{align}\label{2.42}
	&\frac{\mm{d}}{\mm{d}t}(\| \nabla \sigma_t\|_0^2+\|\sqrt{{\rho}}u_t\|^2_{0})+\mu\|\nabla u_t\|^2_{0}\nonumber\\
	&\lesssim\| \sigma_t\|_2\left(\| \sigma_t\|_1\|u\|_2+(\| \sigma\|_3+ \kappa  ^{-1} \|u\|_2^2 )
\|u_t\|_0\right)\nonumber\\
&\quad +(\kappa^{-1}\|\sigma_t\|_1+(1+\kappa  ^{-1}\| \sigma\|_2)\|u\|_2)\|u_t\|_0 \|u_t\|_1 .
\end{align}  
\end{lem}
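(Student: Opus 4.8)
The plan is to establish both identities by the energy template already used for Lemma~\ref{lem2.2}: differentiate the system \eqref{1.7} at the appropriate order, pair the differentiated equations against a matching multiplier, and exploit two structural cancellations that are built into \eqref{1.7} — the linear capillary coupling between the mass and momentum equations, and the vanishing of the pressure contribution under $\mm{div}\,u=0$ — so that only the viscous dissipation and controllable nonlinear integrals survive. Throughout, every boundary integral produced by integration by parts vanishes because the conditions defining $H^3_{\bar{\rho}}$ (namely $\sigma=\partial_3^2\sigma=0$ and $\partial_3\sigma$ constant on $\partial\Omega$) and the slip conditions \eqref{20220202081737} (namely $u_3=\partial_3u_1=\partial_3u_2=0$) are tailored precisely to kill them.

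For \eqref{2.17} I would apply $\Delta$ to the mass equation \eqref{1.7}$_1$ and test it against $-\Delta^2\sigma$ (equivalently apply $\nabla\Delta$ and pair with $\nabla\Delta\sigma$), and simultaneously apply $\Delta$ to the momentum equation \eqref{1.7}$_2$ and pair with $\Delta u$ in $L^2$. The first gives $\tfrac12\tfrac{\mm{d}}{\mm{d}t}\|\nabla\Delta\sigma\|_0^2$ together with the linear term $+\kappa\int\Delta u_3\,\Delta^2\sigma\,\dx$, while the second gives $\tfrac12\tfrac{\mm{d}}{\mm{d}t}\|\sqrt{\rho}\Delta u\|_0^2-\tfrac12\int\rho_t|\Delta u|^2\,\dx$, the dissipation $-\mu\|\nabla\Delta u\|_0^2$, and the opposite linear term $-\kappa\int\Delta^2\sigma\,\Delta u_3\,\dx$ coming from the capillary force $-\kappa\Delta\sigma\,\mf{e}^3$; the two $\kappa$-terms cancel upon summation, and the pressure contribution $\int\nabla\Delta\beta\cdot\Delta u\,\dx$ integrates to zero since $\mm{div}\,\Delta u=\Delta\,\mm{div}\,u=0$. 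What remains are the commutator and nonlinear integrals arising from $\Delta(u\cdot\nabla\sigma)$, from $\rho\,u\cdot\nabla u$, from $\Delta(\Delta\sigma\,\nabla\sigma)$, and from $\rho_t=-u\cdot\nabla\rho$ with $\rho=\bar{\rho}+\sigma/\kappa$; bounding these by Hölder's inequality, the embedding $H^2\hookrightarrow L^\infty$, and the anisotropic Gagliardo--Nirenberg--Sobolev estimate in \ref{lemma4.7.2} produces the product structure on the right-hand side of \eqref{2.17}, with the explicit $\kappa^{-1}$ and $\kappa^{-2}$ weights simply tracking how often the factor $\sigma/\kappa$ is differentiated.

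The genuine obstacle — and the reason \eqref{2.17} carries the cubic correction $-3\int(\kappa^{-1}-\tfrac{7}{2\kappa^2}\partial_3\sigma)\partial_3\sigma(\partial_3^3\sigma)^2\,\dx$ \emph{inside} the time derivative — is the top-order transport commutator. After the pure transport part cancels through $\mm{div}\,u=0$, one is left with an integral of the form $\int(\partial_3^3\sigma)^2\partial_3u_3\,\dx$, in which $\partial_3u_3$ carries neither smallness nor (because $\Omega$ is unbounded) the decay-in-time exploited in \cite{JZZa2024}. Following the strategy announced after Theorem~\ref{thm2}, my plan is to rewrite $\partial_3u_3$ through the mass equation \eqref{1.7}$_1$, $\kappa\partial_3u_3=-\partial_3\sigma_t-\partial_3(u\cdot\nabla\sigma)$, converting the offending integral into $-\kappa^{-1}\int(\partial_3^3\sigma)^2\partial_3\sigma_t\,\dx$ plus a lower-order nonlinear remainder; the first piece is then recognized, after a \emph{second} use of \eqref{1.7}$_1$ to remove the surviving $\sigma_t$ and an integration by parts in $t$, as $\tfrac{\mm{d}}{\mm{d}t}$ of the stated correction, modulo terms estimable again by \ref{lemma4.7.2}. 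This is exactly the computation advertised as \eqref{2.29}, and the numerical coefficients $-3$ and $7/2$ are merely the bookkeeping of the product rule together with the two $\kappa^{-1}$ substitutions. I expect this step to be the crux; the estimates in \ref{lemma4.7.2} are what let one split the vertical and horizontal derivatives so that the remainder closes in terms of $\|\sigma\|_{1,2}$ and $\|\nabla u\|_2$.

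For \eqref{2.42} I would instead differentiate \eqref{1.7} in time, pair $\partial_t$\eqref{1.7}$_1$ with $-\Delta\sigma_t$ and $\partial_t$\eqref{1.7}$_2$ with $u_t$, and add. The same two cancellations occur: the linear terms $\pm\kappa\int\Delta\sigma_t\,u_{3,t}\,\dx$ drop out, and $\int\nabla\beta_t\cdot u_t\,\dx=0$ by incompressibility. This leaves $\tfrac{\mm{d}}{\mm{d}t}(\|\nabla\sigma_t\|_0^2+\|\sqrt{\rho}u_t\|_0^2)+\mu\|\nabla u_t\|_0^2$ on the left, while the remaining contributions — namely $\tfrac12\int\rho_t|u_t|^2\,\dx$, $\int\rho_t(u\cdot\nabla u)\cdot u_t\,\dx$, $\int\rho(u_t\cdot\nabla u+u\cdot\nabla u_t)\cdot u_t\,\dx$, and the differentiated nonlinearities $\int\Delta\sigma_t\,\partial_t(u\cdot\nabla\sigma)\,\dx$ and $\int\partial_t(\Delta\sigma\,\nabla\sigma)\cdot u_t\,\dx$ — are controlled by Hölder's inequality and the Sobolev embeddings to give the right-hand side of \eqref{2.42}. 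Here no top-order transport obstruction appears, since only a single time derivative rather than three vertical spatial derivatives is involved, so this second estimate is the more routine of the two.
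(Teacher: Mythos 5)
Your proposal is correct and follows essentially the same route as the paper: the same energy pairings (the paper applies $\partial_i$ to \eqref{1.7}$_2$ and tests with $-\partial_i\Delta u$ rather than applying $\Delta$, a cosmetic difference), the same cancellation of the $\pm\kappa$ linear terms and of the pressure contribution, and — crucially — the same double use of the transport equation \eqref{1.7}$_1$ combined with an integration by parts in time and the anisotropic estimate \eqref{lemma4.7.2} to absorb $\int(\partial_3^3\sigma)^2\partial_3u_3\,\mm{d}x$ into the time derivative, which is exactly how the correction term with coefficients $-3$ and $7/2$ arises in \eqref{202412171632}--\eqref{2.29}. The treatment of \eqref{2.42} likewise matches the paper's proof.
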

 \begin{proof}
  (1) Let $1\leqslant i\leqslant 3$. Applying $\Delta$ and $\partial_i$ to \eqref{1.7}$_1$ and \eqref{1.7}$_2$ resp., we obtain
	\begin{align}\label{2.18}
	\begin{cases}
	\Delta(\sigma_t+u\cdot\nabla \sigma+ {\kappa  } u_3)=0,\\
		\rho (\partial_i u_{t}+ u\cdot\nabla \partial_i  u)+\nabla \partial_i\beta \\
 = \partial_i(\mu\Delta u-\Delta   {\sigma}(\kappa\mathbf{e}^3+  \nabla \sigma)) 	- \partial_i \rho u_t - \partial_i(\rho u)\cdot\nabla u. 
	\end{cases}
	\end{align} 
	Taking the inner product of \eqref{2.18}$_1$ and $-\Delta^2 \sigma$ in $L^2$, and then  using the integration by parts and the boundary conditions of $(\sigma,\partial_3^2\sigma,u_3)$ in \eqref{20220202081737}, \eqref{2.1} yields
	\begin{align}\label{2.19}
	\frac{1}{2}\frac{\mm{d}}{\mm{d}t}\| \nabla\Delta \sigma\|^2_{0}
	=\int(\Delta^2 \sigma\Delta ( u\cdot\nabla \sigma)-\kappa\nabla \Delta  \sigma \cdot \nabla \Delta u_3 
	 )\mm{d}x.
	\end{align}

Exploiting the integration by parts, \eqref{1.1}$_1$ and the boundary condition of $u_3$ in \eqref{20220202081737}, we can derive that
	\begin{align}
\label{202501252040}
	\int\rho u\cdot \nabla \Delta  u\cdot \Delta u\mm{d}x
	=-\frac{1}{2}\int\mm{div} (\rho u)| \Delta u|^2\mm{d}x
	=\frac{1}{2}\int\rho_t| \Delta u|^2\mm{d}x.
	\end{align}
In view of the incompressible condition, the boundary condition of $(\varrho, \partial_3^2\varrho,u_3,\partial_3u_{\mm{h}})$ in \eqref{20220202081737}, \eqref{2.1} and the equation \eqref{1.7}$_2$, it is to see that 
\begin{align}\label{20225012412348}
(\partial_3^2 u_3, \partial_3  \beta)|_{\partial\Omega}=0.
\end{align}
Taking the inner product of \eqref{2.18}$_2$ and $-\partial_i \Delta u$ in $L^2$, and then making use of   the incompressible condition, the integration by parts, the boundary conditions of $(\sigma, \partial_3\rho, u_3,\partial_3u_{\mm{h}},\partial_3^2 u_3, \partial_3\beta)$ in \eqref{20220202081737},  \eqref{20225012412348},  \eqref{2.1}, and the identity \eqref{202501252040}, we deduce that 
	\begin{align}
&\frac{1}{2}\frac{\mm{d}}{\mm{d}t}\|\sqrt{\rho} \Delta u\|^2_{0}+\mu\|\nabla\Delta  u\|^2_{0}\nonumber\\
	=&\int\bigg( \partial_i ( \Delta \sigma(\kappa   \mathbf{e}^3 +\nabla \sigma))\cdot \Delta \partial_i u-(2\nabla  \rho\cdot \nabla u_t \nonumber 
\\ \label{2.20}	
&+\Delta \rho u_t+2 \partial_i(\rho u)\cdot\partial_i\nabla u+\Delta (\rho u)\cdot\nabla u)\cdot \Delta u\bigg)\mm{d}x.
	\end{align} 
Putting \eqref{2.19}  and   \eqref{2.20} together yields
	\begin{align}\label{2.21}
	&\frac{1}{2}\frac{\mm{d}}{\mm{d}t}(\| \nabla\Delta \sigma\|_0^2+\|\sqrt{\rho}\Delta u\|^2_{0})
	+\mu\|\nabla\Delta u\|^2_{0}\nonumber\\
    =& \int( \partial_i\Delta u\cdot\partial_i(\Delta \sigma\nabla \sigma)+\Delta^2 \sigma\Delta(u\cdot\nabla \sigma) )\mm{d}x -\int(2\nabla\rho\cdot\nabla u_t\nonumber\\
&+\Delta\rho u_t+2\partial_i(\rho u)\cdot\nabla\partial_i u+\Delta(\rho u)\cdot\nabla u)\cdot \Delta u\mm{d}x
=:J_2-J_1,
	\end{align}	
 where we have used the Einstein convention of summation over repeated indices.	
	Next we shall estimate for $J_1$ and $J_2$ in sequence.

Using the definition of $\rho$ and the imbedding inequality \eqref{1}, we get
\begin{align}
\|\nabla^i\rho\|_{L^{\infty}}=\|\nabla^i(\bar{\rho}+\kappa  ^{- 1}\sigma)\|_{L^{\infty}}\lesssim
1+\kappa  ^{-1 } \| \nabla^i \sigma\|_{2}\mbox{ for }i=0,\ 1\label{2.44}.
\end{align} 
Making use of H\"older inequality,    the product estimates in \eqref{4} and  \eqref{2.44}, we can estimate that
	\begin{align}\label{2.26}
	J_1\lesssim(1+\kappa  ^{-1}\| \sigma\|_3)\|\nabla u\|_1(\|u_t\|_1+\|u\|_2\|\nabla u\|_2).
	\end{align}

Now we turn to estimating for $J_2$.
Using the incompressible condition, the integration by parts and the boundary conditions of $(\sigma,\partial_3^2\sigma ,u_3,\partial_3u_{\mm{h}},\partial_3^2u_3)$ in \eqref{20220202081737}, \eqref{20225012412348}, \eqref{2.1}, it holds that
\begin{align*}
&\int \Delta^2\sigma u\cdot\nabla\Delta \sigma\mm{d}x
=-\int(\nabla \Delta\sigma\cdot \nabla )u\cdot\nabla\Delta\sigma\mm{d}x,\\
& \int\Delta^2\sigma \partial_j u\cdot\nabla\partial_j\sigma\mm{d}x
=- \int\partial_i\Delta\sigma(\partial_j u\cdot \nabla\partial_i\partial_j\sigma +\partial_i\partial_j u\cdot\nabla\partial_j\sigma)\mm{d}x.
\end{align*}
and 
\begin{align*} 
\int \Delta^2 \sigma \Delta u\cdot\nabla \sigma\mm{d}x =-\int \nabla \Delta \sigma \cdot \nabla (\Delta u\cdot\nabla \sigma)\mm{d}x.
\end{align*}
We can use the above three identities and the integration by parts to rewrite $J_2$ as follows:
	\begin{align}
	J_2=& \int( \Delta \sigma\nabla \partial_i\sigma \cdot \partial_i\Delta u -\partial_i \Delta\sigma(\partial_i u\cdot\nabla\Delta\sigma +2(\partial_j u\cdot \nabla\partial_i\partial_j\sigma \nonumber  \\
&+\partial_i\partial_j u\cdot\nabla\partial_j\sigma) +   \Delta u\cdot\nabla \partial_i\sigma)
 )\mm{d}x\nonumber  \\
	=& -  \int(
\partial_i\Delta\sigma(\partial_i u\cdot\nabla\Delta\sigma+2( \partial_j u\cdot\nabla\partial_i\partial_j\sigma+ \Delta u\cdot\nabla\partial_i\sigma \nonumber \\
&+\partial_i\partial_j u\cdot\nabla\partial_j\sigma))+\Delta\sigma\Delta u\cdot\nabla\Delta\sigma )\mm{d}x 
	= J_{2,1}+J_{2,2}+J_{2,3}, \label{2337}
	\end{align}
where we have defined that
\begin{align*}
J_{2,1}:=&- \int\bigg( \sum_{i=1}^2
\partial_i\Delta\sigma(\partial_i u\cdot\nabla\Delta\sigma+  2(\partial_j u\cdot\nabla\partial_i\partial_j\sigma + \Delta u\cdot\nabla\partial_i\sigma  \nonumber \\
&+\partial_i\partial_j u\cdot\nabla\partial_j\sigma))
+ 
\partial_3\Delta_{\mm{h}}\sigma(\partial_3 u\cdot\nabla\Delta\sigma+ 2(    \partial_j u\cdot\nabla\partial_3\partial_j\sigma+\partial_3\partial_j u\cdot\nabla\partial_j\sigma \nonumber \\
&+   \Delta u\cdot\nabla\partial_3\sigma)) + 
\partial_3^3\sigma(\partial_3 u_{\mm{h}}\cdot\nabla_{\mm{h}}\Delta\sigma+\partial_3 u_3\partial_3\Delta_{\mm{h}}\sigma+ 2(\partial_j u_{\mm{h}}\cdot\nabla_{\mm{h}}\partial_3\partial_j\sigma\nonumber \\
&+   \nabla_{\mm{h}} u_3 \cdot\nabla_{\mm{h}}\partial_3^2 \sigma+\partial_3\partial_j u_{\mm{h}}\cdot\nabla_{\mm{h}}\partial_j\sigma+   \partial_3\nabla_{\mm{h}} u_3\cdot\partial_3\nabla_{\mm{h}}\sigma+    \Delta u_{\mm{h}}\cdot\nabla_{\mm{h}}\partial_3\sigma))\nonumber \\
&+\Delta_{\mm{h}}\sigma\Delta u\cdot\nabla\Delta\sigma+\partial_3^2\sigma(\Delta u_{\mm{h}}\cdot\nabla_{\mm{h}}\Delta\sigma+  \Delta u_3\Delta_{\mm{h}}\partial_3\sigma)\bigg)\mm{d}x,\\
J_{2,2}:=&- \int\partial_3^2 \sigma  \partial_3^3\sigma(2\partial_3^2u_3+ 3\Delta u_3) \mm{d}x\mbox{ and }
J_{2,3}:= -3 \int(\partial_3^3 \sigma)^2 \partial_3u_3\mm{d}x.
\end{align*}
	
Take a similar procedure as \eqref{2.26}, we can easily obtain
	\begin{align}\label{2.27}
		J_{2,1}\lesssim\| \sigma\|_{1,2}\| \sigma\|_3\|\nabla u\|_2.
	\end{align}
	
Making use of  the incompressible condition, the integration by parts, the product estimate, and the boundary condition  of $ \partial_3^2\sigma $ in   \eqref{2.1}, we deduce that
	\begin{align}\label{2.28}
	J_{2,2}=&\frac{1}{2} \int(\partial_3^2\sigma)^2 \partial_3(2\partial_3^2u_3+3\Delta u_3  )
\mm{d}x 
=- \frac{1}{2} \int( \partial_3^2\sigma)^2( 2\partial_3^2 \mm{div}_{\mm{h}} u_{\mm{h}}+3\Delta\mm{div}_{\mm{h}} u_{\mm{h}})\mm{d}x\nonumber\\
=&\frac{1}{2}   \int ( 2\partial_3^2u_{\mm{h}} +3\Delta u_{\mm{h}}) \cdot \nabla_{\mm{h}}(\partial_3^2\sigma)^2\mm{d}x 
	\lesssim \| \sigma\|_{1,2}\| \sigma\|_3\|\nabla u\|_{2}.
	\end{align}
	
Due to the absence of the dissipation of $\partial_3^3\sigma$, we can not directly estimate for $J_{2,3}$. To overcome this difficulty, we shall use equation $\eqref{1.7}_1$ twice as follows. 
	\begin{align}
	&J_{2,3}/3 \nonumber \\
=&{\kappa  }^{-1}\int(\partial_3^3 \sigma)^2\partial_3(\sigma_t+u\cdot\nabla \sigma)\mm{d}x\nonumber \\
=& {\kappa  }^{-1}\int((\partial_3^3 \sigma)^2\partial_3(u\cdot\nabla \sigma)	-{2} \partial_3\sigma\partial_3^3\sigma \partial_3^3\sigma_t)\mm{d}x+{\kappa  }^{-1}\frac{\mm{d}}{\mm{d}t}\int\partial_3\sigma(\partial_3^3 \sigma)^2\mm{d}x\nonumber \\
=&{\kappa  }^{-1}\int({2} \partial_3\sigma\partial_3^3\sigma \partial_3^3({\kappa  } u_3 +u\cdot\nabla \sigma  ) +(\partial_3^3 \sigma)^2 \partial_3(u\cdot\nabla \sigma))\mm{d}x+{\kappa  }^{-1}\frac{\mm{d}}{\mm{d}t}\int\partial_3\sigma(\partial_3^3 \sigma)^2\mm{d}x\nonumber \\
=&   {\kappa  }^{-1}\int ({2}\partial_3\sigma\partial_3^3\sigma (
 {\kappa  }\partial_3^3u_3 +\partial_3^3u\cdot\nabla \sigma
	+3\partial_3^2u\cdot\nabla\partial_3 \sigma\nonumber \\
&+3\partial_3u\cdot\nabla\partial_3^2 \sigma ) +(\partial_3^3 \sigma)^2 \partial_3u\cdot\nabla \sigma )\mm{d}x+{\kappa  }^{-1}\frac{\mm{d}}{\mm{d}t}\int\partial_3\sigma(\partial_3^3 \sigma)^2\mm{d}x
\nonumber \\
=&{\kappa  }^{-1}\int\partial_3^3\sigma(2\partial_3\sigma(\partial_3^3u_{\rm h}\cdot\nabla_{\rm h} \sigma
	+3\partial_3^2u_{\rm h}\cdot\nabla_{\rm h}\partial_3 \sigma+3\partial_3u_{\rm h}\cdot\nabla_{\rm h}\partial_3^2 \sigma\nonumber \\
	&+ \partial_3^3\sigma\partial_3u_{\rm h}\cdot\nabla_{\rm h} \sigma))\mm{d}x+2\int
\partial_3\sigma\partial_3^3\sigma (\partial_3^3u_3+{\kappa  }^{-1} (\partial_3 \sigma\partial_3^3u_3
	+3\partial_3^2 \sigma\partial_3^2u_3 ))\mm{d}x\nonumber \\
	&+{7}{\kappa  }^{-1 }\int\partial_3\sigma(\partial_3^3 \sigma)^2\partial_3u_3\mm{d}x+{\kappa  }^{-1}\frac{\mm{d}}{\mm{d}t}\int\partial_3\sigma(\partial_3^3 \sigma)^2\mm{d}x
\nonumber 	\\
=:&J^{2,3}_1+2J^{2,3}_2+7J^{2,3}_3+{\kappa  }^{-1}\frac{\mm{d}}{\mm{d}t}\int\partial_3\sigma(\partial_3^3 \sigma)^2\mm{d}x \label{202412171632}
,
	\end{align}
	where in the fourth equality we have used the identity
	\begin{align*}
		 \int\partial_3^3 \sigma( \partial_3^3 \sigma u\cdot\nabla\partial_3 \sigma+2\partial_3 \sigma u\cdot\nabla\partial_3^3 \sigma)\mm{d}x= \int u\cdot\nabla( \partial_3\sigma(\partial_3^3 \sigma)^2)\mm{d}x=0.
	\end{align*}
Next we shall estimate the three terms $J^{2,3}_1$--$J^{2,3}_3$ in sequence.

It  follows from the product estimates that
\begin{align}
J^{2,3}_1\lesssim\kappa  ^{-1}\| \sigma\|_{1,2}\| \sigma\|_3^2\|\nabla u\|_{2}.
\label{20250124}
\end{align}

Exploiting the incompressible condition, the integration by parts, the product estimates and the boundary condition  of $ \partial_3^2\sigma $ in  \eqref{2.1}, we have
\begin{align}
J^{2,3}_2=&-\int
\partial_3^3\sigma (\partial_3\sigma\partial_3^2\mm{div}_{\mm{h}}u_{\mm{h}}+{\kappa  }^{-1} \partial_3 \sigma(\partial_3 \sigma\partial_3^2\mm{div}_{\mm{h}}u_{\mm{h}}
	+3\partial_3^2 \sigma\partial_3 \mm{div}_{\mm{h}}u_{\mm{h}} ))\mm{d}x\nonumber \\
    	=& \int
\partial_3^3\sigma (\partial_3^2u_{\mm{h}}\cdot \nabla_{\mm{h}}\partial_3\sigma+{\kappa  }^{-1}(\partial_3^2 u_{\mm{h}}\cdot \nabla_{\mm{h}}(\partial_3 \sigma)^2
	+3\partial_3 u_{\mm{h}} \cdot \nabla_{\mm{h}}(\partial_3 \sigma\partial_3^2 \sigma)))\mm{d}x\nonumber \\
&-\int
 \partial_3(\partial_3\sigma\partial_3^2 u_{\mm{h}}+{\kappa  }^{-1} \partial_3 \sigma(\partial_3 \sigma\partial_3^2 u_{\mm{h}}
	+3\partial_3^2 \sigma\partial_3 u_{\mm{h}} ))\cdot \nabla_{\mm{h}}\partial_3^2\sigma\mm{d}x
   \nonumber \\ 	\lesssim&(1+\kappa  ^{-1}\| \sigma\|_3)\| \sigma\|_{1,2}\| \sigma\|_3\|\nabla u\|_{2}.
    \label{2501241000}
    \end{align}
    
Similar to $J_{2,3}$, we shall use the equation $\eqref{1.7}_1$ twice to rewrite $J_3^{2,3}$ as follows:
\begin{align}
J_3^{2,3} 
=&-{\kappa  }^{-2}\int\partial_3\sigma(\partial_3^3 \sigma)^2\partial_3(\sigma_t+u\cdot\nabla \sigma)\mm{d}x  \nonumber \\
=&{\kappa  }^{-2}\int\left( {(\partial_3\sigma)^2  } \partial_3^3\sigma \partial_3^3\sigma_t
    -\partial_3\sigma(\partial_3^3 \sigma)^2 \partial_3(u\cdot\nabla \sigma)\right)\mm{d}x-\frac{1}{2{\kappa  }^{2}}\frac{\mm{d}}{\mm{d}t}\int(\partial_3\sigma \partial_3^3 \sigma)^2\mm{d}x  \nonumber \\
=&-{\kappa  }^{-2}\int \partial_3\sigma \partial_3^3\sigma\left( 
 {\partial_3\sigma} \partial_3^3({\kappa  } u_3+u\cdot\nabla \sigma )+  \partial_3^3 \sigma \partial_3(u\cdot\nabla \sigma)\right)\mm{d}x  \nonumber \\
    &-\frac{1}{2{\kappa  }^{2}}\frac{\mm{d}}{\mm{d}t}\int(\partial_3\sigma \partial_3^3 \sigma)^2\mm{d}x
  \nonumber \\
=&
    -\kappa^{-1 }\int(\partial_3\sigma)^2\partial_3^3\sigma\partial_3^3u_3\mm{d}x
    -{\kappa  }^{-2}\int\partial_3\sigma(\partial_3^3 \sigma)^2\partial_3u\cdot\nabla \sigma\mm{d}x-{\kappa  }^{-2}\int(\partial_3\sigma)^2\partial_3^3\sigma (\partial_3^3u\cdot\nabla \sigma  \nonumber \\
    &
    +3\partial_3^2u\cdot\nabla\partial_3 \sigma
    +3\partial_3u\cdot\nabla\partial_3^2 \sigma )\mm{d}x-\frac{1}{2{\kappa  }^{2}}\frac{\mm{d}}{\mm{d}t}\int(\partial_3\sigma \partial_3^3 \sigma)^2\mm{d}x  \nonumber \\
=:& J^{2,3}_{3,1}+J^{2,3}_{3,2}+J^{2,3}_{3,3}-\frac{1}{2{\kappa  }^{2}}\frac{\mm{d}}{\mm{d}t}\int(\partial_3\sigma \partial_3^3 \sigma)^2\mm{d}x. \label{2501240957}
\end{align}
  By  the incompressible condition, the integration by parts, the product estimate,  and  the boundary condition  of $ \partial_3^2\sigma $  in \ \eqref{2.1}, we have
    \begin{align*}
    	J^{2,3}_{3,1}=& \kappa^{-1 }\int(\partial_3\sigma)^2\partial_3^3\sigma\partial_3^2\mm{div}_{\rm h} u_{\rm h}\mm{d}x\\
    	=&\frac{1}{2\kappa }\int \left(\nabla_{\rm h}\partial_3^2\sigma\cdot \partial_3((\partial_3\sigma)^2\partial_3^2u_{\rm h})  - \partial_3^3\sigma\partial_3^2u_{\rm h}\cdot\nabla_{\rm h}(\partial_3\sigma)^2\right)\mm{d}x\\ 
    	\lesssim&\kappa  ^{-1}\| \sigma\|_{1,2}\| \sigma\|_3^2\|\nabla u\|_2.
    \end{align*} 
In addition, we can use   the product estimate and  the anisotropic interpolation inequality \eqref{lemma4.7.2} to obtain 
    \begin{align*}
    	J^{2,3}_{3,2}\lesssim&\kappa  ^{-2}\|\nabla \sigma\|_{L^\infty}^2\|\partial_3^3\sigma\|_0^2\|\partial_3u\|_2\\
    	\lesssim&\kappa  ^{-2} \|\partial_3^3\sigma\|_0^2\|\nabla \nabla_{\rm h}\sigma\|_{1} \|\nabla \sigma\|_{2} \|\partial_3u\|_{2} 
    	\lesssim\kappa  ^{-2}\| \sigma\|_{1,2}\| \sigma\|_3^3\|\nabla u\|_2.
    \end{align*}
    Similarly, we also get
    \begin{align*}
    	J^{2,3}_{3,3} 
    	\lesssim&\kappa  ^{-2}\|\nabla_{\rm h}\partial_3\sigma\|_1 \|\partial_3\sigma\|_{2}^2(
\|\nabla\partial_3^2\sigma\|_0    \|\partial_3u\|_{2}+\|\nabla\partial_3\sigma\|_{1}\|\partial_3^2u\|_1+
\|\nabla \sigma\|_{2}\|\partial_3^3u\|_{0})\\
    	\lesssim&\kappa  ^{-2}\| \sigma\|_{1,2}\| \sigma\|_3^3\|\nabla u\|_2.
   \end{align*}
Thus substituting the above three estimates into \eqref{2501240957} yields 
	\begin{align}\label{2.saf29}
	J^{2,3}_3\leqslant\| \sigma\|_{1,2}(\kappa  ^{-1}\| \sigma\|_3^2+\kappa  ^{-2}\| \sigma\|_3^3)\|\nabla u\|_2-\frac{1}{2{\kappa  }^{2}}\frac{\mm{d}}{\mm{d}t}\int(\partial_3\sigma \partial_3^3 \sigma)^2\mm{d}x.
	\end{align}

Now inserting \eqref{20250124}, \eqref{2501241000} and \eqref{2.saf29} into \eqref{202412171632} and then using Young's inequality, we obtain 
	\begin{align}\label{2.29}
	J_{2,3}\leqslant {3}\frac{\mm{d}}{\mm{d}t}\int\left({\kappa  ^{-1}}-\frac{7}{2{\kappa  ^{2}}}\partial_3\sigma\right)\partial_3\sigma(\partial_3^3 \sigma)^2\mm{d}x
+c\| \sigma\|_{1,2}(\| \sigma\|_3+\kappa  ^{-2}\| \sigma\|_3^3)\|\nabla u\|_2.
	\end{align}

Thanks to the three estimates \eqref{2.27}, \eqref{2.28} and \eqref{2.29},  we derive from \eqref{2337} that
	\begin{align}\label{2.30}
		J_{2}\leqslant  {3}\frac{\mm{d}}{\mm{d}t}\int\left({\kappa  ^{-1}} -\frac{7}{2{\kappa  ^{2}} }\partial_3\sigma\right)\partial_3\sigma(\partial_3^3 \sigma)^2\mm{d}x
+c\| \sigma\|_{1,2}(\| \sigma\|_3+\kappa  ^{-2}\| \sigma\|_3^3)\|\nabla u\|_2.
	\end{align}
Finally, putting \eqref{2.26} and \eqref{2.30} into \eqref{2.21}, we arrive at the desired estimate  \eqref{2.17}. 

 (2) 
Applying $\partial_{t}$ to \eqref{1.7}$_{1}$ and \eqref{1.7}$_{2}$, we get
\begin{equation}\label{2.37}
\begin{cases}
\partial_t(\sigma_{t}+u\cdot\nabla \sigma+ {\kappa  }u_3)=0,\\
\partial_t({\rho}(u_{t}+ u\cdot\nabla u)+\nabla\beta -\mu\Delta u+ \Delta{\sigma}(\kappa  \mathbf{e}^3+ \nabla{\sigma}))=0.
\end{cases}
\end{equation} Taking the inner products of \eqref{2.37}$_1$ resp. \eqref{2.37}$_2$ and $- \Delta \sigma_{t}$, resp. $u_{t}$ in $L^2$,  then following the argument of \eqref{2.16}, and finally making use of the product estimates, the relation $\rho_t=\kappa^{-1}\sigma_t$  and \eqref{2.44}, we have
\begin{align*}
&\frac{1}{2}\frac{\mm{d}}{\mm{d}t}(\| \nabla \sigma_t\|^2_0+\|\sqrt{\rho}u_t\|^2_0)+\mu\|\nabla u_t\|^2_0\nonumber\\
=&\int\left(\Delta{\sigma_t}u\cdot\nabla{\sigma_t}-(\rho_tu_t+\partial_t({\rho} u)\cdot\nabla u+ \Delta{\sigma}\nabla{\sigma_t})\cdot u_t\right)\mm{d}x\nonumber\\
\lesssim&\| \sigma_t\|_2\left(\| \sigma_t\|_1\|u\|_2+(\| \sigma\|_3+\kappa  ^{-1} \|u\|_2^2 )\|u_t\|_0\right)\nonumber\\
&+((1+\kappa  ^{-1}\| \sigma\|_2)\|u\|_2+\kappa^{-1}\|\sigma_t\|_1)\|u_t\|_0\|u_t\|_1,
\end{align*}
which yields \eqref{2.42}. This completes the proof. 
\end{proof}

Next we shall derive the dissipative  estimates for $\Delta\sigma_t$ and $u_t$.
\begin{lem}\label{lem2.4}
It holds that
\begin{align}\label{2.31}
&\frac{\mm{d}}{\mm{d}t}\left(\frac{\mu}{2}\|\nabla u\|^2_0
- \int\Delta \sigma \sigma_t\mm{d}x\right)
+\|\sqrt{\rho}u_t\|^2_0\nonumber\\
\lesssim&\| \nabla \sigma_t\|_0^2
+\|\sigma_t\|_1(\|\sigma\|_{1,2}\|u\|_1+\|\sigma\|_3\|\nabla u\|_1)+ (1+\kappa  ^{-1}\|\sigma\|_2)\|\nabla u\|_0\| u\|_{2}\|u_t\|_0
\end{align}
and
\begin{align}\label{2.32}
&\frac{\mm{d}}{\mm{d}t}\left(\frac{\mu}{2}\|\Delta u\|_0^2
-\int\rho \Delta u\cdot u_t\mm{d}x\right)
+\|\Delta \sigma_t\|_0^2\nonumber\\
\lesssim&\kappa  ^{-1}\| \sigma_t\|_0\|u\|_2^2\|\nabla u\|_2
+(1+\kappa  ^{-1}\| \sigma\|_2)\|u_t\|_1(\| \nabla u_t\|_0+\|u\|_2\|\nabla u\|_2)\nonumber\\
&+\| \sigma_t\|_2(\| \sigma\|_{1,2}\|u\|_2+\| \sigma\|_3\|\nabla u\|_2).
\end{align}
\end{lem}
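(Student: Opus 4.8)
The plan is to obtain both inequalities by the mechanism already used in Lemmas \ref{lem2.2} and \ref{lem2.5}: test a (possibly time-differentiated) form of the momentum equation \eqref{1.7}$_2$ against a velocity multiplier, let the viscous term generate the time-derivative of a velocity energy, let the inertial term generate the stated cross functional, and convert the leading capillary forcing $-\kappa\Delta\sigma\,\mathbf{e}^3$ into the advertised positive dissipation by invoking the transport equation \eqref{1.7}$_1$.

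For \eqref{2.31} I would take the $L^2$ inner product of \eqref{1.7}$_2$ with $u_t$. The pressure term drops since $\mm{div}\,u_t=0$ and $\partial_t u_3|_{\partial\Omega}=0$; the inertial term $\int\rho u_t\cdot u_t\,\mm{d}x$ produces $\|\sqrt{\rho}u_t\|_0^2$; and, after one integration by parts under the Navier-slip condition \eqref{20220202081737}, the viscous term yields $-\frac{\mu}{2}\frac{\mm{d}}{\mm{d}t}\|\nabla u\|_0^2$. The only delicate term is the leading capillary contribution $-\kappa\int\Delta\sigma\,\partial_t u_3\,\mm{d}x$. Here I would use \eqref{1.7}$_1$ \emph{twice}: first replace $\kappa\partial_t u_3=-\sigma_{tt}-\partial_t(u\cdot\nabla\sigma)$, then integrate by parts in time to extract $\frac{\mm{d}}{\mm{d}t}\int\Delta\sigma\,\sigma_t\,\mm{d}x$; the residual $-\int\Delta\sigma_t\,\sigma_t\,\mm{d}x$ integrates by parts in space to $\|\nabla\sigma_t\|_0^2$, using $\sigma_t|_{\partial\Omega}=0$ (which follows from $\sigma\in H^3_{\bar{\rho}}\subset H^3_0$). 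Collecting the terms reproduces the left-hand side of \eqref{2.31}, and the genuinely nonlinear integrals are controlled by the product estimates \eqref{4}, the embedding \eqref{1}, and \eqref{2.44}.

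For \eqref{2.32}, which keeps the \emph{second}-order dissipation $\|\Delta\sigma_t\|_0^2$, I would instead test the time-differentiated momentum equation \eqref{2.37}$_2$ against $\Delta u$. The viscous term is then immediate: $-\mu\int\Delta u_t\cdot\Delta u\,\mm{d}x=-\frac{\mu}{2}\frac{\mm{d}}{\mm{d}t}\|\Delta u\|_0^2$, and the pressure term vanishes because $\mm{div}\,\Delta u=0$ and $\Delta u_3|_{\partial\Omega}=0$ (the latter from $u_3|_{\partial\Omega}=0$ on the flat boundary together with $\partial_3^2u_3|_{\partial\Omega}=0$ in \eqref{20225012412348}). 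In the inertial term I would write $\int\rho u_{tt}\cdot\Delta u\,\mm{d}x=\frac{\mm{d}}{\mm{d}t}\int\rho\,\Delta u\cdot u_t\,\mm{d}x-\int\rho_t u_t\cdot\Delta u\,\mm{d}x-\int\rho u_t\cdot\Delta u_t\,\mm{d}x$; the $\rho_t$-integral cancels against the one coming from $\partial_t(\rho u_t)$ in \eqref{2.37}$_2$, while $-\int\rho u_t\cdot\Delta u_t\,\mm{d}x$ integrates by parts (boundary vanishing via $\partial_3\partial_t u_1=\partial_3\partial_t u_2=0$ and $\partial_t u_3=0$ on $\partial\Omega$) into $\|\sqrt{\rho}\nabla u_t\|_0^2$ plus lower-order terms. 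The crucial step is the leading capillary term $\kappa\int\Delta\sigma_t\,\Delta u_3\,\mm{d}x$: applying $\Delta$ to \eqref{1.7}$_1$ gives $\kappa\Delta u_3=-\Delta\sigma_t-\Delta(u\cdot\nabla\sigma)$, so this integral equals $-\|\Delta\sigma_t\|_0^2-\int\Delta\sigma_t\,\Delta(u\cdot\nabla\sigma)\,\mm{d}x$. Rearranging then produces exactly $\frac{\mm{d}}{\mm{d}t}(\frac{\mu}{2}\|\Delta u\|_0^2-\int\rho\Delta u\cdot u_t\,\mm{d}x)+\|\Delta\sigma_t\|_0^2$ on the left.

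Finally the residuals must be matched to the stated right-hand side. The emergent $\|\sqrt{\rho}\nabla u_t\|_0^2$ is \emph{not} discarded but estimated, via \eqref{2.44}, by $\|\sqrt{\rho}\nabla u_t\|_0^2\lesssim(1+\kappa^{-1}\|\sigma\|_2)\|\nabla u_t\|_0^2\le(1+\kappa^{-1}\|\sigma\|_2)\|u_t\|_1\|\nabla u_t\|_0$, which is precisely the advertised term; the convective remainders $\kappa^{-1}\int\sigma_t(u\cdot\nabla u)\cdot\Delta u\,\mm{d}x$ and $\int\rho\,\partial_t(u\cdot\nabla u)\cdot\Delta u\,\mm{d}x$ (using $\rho_t=\kappa^{-1}\sigma_t$) yield the $\kappa^{-1}\|\sigma_t\|_0\|u\|_2^2\|\nabla u\|_2$ and $(1+\kappa^{-1}\|\sigma\|_2)\|u_t\|_1\|u\|_2\|\nabla u\|_2$ contributions, and the capillary remainders $-\int\Delta\sigma_t\,\Delta(u\cdot\nabla\sigma)\,\mm{d}x$ and $\int\partial_t(\Delta\sigma\,\nabla\sigma)\cdot\Delta u\,\mm{d}x$ give the $\|\sigma_t\|_2(\|\sigma\|_{1,2}\|u\|_2+\|\sigma\|_3\|\nabla u\|_2)$ contribution, all through \eqref{4}, \eqref{1} and \eqref{lemma4.7.2}. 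I expect the main obstacle to be exactly the extraction of the $\kappa$-\emph{uniform} dissipation $\|\Delta\sigma_t\|_0^2$: one must route the capillary forcing through the transport equation (otherwise a $\kappa^2\|\Delta\sigma\|_0^2$-type term would survive and destroy uniformity in $\kappa$) and simultaneously verify that every boundary integral produced by the two spatial integrations by parts vanishes under the Navier-slip conditions, in particular $\Delta u_3|_{\partial\Omega}=0$ together with $\partial_3\partial_t u_1=\partial_3\partial_t u_2=0$ on $\partial\Omega$.
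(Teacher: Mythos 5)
Your proposal is correct and follows essentially the same route as the paper: testing \eqref{1.7}$_2$ against $u_t$ and converting $-\kappa\int\Delta\sigma\,\partial_t u_3\,\mm{d}x$ via the time-differentiated transport equation for \eqref{2.31}, and testing \eqref{2.37}$_2$ against $\pm\Delta u$ while extracting $\|\Delta\sigma_t\|_0^2$ from the Laplacian of \eqref{1.7}$_1$ for \eqref{2.32}, with the same cross functionals, boundary identities \eqref{20225012412348}, and product-estimate treatment of the remainders. The only detail you gloss over is the spatial integration by parts in $\int\Delta\sigma\,u\cdot\nabla\sigma_t\,\mm{d}x$ (moving derivatives onto $u$ and $\Delta\sigma$) needed to produce the specific factors $\|\sigma\|_{1,2}\|u\|_1+\|\sigma\|_3\|\nabla u\|_1$ in \eqref{2.31}, but this is consistent with your stated toolkit.
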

\begin{proof}
(1) 
Taking the inner product of \eqref{1.7}$_{2}$ and $u_t$ in $L^2$, and then using the integration by parts and the boundary condition \eqref{20220202081737}, we get
\begin{align}
&\frac{\mu}{2}\frac{\mm{d}}{\mm{d}t}\|\nabla u\|^2_0
+\|\sqrt{\rho}u_t\|^2_0=-\int( \Delta \sigma (\kappa  \mathbf{e}^3
+\nabla{\sigma})+{\rho} u\cdot\nabla u)\cdot u_t\mm{d}x.
\label{2501222339}
\end{align}
Using $\eqref{1.7}_{1}$ and the boundary condition of $\sigma$ in \eqref{2.1}, we find that
\begin{align*}
	-\kappa  \int \Delta \sigma\partial_tu_3\mm{d}x=&  \int\Delta \sigma\partial_t(\sigma_t+u\cdot\nabla \sigma)\mm{d}x\\
	=&  \frac{\mm{d}}{\mm{d}t}\int\Delta \sigma \sigma_t\mm{d}x+\| \nabla \sigma_t\|_0^2+  \int\Delta \sigma\partial_t(u\cdot\nabla \sigma)\mm{d}x.
\end{align*}Putting the above identity into \eqref{2501222339} yields 
\begin{align}\label{2.33}
&\frac{\mm{d}}{\mm{d}t}\left(\frac{\mu}{2}\|\nabla u\|^2_0
-  \int \Delta \sigma\sigma_t\mm{d}x\right)
+\|\sqrt{\rho}u_t\|^2_0\nonumber\\
=&\| \nabla \sigma_t\|_0^2
+\int( \Delta{\sigma} u \cdot\nabla{\sigma_t} 
- {\rho} u\cdot\nabla u\cdot u_t)\mm{d}x.
\end{align}

Exploiting  H\"older inequality,  the incompressible condition,  the integration by parts, the product estimate, the boundary condition of $u_3$ in \eqref{20220202081737}, \eqref{2.44}, Poincar\'e-type inequality \eqref{3}, we obtain
\begin{align} 
	\int\Delta{\sigma} u\cdot\nabla{\sigma_t}\mm{d}x 
	=&-\int (u_{\rm h}\cdot\nabla_{\rm h}\Delta{\sigma} + u_3\partial_3\Delta{\sigma}){\sigma}_t\mm{d}x\nonumber\\
	\lesssim&\|\sigma_t\|_1(\|\sigma\|_{1,2}\|u\|_1+\|\sigma\|_3\|\nabla u\|_1).\nonumber
\end{align}
Similarly, we also have
\begin{align} 
	-\int{\rho} u\cdot\nabla u\cdot u_t\mm{d}x\lesssim (1+\kappa  ^{-1}\|\sigma\|_2)\|\nabla u\|_0\| u\|_{2}\|u_t\|_0.\nonumber
\end{align}
Putting the above two estimates  into  \eqref{2.33} yields \eqref{2.31}.

(2) Applying $\Delta$ to the mass equation \eqref{1.7}$_1$ yields
$$\Delta (\sigma_t+u\cdot\nabla \sigma+ {\kappa  } u_3)=0.$$
Taking the inner product of the above identity and $   \Delta \sigma_t$ in $L^2$, we have
\begin{align} 
\| \Delta \sigma_t\|_0^2=-\int\Delta \sigma_t\Delta ( \kappa   u_3
+  u\cdot\nabla \sigma )\mm{d}x.\nonumber
\end{align}
Taking the inner product of \eqref{2.37}$_2$ and $-\Delta u$ in $L^2$, and then using the integration by parts, the boundary conditions of $(u_3,\partial_3^2u_3)$ in \eqref{20220202081737}, \eqref{20225012412348}, we arrive at \begin{align} 
&\frac{\mm{d}}{\mm{d}t}\left(\frac{\mu}{2}\|\Delta u\|_0^2
-\int\rho \Delta u\cdot u_t\mm{d}x\right)\nonumber \\
=&\int(\partial_t(\Delta \sigma(\kappa   \mathbf{e}^3+  \nabla \sigma)
+ \rho u\cdot\nabla u )\cdot\Delta u-\rho u_t\cdot\Delta u_t)\mm{d}x.\nonumber
\end{align}
Summing up the above  two identities, we obtain
\begin{align}\label{2.39}
&\frac{\mm{d}}{\mm{d}t}\left(\frac{\mu}{2}\|\Delta u\|_0^2
-\int\rho \Delta u\cdot u_t\mm{d}x\right)
+\| \Delta \sigma_t\|_0^2\nonumber\\
=& \int\left(( \Delta \sigma\nabla \sigma_t+\Delta \sigma_t\nabla \sigma+ \partial_t(\rho u)\cdot\nabla u+\rho u\cdot\nabla u_t
 )\cdot\Delta u
\right. 
\nonumber\\
&\left.  - \Delta \sigma_t \Delta(u\cdot\nabla \sigma) - \rho u_t\cdot\Delta u_t  \right)\mm{d}x
=J_3+J_4,
\end{align}
where we have defined that
\begin{align*}
	J_3:=&\int\Big{((}\Delta \sigma \nabla \sigma_t+\Delta \sigma_t \nabla \sigma+(\rho u_t+\rho_tu)\cdot\nabla u
	)\cdot\Delta u\\
	&\ \ \ \ \ \ - \Delta \sigma_t(2\partial_iu\cdot\nabla\partial_i \sigma+\Delta u\cdot\nabla \sigma )\Big{)}\mm{d}x,\\
	J_4:=&\int(\rho u\cdot\nabla u_t\cdot\Delta u-\rho u_t \cdot\Delta u_t- \Delta \sigma_t u\cdot\nabla\Delta \sigma)\mm{d}x.
\end{align*}
	
Making use of the product estimates, \eqref{2.44} and the relation $\rho_t=\kappa^{-1}\sigma_t$, it is easy to have
\begin{align}\label{2.40} 
J_3\lesssim(\| \sigma\|_2\| \sigma_t\|_2 +(\kappa  ^{-1}\| \sigma_t\|_0\|u\|_2 +(1+\kappa  ^{-1}\| \sigma\|_2)\|u_t\|_0)\|u\|_2)\|\nabla u\|_2.
\end{align}
Exploiting  the integration by parts, the product estimate, the boundary condition \eqref{20220202081737}, and Poincar\'e-type  inequality \eqref{3}, we arrive at
\begin{align}\label{2.41}
	J_4=&\int( \partial_i(\rho u_t)\cdot \partial_i u_t+\rho u \cdot \nabla u_t\cdot \Delta u -  \Delta \sigma_t(u_{\rm h}\cdot\nabla_{\rm h}\Delta \sigma+u_3\partial_3\Delta \sigma ))\mm{d}x\nonumber\\
	\lesssim&(1+\kappa  ^{-1}\| \sigma\|_2)\|u_t\|_1(\|\nabla u_t\|_0+\|\nabla u\|_1\|u\|_2)\nonumber\\
&+\| \sigma_t\|_2(\| \sigma\|_{1,2}\|u\|_2+\| \sigma\|_3\|\nabla u\|_2).
\end{align}
Consequently, inserting \eqref{2.40} and \eqref{2.41} into \eqref{2.39} yields \eqref{2.32}.  
\end{proof}

Next we shall establish the energy estimate of
$\kappa \|(\nabla_{\mm{h}}\partial_{3}\sigma ,-\Delta_{\mm{h}} \sigma) \|_0$ and the dissipation estimate
of $\| \sigma\|_{1,2} $.
\begin{lem}\label{lem2.5sd}
	It holds that
\begin{align}
&\kappa \|(\Delta_{\mm{h}} \sigma,\nabla_{\mm{h}}\partial_{3}\sigma   ) \|_0
\nonumber 
\\ 
&\lesssim \|u\|_2+ (1+\kappa^{-1}\|\sigma\|_2)(\|u_t\|_0+\|\nabla u\|_0 \|u\|_2)+ \| \sigma\|_2  \| \sigma\|_3.\label{202411281534} \end{align}
and \begin{align}
\label{2.43}
\| \sigma\|_{1,2} \lesssim& \kappa  ^{-1}(\|\nabla \Delta u\|_0+\| \sigma\|_{1,2} \| \sigma\|_3)+ (\kappa  ^{-1}+\kappa  ^{-2}\| \sigma\|_2) 
\left(\|u_t\|_{1} +\|u\|_{2} \|\nabla u\|_1\right) 
\end{align}
\end{lem}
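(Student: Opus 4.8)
The plan is to read both estimates directly off the momentum equation \eqref{1.7}$_2$ after isolating the capillary term. I would first rewrite the momentum balance as
\[
\kappa\Delta\sigma\,\mathbf{e}^3+\nabla\beta=G,\qquad G:=\mu\Delta u-\rho(u_t+u\cdot\nabla u)-\Delta\sigma\nabla\sigma,
\]
and split it into its horizontal and vertical components, obtaining the two scalar relations $\nabla_{\mm{h}}\beta=G_{\mm{h}}$ and $\kappa\Delta\sigma+\partial_3\beta=G_3$ on $\Omega$. These are the only structural inputs; the entire proof consists of extracting $\sigma$-information from them while eliminating the pressure $\beta$, and in keeping the number of derivatives falling on $G$ as small as the target norms allow.

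For \eqref{202411281534} I would test $\kappa\Delta\sigma$ against $\Delta_{\mm{h}}\sigma$. Writing $\Delta_{\mm{h}}\sigma\,\Delta\sigma=|\Delta_{\mm{h}}\sigma|^2+\Delta_{\mm{h}}\sigma\,\partial_3^2\sigma$ and integrating the cross term by parts once in $x_3$ and once horizontally — using $\sigma|_{\partial\Omega}=0$ (hence $\Delta_{\mm{h}}\sigma|_{\partial\Omega}=0$) from \eqref{2.1} together with decay as $|x_{\mm{h}}|\to\infty$ to kill the boundary contributions — converts $\int\Delta_{\mm{h}}\sigma\,\partial_3^2\sigma\,\dx$ into $\|\nabla_{\mm{h}}\partial_3\sigma\|_0^2$, so that
\[
\kappa\|(\Delta_{\mm{h}}\sigma,\nabla_{\mm{h}}\partial_3\sigma)\|_0^2=\kappa\int\Delta_{\mm{h}}\sigma\,\Delta\sigma\,\dx .
\]
Substituting $\kappa\Delta\sigma=G_3-\partial_3\beta$ and integrating the $\partial_3\beta$ term by parts (moving $\partial_3$ and then $\Delta_{\mm{h}}$ off $\sigma$, then invoking $\nabla_{\mm{h}}\beta=G_{\mm{h}}$ to remove $\beta$) turns the right-hand side into $\int\Delta_{\mm{h}}\sigma\,G_3\,\dx-\int\nabla_{\mm{h}}\partial_3\sigma\cdot G_{\mm{h}}\,\dx$. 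Cauchy--Schwarz then yields $\kappa\|(\Delta_{\mm{h}}\sigma,\nabla_{\mm{h}}\partial_3\sigma)\|_0\lesssim\|G\|_0$, and bounding $\|G\|_0$ via \eqref{2.44}, the product estimates, and the Sobolev embedding $\|u\|_{L^\infty}\lesssim\|u\|_2$ reproduces exactly the right-hand side of \eqref{202411281534}.

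For \eqref{2.43} I would pass to one more horizontal derivative. Differentiating the two scalar relations and using $\partial_j\partial_3\beta=\partial_3\partial_j\beta$ gives the pointwise identity $\kappa\nabla_{\mm{h}}\Delta\sigma=\nabla_{\mm{h}}G_3-\partial_3 G_{\mm{h}}$ on $\Omega$, which again eliminates the pressure. Since $\partial_k\sigma$ ($k=1,2$) lies in $H^2\cap H^1_0$, Dirichlet elliptic regularity on the slab combined with the vertical Poincar\'e inequality \eqref{3} gives $\|\partial_k\sigma\|_2\lesssim\|\Delta\partial_k\sigma\|_0$, hence $\|\sigma\|_{1,2}\lesssim\|\nabla_{\mm{h}}\Delta\sigma\|_0=\kappa^{-1}\|\nabla_{\mm{h}}G_3-\partial_3 G_{\mm{h}}\|_0$. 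It then remains to estimate the single derivative $\nabla_{\mm{h}}G_3-\partial_3 G_{\mm{h}}$ of $G$ in $L^2$: the term $\mu\Delta u$ contributes $\|\nabla\Delta u\|_0$; the terms $\rho u_t$ and $\rho u\cdot\nabla u$ contribute, via \eqref{2.44} and the product estimates, the factor $(1+\kappa^{-1}\|\sigma\|_2)$ against $\|u_t\|_1$ and $\|u\|_2\|\nabla u\|_1$ respectively; and in the capillary term $\Delta\sigma\nabla\sigma$ the top-order piece $\Delta\sigma\,\nabla_{\mm{h}}\partial_3\sigma$ cancels in the combination $\nabla_{\mm{h}}(\cdot)_3-\partial_3(\cdot)_{\mm{h}}$, leaving $\partial_3\sigma\,\nabla_{\mm{h}}\Delta\sigma-\partial_3\Delta\sigma\,\nabla_{\mm{h}}\sigma$, bounded by $\|\sigma\|_3\|\sigma\|_{1,2}$. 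Multiplying through by $\kappa^{-1}$ gives \eqref{2.43}.

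The main obstacle — and the reason the estimate is delicate — is eliminating the pressure while keeping the derivative count minimal. The gain in \eqref{202411281534} hinges on the exact integration-by-parts identity collapsing $\kappa\|(\Delta_{\mm{h}}\sigma,\nabla_{\mm{h}}\partial_3\sigma)\|_0^2$ to the single pairing $\kappa\int\Delta_{\mm{h}}\sigma\,\Delta\sigma\,\dx$, so that no derivative ever falls on $G$; and \eqref{2.43} hinges on the structural cancellation of the top-order capillary term in $\nabla_{\mm{h}}G_3-\partial_3 G_{\mm{h}}$. The care is concentrated in verifying that every boundary term genuinely vanishes under the mixed Dirichlet/affine conditions of $H^3_{\bar\rho}$ in \eqref{2.1} and the horizontal decay on the slab, and in checking that the residual $\sigma$-quadratic contributions stay at the level $\|\sigma\|_{1,2}\|\sigma\|_3$ rather than being forced to a higher norm.
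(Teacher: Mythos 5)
Your proposal is correct and is essentially the paper's own argument in componentwise form: the identity $\kappa\|(\Delta_{\mm{h}}\sigma,\nabla_{\mm{h}}\partial_3\sigma)\|_0^2=\kappa\int\Delta_{\mm{h}}\sigma\,\Delta\sigma\,\mm{d}x$ together with your elimination of $\beta$ is exactly the paper's pairing of the rewritten momentum equation \eqref{3.3} with the divergence-free field $(\nabla_{\mm{h}}\partial_3\sigma,-\Delta_{\mm{h}}\sigma)^\top$ (whose third component vanishes on $\partial\Omega$), and your combination $\nabla_{\mm{h}}G_3-\partial_3G_{\mm{h}}$ is the horizontal part of $\mm{curl}$ applied to the momentum equation, i.e.\ the paper's vortex-equation argument, with the same cancellation of the top-order capillary term and the same elliptic estimate $\|\partial_k\sigma\|_2\lesssim\|\Delta\partial_k\sigma\|_0$ from \eqref{2.46}. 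No gaps.
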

\begin{proof}
(1)   We can rewrite  $(\ref{1.7})_2$ as follows
\begin{align}\label{3.3} \kappa  (\partial_{1}\partial_{3}\sigma,\partial_{2}\partial_{3}\sigma,-\Delta_{\mm{h}} \sigma)^\top =
\nabla\mathcal{P} -\mu\Delta u+\rho (u_t+ u\cdot\nabla u)+  \nabla \sigma \Delta \sigma
 ,\end{align}
where $\mathcal{P}=\beta +\kappa   \partial_3\sigma$.
  Taking the inner product of the above identity and   $ (\partial_{1}\partial_{3}\sigma,\partial_{2}\partial_{3}\sigma,-\Delta_{\mm{h}} \sigma)^\top $  in $L^2$, and then using the integration by parts and the boundary conditions of $(\sigma,\partial_3\rho)$ in \eqref{2.1},  we have
\begin{align*}
&\kappa  \int|\partial_{1}\partial_{3}\sigma,\partial_{2}\partial_{3}\sigma,-\Delta_{\mm{h}} \sigma|^2{\rm d}x\\
=&  
 \int(   \nabla \sigma\Delta \sigma-\mu\Delta u+\rho( u_t+ u\cdot\nabla u))
\cdot(\partial_{1}\partial_{3}\sigma,\partial_{2}\partial_{3}\sigma,-\Delta_{\mm{h}} \sigma)^\top {\rm d}x .\end{align*}
We  easily deduce from the above identity that
\begin{align}
&\kappa   \|(\nabla_{\mm{h}} \partial_{3}\sigma,-\Delta_{\mm{h}} \sigma) \|_0
\nonumber 
\\ 
&\lesssim \|u\|_2+(1+\kappa^{-1}\|\sigma\|_2)(\|u_t\|_0+\|\nabla u\|_0 \|u\|_2)+\| \sigma\|_2 \| \sigma\|_3 , \nonumber  \end{align}
 which yields \eqref{202411281534}.
 
(2)  Applying $\mm{curl}$ to the momentum equation \eqref{1.7}$_2$, we can obtain the vortex equation 
\begin{align}\label{2.8}
&\kappa   \nabla^{\bot}\Delta \sigma=\rho(\omega_t+u\cdot\nabla\omega)-
 \mu\Delta\omega+{\mathbf{M}}+\mathbf{N},
\end{align}
where we have defined that
\begin{align*}
\begin{cases}
\mathbf{M}:=(-\partial_{t}u_2,\partial_{t}u_1,0)^{\top},\ \mathbf{N}:= \mathbf{N}^{\mm{m}}+\mathbf{N}^{\mm{c}}+  \mathbf{N}^{\mm{k}},\\
\mathbf{N}^{\mm{m}}:= (\partial_{2}\rho\partial_{t}u_3-\partial_{3}\rho\partial_{t}u_2,\partial_{3}\rho\partial_{t}u_1
-\partial_{1}\rho\partial_{t}u_3,\partial_{1}\rho\partial_{t}u_2-\partial_{2}\rho\partial_{t}u_1)^{\top},\\
\mathbf{N}^{\mm{c}}:=\big(\partial_{2}({\rho}u)\cdot\nabla u_3-\partial_{3}({\rho}u)\cdot\nabla u_2,
\partial_{3}({\rho}u)\cdot\nabla u_1-\partial_{1}({\rho}u)\cdot\nabla u_3,\\
\qquad\quad\partial_{1}({\rho}u)\cdot\nabla u_2-\partial_{2}({\rho}u)\cdot\nabla u_1\big)^{\top},\\ 	
\mathbf{N}^{\mm{k}}:= (\partial_{3}\sigma\partial_{2}\Delta \sigma-\partial_{2}\sigma\partial_{3}\Delta \sigma,\partial_{1}\sigma\partial_{3}\Delta \sigma
-\partial_{3}\sigma\partial_{1}\Delta \sigma,\partial_{2}\sigma\partial_{1}\Delta \sigma-\partial_{1}\sigma\partial_{2}\Delta \sigma)^{\top}.
\end{cases}	
\end{align*} 
Exploiting the definition of $\rho$ and the product estimates, we have 
\begin{align}
\|({\mathbf{M}}_{\mm{h}}, {\mathbf{N}}_{\mm{h}})\|_0
\lesssim&(1+\kappa  ^{-1}\| \sigma\|_2)\left(\|u_t\|_{1}+\|\nabla u\|_1\|  u\|_2\right)+\| \sigma\|_{1,2} \| \sigma\|_3
\label{2501241402}
\end{align}

Taking the inner product of the vortex equation \eqref{2.8} and $-  \nabla^{\bot}\Delta \sigma$ in $L^2$, and then using  H\"older inequality, the product estimate, \eqref{2.44} and \eqref{2501241402}, we get that
\begin{align*}
&\kappa  \| \Delta(\partial_2\sigma,\partial_1\sigma)\|_0^2\nonumber\\
\lesssim&  \int(\rho(\partial_{t}\omega _{\mm{h}}
+u\cdot\nabla\omega _{\mm{h}})-\mu{\Delta\omega_{\mm{h}}}+{\mathbf{M}}_{\mm{h}}+ {\mathbf{N}}_{\mm{h}})\cdot\nabla_{\mm{h}}^{\bot}\Delta \sigma\mm{d}x\nonumber\\
\lesssim& \| \sigma\|_{1,2}^2\| \sigma\|_3+\| \sigma\|_{1,2}\left(\|\nabla\Delta  u\|_0+(1+\kappa  ^{-1}\| \sigma\|_2)(\|u_t\|_{1}+ \|u\|_2 \|\nabla u\|_1)\right)
,
\end{align*}	
which, together with \eqref{2.46} with $s=1$ and $2$, yields \eqref{2.43}. 
This completes the proof.
\end{proof}
 
Finally we shall derive that  $\| \sigma_t\|_1$ and $\|u_t\|_{0}$ can be controlled by the norms of spatial derivatives of $(\sigma,u)$. 
\begin{lem}\label{lem2.7}
It holds that
\begin{align}
\| \sigma_t\|_1 \lesssim  \kappa  \|u_3 \|_{1}+\| \nabla \sigma \|_{1}\|u \|_2 \label{2saf.47}
\end{align}
and
\begin{align}
 \|u_t\|_{0}\lesssim &{\kappa }  \|(\Delta_{\mm{h}} \sigma ,\nabla_{\mm{h}} \partial_3\sigma )\|_{0}+(1+(1+\kappa^{-1}\|\sigma\|_2)\|u \|_1)\|u \|_2+ \| \nabla \sigma \|_{1} \| \nabla \sigma \|_{2} .\label{2.47}
\end{align}
\end{lem}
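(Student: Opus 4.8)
The plan is to read both bounds off the evolution equations for $\sigma$ and $u$, treating the time derivatives $\sigma_t$ and $u_t$ as the unknowns to be isolated and moving all remaining terms to the right-hand side. For the first estimate \eqref{2saf.47} this is immediate: I would solve the transport equation \eqref{1.7}$_1$ for the time derivative, $\sigma_t=-\kappa u_3-u\cdot\nabla\sigma$, take the $H^1$-norm, and apply the triangle inequality to get $\|\sigma_t\|_1\leqslant\kappa\|u_3\|_1+\|u\cdot\nabla\sigma\|_1$. The product $u\cdot\nabla\sigma$ is then controlled by the $H^1$ product estimate together with the embedding $H^2\hookrightarrow L^\infty$, giving $\|u\cdot\nabla\sigma\|_1\lesssim\|u\|_2\|\nabla\sigma\|_1$, which combined with the previous line is exactly \eqref{2saf.47}. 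This step is entirely routine.

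For the second estimate \eqref{2.47}, the crucial choice is to work with the reformulated momentum equation \eqref{3.3} rather than \eqref{1.7}$_2$ directly, since \eqref{3.3} has already absorbed the vertical capillary contribution $\kappa\partial_3^2\sigma$ into the modified pressure $\mathcal{P}=\beta+\kappa\partial_3\sigma$, leaving only the horizontal-type quantity $\kappa(\nabla_{\mm{h}}\partial_3\sigma,-\Delta_{\mm{h}}\sigma)$ on the left. I would solve \eqref{3.3} for $\rho u_t$ and take the $L^2$ inner product with $u_t$. The only delicate term is the pressure: integrating by parts, $\int\nabla\mathcal{P}\cdot u_t\,\mm{d}x=-\int\mathcal{P}\,\mm{div}\,u_t\,\mm{d}x+\int_{\partial\Omega}\mathcal{P}\,u_t\cdot\mathbf{n}\,\mm{d}S$, and both terms vanish because $\mm{div}\,u_t=0$ and $u_t\cdot\mathbf{n}=\partial_tu_3|_{\partial\Omega}=0$, the latter obtained by differentiating the boundary condition $u_3|_{\partial\Omega}=0$ in \eqref{20220202081737} in time.

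Once the pressure drops out, the lower bound $\rho\geqslant d$ from \eqref{202412051008} gives $d\|u_t\|_0^2\leqslant\int\rho|u_t|^2\,\mm{d}x\leqslant\|R\|_0\|u_t\|_0$, where $R$ collects the remaining right-hand terms $\mu\Delta u$, $-\rho u\cdot\nabla u$, $\kappa(\nabla_{\mm{h}}\partial_3\sigma,-\Delta_{\mm{h}}\sigma)$ and $-\Delta\sigma\nabla\sigma$; hence $\|u_t\|_0\lesssim\|R\|_0$. It then remains to estimate $\|R\|_0$ term by term: $\|\mu\Delta u\|_0\lesssim\|u\|_2$; for the convective term $\|\rho u\cdot\nabla u\|_0\lesssim(1+\kappa^{-1}\|\sigma\|_2)\|u\|_1\|u\|_2$, using \eqref{2.44} for $\|\rho\|_{L^\infty}$ and the embeddings $H^1\hookrightarrow L^4$, $H^2\hookrightarrow W^{1,4}$ to split $\|u\cdot\nabla u\|_0\leqslant\|u\|_{L^4}\|\nabla u\|_{L^4}$; the capillary term contributes exactly $\kappa\|(\Delta_{\mm{h}}\sigma,\nabla_{\mm{h}}\partial_3\sigma)\|_0$; and $\|\Delta\sigma\nabla\sigma\|_0\lesssim\|\nabla\sigma\|_2\|\nabla\sigma\|_1$ by placing $\nabla\sigma$ in $L^\infty$. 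Summing these contributions and regrouping gives the stated bound \eqref{2.47}.

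The main obstacle is conceptual rather than computational: one must recognise that the estimate can only close through the form \eqref{3.3}, since a direct use of \eqref{1.7}$_2$ would leave the full $\kappa\Delta\sigma\mathbf{e}^3$ on the right, whose $L^2$-norm $\kappa\|\Delta\sigma\|_0$ involves $\partial_3^2\sigma$ and is not controlled by the horizontal capillary quantities appearing in the energy and dissipation functionals. Hiding $\kappa\partial_3^2\sigma$ inside $\nabla\mathcal{P}$ and then annihilating the pressure against the divergence-free $u_t$ is precisely what leaves the admissible right-hand side $\kappa\|(\Delta_{\mm{h}}\sigma,\nabla_{\mm{h}}\partial_3\sigma)\|_0$.
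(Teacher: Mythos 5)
Your proposal is correct and follows essentially the same route as the paper: \eqref{2saf.47} is read off the transport equation \eqref{1.7}$_1$ with the product estimate, and \eqref{2.47} is obtained by testing the reformulated momentum equation \eqref{3.3} against $u_t$, killing the pressure via $\mm{div}\,u_t=0$ and $u_t\cdot\mathbf{n}|_{\partial\Omega}=0$, and using the lower bound $\rho\geqslant d$ together with H\"older and the product estimates. Your closing remark about why one must work with \eqref{3.3} rather than \eqref{1.7}$_2$ correctly identifies the key point of the argument.
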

\begin{proof} 
It is easy see from the product estimate and 
 $(\ref{1.7})_1$ that
\begin{align*}
\| \sigma_t \|_{1}\lesssim &\kappa  \|u_3 \|_{1}  + \|u \cdot\nabla \sigma \|_{1} 
\lesssim \kappa  \|u_3 \|_{1}+\| \nabla \sigma \|_{1} \|u \|_2, 
\end{align*}
which  yields \eqref{2saf.47}.

 It is well-known from the incompressible condition and the mass equation \eqref{1.1}$_1$ that
\begin{equation}
0<d\leqslant  \inf\limits_{x\in\Omega}\{\bar{\rho}(x)+\kappa  ^{-1 }\sigma^0\}\leqslant{\rho}(t,x)\mbox{ for any }(x,t)\in\Omega\times I_T, \label{2.441}
\end{equation} 
where $\rho=\bar{\rho} +\kappa  ^{-1 }\sigma$.
 Taking the inner product of \eqref{3.3} and  $u_t$ in $L^2$, and then using the integration by parts, and  the boundary condition of $u_3$ 
 in \eqref{20220202081737}, we get
\begin{align*}
\|\sqrt{\rho}u_t\|^2_0=&\int ((\mu \Delta u -    \nabla{\sigma}\Delta{\sigma} 
- {\rho} u\cdot\nabla u)\cdot u_t+\kappa   ( \partial_tu_{\mm{h}} \cdot\nabla_{\mm{h}}\partial_3 \sigma-\Delta_{\mm{h}} \sigma\partial_tu_3))\mm{d}x.
\end{align*}
Exploiting H\"older inequality, the product estimate, \eqref{2.44} with $i=0$ and \eqref{2.441}, we can get \eqref{2.47} from the above identity. This completes the proof.  
\end{proof}

\subsection{\emph{A priori} stability estimates}
Now we are in the position to building the total energy inequality \eqref{1.12} for the initial boundary value problem  \eqref{20220202081737}--\eqref{1.8x}. 
\begin{pro}[\emph{A priori} estimates]\label{lem3}
Let $(\sigma,u)$ be the solution of the initial-boundary value problem \eqref{20220202081737}--\eqref{1.8x} defined on $  \Omega\times I_T$ with the initial data
 $(\sigma^0,u^0)\in H^3_{\bar{\rho}}\times {\mathcal{H}^2_{\mathrm{s}}}$ satisfying \eqref{202412051008}.
Then there exist constants $c_1$, $c_2$ and $\chi$, where
$$c_1,\ c_2\geqslant 1\mbox{ and }\chi=  \max\{\kappa  ^{-1},c_2\} $$ 
 such that,  if $(\sigma,u)$ satisfies
\begin{align}
 \mathop{\rm sup}_{0\leqslant t< T}{ {E}}(t)\leqslant (2c_1\chi^9)^{-2},
 \label{2022412052137}
\end{align} 
  then the solution satisfies the following \emph{a priori} stability estimate
\begin{align}\label{1.saf12}
\mathop{\rm sup}_{0\leqslant t< T} \mathcal{E}(t)+\int_0^T\mathcal{D}(\tau)d\tau\leqslant 2c_1\chi^9E^0 .
            \end{align} 
\end{pro}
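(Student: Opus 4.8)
The plan is to run the standard three-tier energy argument: assemble the differential inequalities of Lemmas \ref{lem2.2}--\ref{lem2.7} into one weighted identity for a modified functional $\widetilde{\mathcal{E}}(t)$ that is equivalent to $\mathcal{E}(t)$, and show it obeys
\begin{align}
\frac{\mm{d}}{\mm{d}t}\widetilde{\mathcal{E}}(t)+\mathcal{D}(t)\lesssim \big(\sqrt{\mathcal{E}(t)}+\mathcal{E}(t)\big)\mathcal{D}(t),\nonumber
\end{align}
so that the smallness hypothesis \eqref{2022412052137} lets the right-hand side be absorbed into the dissipation, after which a time integration yields \eqref{1.saf12}. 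The first bookkeeping step is to match each lemma to the piece of $\mathcal{D}(t)=\|\sigma\|_{1,2}^2+\|(\sigma_t,\nabla u)\|_2^2+\|u_t\|_1^2$ it produces: \eqref{2.17} supplies $\mu\|\nabla\Delta u\|_0^2$ (hence $\|\nabla u\|_2^2$), \eqref{2.42} supplies $\mu\|\nabla u_t\|_0^2$, the two dissipative estimates \eqref{2.31} and \eqref{2.32} supply $\|\sqrt{\rho}u_t\|_0^2\gtrsim\|u_t\|_0^2$ and $\|\Delta\sigma_t\|_0^2$ (so that, with $\sigma_t|_{\partial\Omega}=0$ coming from $\eqref{1.7}_1$ and the boundary data of $(\sigma,u_3)$, Poincar\'e-type inequalities recover $\|\sigma_t\|_2^2$ and $\|u_t\|_1^2$), while the vortex-based bound \eqref{2.43} supplies the otherwise missing $\|\sigma\|_{1,2}^2$ in terms of already-controlled quantities.

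I would then form $\widetilde{\mathcal{E}}$ as a weighted sum of the energy parts of these identities \emph{plus} the auxiliary cross-term functionals $-\int\Delta\sigma\,\sigma_t\,\mm{d}x$ and $-\int\rho\,\Delta u\cdot u_t\,\mm{d}x$ appearing in \eqref{2.31}, \eqref{2.32}, \emph{plus} the sign-indefinite correction $-3\int(\kappa^{-1}-\tfrac{7}{2\kappa^2}\partial_3\sigma)\partial_3\sigma(\partial_3^3\sigma)^2\,\mm{d}x$ and its companion $(\partial_3\sigma\,\partial_3^3\sigma)^2$-term carried inside the time derivative of \eqref{2.17}. The auxiliary functionals must enter with successively smaller constants $0<\epsilon_3\ll\epsilon_2\ll\epsilon_1\ll1$, chosen hierarchically so that the dissipation each newly adjoined estimate contributes dominates the ``loss'' terms it returns — most notably the $\|\nabla\sigma_t\|_0^2$ produced on the right of \eqref{2.31}, which $\epsilon_2\|\Delta\sigma_t\|_0^2$ from \eqref{2.32} absorbs via Poincar\'e. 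The equivalence $\widetilde{\mathcal{E}}\approx\mathcal{E}$ then requires controlling the ``slaved'' energy pieces of $\mathcal{E}$: the term $\|\kappa(\Delta_{\mm{h}}\sigma,\nabla_{\mm{h}}\partial_3\sigma)\|_0^2$ through \eqref{202411281534}, and $\|\sigma_t\|_1^2$, $\|u_t\|_0^2$, $\|\kappa u_3\|_1^2$ through \eqref{2saf.47}, \eqref{2.47}; these are exactly where the powers of $\kappa^{-1}\le\chi$ accumulate, and tracking them is what produces the exponent $9$ in the final constant.

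For the absorption I would first note that the genuinely cubic right-hand terms (e.g. the second line of \eqref{2.17} and most of \eqref{2.32}, \eqref{2.42}) are of the form $\chi^a\sqrt{\mathcal{E}}\,\mathcal{D}$ or $\chi^a\mathcal{E}\,\mathcal{D}$ with $a$ bounded, since one factor is an energy norm ($\lesssim\sqrt{\mathcal{E}}\le (2c_1\chi^9)^{-1}$, small) while the remaining two are dissipation norms. The delicate point, and the step I expect to be the main obstacle, is the quadratic $O(1)$-coefficient contribution hidden in \eqref{2.26}, namely $\|\nabla u\|_1\|u_t\|_1$ (arising from $\int\partial_3 u_t\cdot\Delta u$ once $\bar{\rho}'=1$ is used), together with the large-$\kappa$-weighted energy directions that $\mathcal{D}$ does not see: these cannot be Young-absorbed into $\mathcal{D}$ with a small constant directly, and it is precisely here that the cross-term functionals and the twice-used transport identity behind \eqref{2.29} are needed, so that such contributions become time derivatives folded into $\widetilde{\mathcal{E}}$ plus remainders of the admissible $\sqrt{\mathcal{E}}\,\mathcal{D}$ form. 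In particular the correction terms must be shown to be a small fraction of $\|\nabla\Delta\sigma\|_0^2$ using $\|\partial_3\sigma\|_{L^\infty}\lesssim\|\sigma\|_3$ and \eqref{2022412052137}, so that $\widetilde{\mathcal{E}}$ genuinely dominates $\tfrac12\|\nabla\Delta\sigma\|_0^2$ and stays equivalent to $\mathcal{E}$.

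Finally I would integrate the resulting inequality $\frac{\mm{d}}{\mm{d}t}\widetilde{\mathcal{E}}+\tfrac12\mathcal{D}\le0$ over $(0,t)$, bound the initial value $\mathcal{E}(0)$ by a fixed $\chi$-power multiple of $E^0$ using Lemma \ref{lem2.7} at $t=0$ (so that $\|\sigma_t(0)\|_1$ and $\|u_t(0)\|_0$ are controlled by $E^0$), and invoke $\widetilde{\mathcal{E}}\approx\mathcal{E}$ to conclude $\mathcal{E}(t)+\int_0^t\mathcal{D}\,\mm{d}\tau\le 2c_1\chi^9E^0$ once $c_1$ and $c_2$ are fixed large enough to dominate all accumulated constants and $\chi$-powers. \textbf{The crux} is thus the simultaneous hierarchical choice of the weights $\epsilon_i$ and the exact tracking of the $\chi$-powers so that the final constant is $2c_1\chi^9$ rather than something larger.
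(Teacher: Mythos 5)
Your overall architecture is the paper's: a weighted combination of Lemmas \ref{lem2.2}--\ref{lem2.5sd}, a modified functional $\tilde{\mathcal{E}}$ carrying the cross terms and the sign-indefinite $(\partial_3^3\sigma)^2$-corrections, equivalence $\tilde{\mathcal{E}}\approx\mathcal{E}$ via the slaved estimates \eqref{202411281534}, \eqref{2saf.47}, \eqref{2.47}, absorption of the cubic terms by the smallness hypothesis, and integration in time. However, the central mechanism as you state it does not close: you assign \eqref{2.31} the weight $\epsilon_1$ and \eqref{2.32} the weight $\epsilon_2$ with $\epsilon_2\ll\epsilon_1$, and claim that $\epsilon_2\|\Delta\sigma_t\|_0^2$ absorbs the loss $\epsilon_1\|\nabla\sigma_t\|_0^2$ returned by \eqref{2.31}. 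Since \eqref{2.32} is the \emph{only} source of $\sigma_t$-dissipation in the whole scheme, this absorption forces $\epsilon_1\leqslant C\epsilon_2$ (after \eqref{2.46} with $s=t$), i.e.\ exactly the opposite ordering: the estimate whose dissipation must swallow another's loss has to enter with the \emph{larger} weight. The paper's hierarchy is accordingly increasing, $\chi^2$ on the main energy identities \eqref{2.16}, \eqref{2.17}, then $\chi^3$ on \eqref{2.31}, $\chi^4$ on \eqref{2.32} and $\chi^6$ on \eqref{2.42} (the gap from $4$ to $6$ is needed so that the quadratic loss $\|u_t\|_1\|\nabla u_t\|_0$ of \eqref{2.32} splits by Young's inequality into pieces strictly dominated by $\chi^3\|\sqrt{\rho}u_t\|_0^2$ and $\chi^6\|\nabla u_t\|_0^2$); moreover $\chi\geqslant\max\{\kappa^{-1},c_2\}$ must be large both to run this hierarchy and to dominate the $\kappa^{-1}$, $\kappa^{-2}$ error terms such as the $\kappa^{-2}(\|\nabla\Delta u\|_0^2+\|u_t\|_1^2)$ remainder coming from squaring \eqref{2.43}.

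A secondary misdiagnosis: the quadratic term $\|\nabla u\|_1\|u_t\|_1$ in \eqref{2.26} is a product of two dissipation norms, so it \emph{is} Young-absorbed once the weights are hierarchical ($\chi^2$ versus $\chi^3\|\nabla u\|_2^2$ and $\chi^4\|u_t\|_1^2$); it has nothing to do with the twice-used transport identity. That device, i.e.\ \eqref{2.29}, is needed for a different reason: the cubic integral $\int(\partial_3^3\sigma)^2\partial_3u_3\,\mathrm{d}x$ contains two factors of $\partial_3^3\sigma$, for which no dissipation exists at all, so it cannot be written in the admissible $\sqrt{E}\,\mathcal{D}$ form until the transport equation trades those factors for time derivatives (folded into $\tilde{\mathcal{E}}$) plus horizontally-differentiated quantities controlled by $\|\sigma\|_{1,2}$. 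With the hierarchy reversed and these two roles kept separate, your plan coincides with the paper's proof.
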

\begin{proof}
Let  
\begin{align}\label{1af12}
\mathop{\rm sup}_{0\leqslant t< T}{ {E}}(t)\leqslant \delta^2\leqslant 1.
 \end{align}  In view of \eqref{2saf.47} and \eqref{2.47}, it is easy to see that 
\begin{align}
\label{202412052151}
\mathcal{E}(t)\lesssim  (1+\kappa^{-1}\|\sigma\|_2)E(t)\mbox{ for any }t\in [0,T),
\end{align}
where $\delta$ will be defined by \eqref{2.511}.
In addition, we can derive from $(\ref{1.7})_1$ and \eqref{1af12} that
	\begin{align}
\kappa  \| u_3\|_{1}\lesssim  \|\sigma_t\|_{1}+\|u\cdot\nabla \sigma\|_{1} 
\lesssim  \|\sigma_t\|_{1}+\|\sigma\|_2\|u\|_2
\lesssim \|\sigma\|_2+ \|\sigma_t\|_{1}   .
\label{2501241201}
\end{align}

Exploiting Young's inequality, \eqref{2saf.47}, \eqref{1af12}, \eqref{20240129182211} with $i=1$ and \eqref{2.46} with $s=t$, we derive from Lemmas \ref{lem2.2}--\ref{lem2.4} and \eqref{2.43} that, for sufficiently large positive constant $\chi\geqslant 1$ (independent of $\kappa  $),
\begin{align}
&\chi\frac{\mm{d}}{\mm{d}t}\tilde{\mathcal{E}}(t)+c\tilde{\mathcal{D}}(t)
\lesssim\left(\chi^3\left((\chi^4(1+\kappa  ^{-1}) +\kappa  ^{-2})\right)+\kappa^{-4}\right) {E}^{1/2}(t)
\mathcal{D}(t)\nonumber\\
&\qquad\qquad \qquad \qquad +\kappa  ^{-2} (\|\nabla \Delta u\|_0^2+\|u_t\|_1^2 ),\label{2.49}
\end{align}
 where we have defined that
\begin{align*}
\tilde{\mathcal{E}}(t):=&\frac{\chi^2}{2}\left(\|( \nabla   \sigma, \nabla \Delta \sigma)\|^2+\|\sqrt{\rho}( 
 u,\Delta u)\|^2_0\right)
-3\chi^2\int\left(\kappa ^{-1}-\frac{7}{2\kappa^2}\partial_3\sigma \right)\partial_3\sigma(\partial_3^3 \sigma)^2\mm{d}x\\
&+\chi^3\left(\frac{\mu}{2}\|\nabla u\|^2_0
- \int \Delta \sigma\sigma_t\mm{d}x\right)+\chi^4\left(\frac{\mu}{2}\|\Delta u\|_0^2
-\int\rho\Delta u \cdot u_t\mm{d}x\right)+\chi^6\|( \nabla \sigma_t,\sqrt{{\rho}}u_t)\|^2_{0}),\\
\tilde{\mathcal{D}}(t):=&\| \sigma\|_{1,2}^2+\chi(\chi^2\|\nabla u\|^2_2 +\chi^4\| \sigma_t\|_2^2 +\chi^3\| u_t\|^2_1).
\end{align*}
In addition, making use of  Young's inequality, the definitions of $\mathcal{E}(t)$, $\mathcal{D}(t)$, \eqref{2.44} with $i=0$, \eqref{202411281534}, \eqref{2.441}, \eqref{1af12}, \eqref{2501241201}, 
Poincar\'e-type inequality \eqref{3}, \eqref{20240129182211} with $i=0$ and \eqref{2.52}, we have,  for sufficiently large positive constant $\chi$,
\begin{align}&{\mathcal{E}}(t)\lesssim  \tilde{\mathcal{E}}(t)+c\chi^2\left(\chi^2\kappa^{-1}+\kappa^{-2}  \right)\|\sigma\|_3{\mathcal{E}}(t) ,\label{202as7}\\
& \tilde{\mathcal{E}}(t)\lesssim\left( \chi^6(1+ \kappa^{-1})+\chi^2  \kappa^{-2}\right){\mathcal{E}}(t) ,\label{202asd412041807}
\\
&\mbox{and } {\mathcal{D}}(t)\lesssim \tilde{\mathcal{D}}(t).
\label{202412041807}
\end{align}

It is easy to see from \eqref{2.49} and \eqref{202412041807} that there exists $c_2\geqslant 1$ such that the following estimate holds for any $\chi\geqslant  \max\{\kappa  ^{-1},c_2\}$:
\begin{align}\label{2.50}
	\chi\frac{\mm{d}}{\mm{d}t}\tilde{\mathcal{E}}(t)+c{\mathcal{D}}(t) 
\lesssim \chi^8  {E}^{1/2}(t) \mathcal{D}(t).
\end{align}Integrating the above inequality over $(0,t)$ we obtain 
\begin{align*} 
\chi\tilde{\mathcal{E}}(t)+\int_0^t{\mathcal{D}}(\tau)\mm{d}\tau
\leqslant &\chi\tilde{\mathcal{E}}(t) |_{t=0}+ \chi^8 \sup_{0\leqslant \tau\leqslant t}{E}^{1/2}(\tau) \int_0^t\mathcal{D}(\tau)\mm{d}\tau,
\end{align*}
which, together with \eqref{202412052151}, \eqref{202as7} and \eqref{202asd412041807}, implies
\begin{align}\label{2.51}
{\mathcal{E}}(t)+\int_0^t{\mathcal{D}}(\tau)\mm{d}\tau
\leqslant & c_1\chi^9\left(E^0+\|\sigma\|_3\mathcal{E}(t) +\sup_{0\leqslant \tau\leqslant t}{E}^{1/2}(\tau) \int_0^t\mathcal{D}(\tau)\mm{d}\tau\right).
\end{align}
If  
\begin{align}\label{2.511}\delta:= (2c_1\chi^9)^{-1} <1,\end{align}
we can derive from \eqref{1af12} and \eqref{2.51} that
\begin{align}\label{2saf.51}
{\mathcal{E}}(t)+\int_0^t{\mathcal{D}}(\tau)\mm{d}\tau
\leqslant 2c_1\chi^9E^0 ,
\end{align}which yields \eqref{1.saf12}. This completes the proof. 
\end{proof}

\section{Proof of Theorem \ref{thm2}}

Now we introduce the local(-in-time) well-posedness of the initial-boundary value problem \eqref{20220202081737}--\eqref{1.8x} for any fixed $\kappa  $.
\begin{pro}\label{202102182115}
Let $\mu$, $\kappa  $ be given positive constants, $\bar{\rho}$ be defined by \eqref{202501232128}, and $(\sigma^0,u^0 )\in   H^3_{\bar{\rho}}\times {\mathcal{H}^2_{\mathrm{s}}}$ satisfy \eqref{202412051008} . Then there exists  $T^{\max}>0$ such that the initial-boundary value problem \eqref{20220202081737}--\eqref{1.8x} admits a unique local(-in-time) strong solution $(\varrho,v)$ defined on $\Omega\times [0,T^{\max})$ with an associated pressure $\beta$. Moreover
\begin{itemize}
\item  $(\sigma,v,\nabla\beta)\in{\mathfrak{P}} _{T}\times { \mathcal{V}_{T} }\times C^0(\overline{I_{T}}, L^2)$ for any $T \in I_{T^{\max}}$,  and \begin{equation*}
0<\inf\limits_{x\in\Omega}\big\{{\rho}^{0}(x)\big\}\leqslant {\rho}(t,x)\leqslant \sup\limits_{x\in\Omega}\big\{{\rho}^{0}(x)\big\}\mbox{ for any }(t,x)\in \Omega\times I_{T^{\max}},
\end{equation*}
where $\rho^0:=\varrho^0+\kappa  ^{-1 }\bar{\rho}$. 
  \item $\limsup_{t\to T^{\max} }\| (\nabla \sigma ,v)( t)\|_2=\infty$  if  $T^{\max}<\infty$.
\end{itemize} 
\end{pro}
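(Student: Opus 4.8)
The plan is to construct the solution by a linearized iteration scheme equipped with uniform short-time energy bounds, and then to obtain the blow-up criterion by a continuation argument. Since the momentum equation \eqref{1.7}$_2$ is parabolic in $u$ while the density equation is a pure transport, it is natural to iterate as follows. Given the previous iterate $(\sigma^{k},u^{k})$, first solve the \emph{linear} transport problem $\sigma^{k+1}_t+u^{k}\cdot\nabla\sigma^{k+1}+\kappa u^{k}_3=0$ with $\sigma^{k+1}|_{t=0}=\sigma^0$; equivalently, since $\rho=\bar{\rho}+\sigma/\kappa$ and $\mm{div}u^{k}=0$, one transports the total density by $\rho^{k+1}_t+u^{k}\cdot\nabla\rho^{k+1}=0$. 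Then, setting $\rho^{k+1}=\bar{\rho}+\sigma^{k+1}/\kappa$, solve the \emph{linear} variable-density Stokes-type problem $\rho^{k+1}(u^{k+1}_t+u^{k}\cdot\nabla u^{k+1})+\nabla\beta^{k+1}=\mu\Delta u^{k+1}-\Delta\sigma^{k+1}(\kappa\mathbf{e}^3+\nabla\sigma^{k+1})$ together with $\mm{div}u^{k+1}=0$ and the Navier conditions \eqref{20220202081737}, recovering $\nabla\beta^{k+1}$ from the elliptic (Neumann) problem obtained by taking the divergence of the momentum equation.

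The first technical step is to produce uniform-in-$k$ bounds of the iterates in $\mathfrak{P}_T\times\mathcal{V}_T$ on a time $T$ depending only on the data. For the transport step one uses that transport preserves $H^3$ with only linear-in-time growth (via commutator/product estimates such as \eqref{4}), and, crucially, that the flow of $u^{k}$ preserves $\partial\Omega$ because $u^{k}_3|_{\partial\Omega}=0$; together with $\partial_3 u^{k}_{\mm{h}}|_{\partial\Omega}=0$ this propagates the inhomogeneous boundary conditions defining $H^3_{\bar{\rho}}$, so that $\sigma^{k+1}\in H^3_{\bar{\rho}}$ for all $t$. For the velocity step one applies energy and parabolic-regularity estimates to the linear momentum equation, controlling the capillary forcing $\Delta\sigma^{k+1}(\kappa\mathbf{e}^3+\nabla\sigma^{k+1})$ in $L^2(I_T,H^1)$ by the $H^3$-norm of $\sigma^{k+1}$, and treating the pressure through the elliptic estimate. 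The density bounds in the statement then follow from the transport maximum principle: since $\rho$ is constant along the (measure-preserving) flow of $u$, its pointwise range equals that of $\rho^0$, giving $\inf\rho^0\leqslant\rho(t,\cdot)\leqslant\sup\rho^0$.

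Next I would show that, after possibly shrinking $T$, the map $(\sigma^{k},u^{k})\mapsto(\sigma^{k+1},u^{k+1})$ is a contraction in a weaker metric — for the differences one estimates $\delta\sigma$ in $L^\infty(I_T,H^1)$ and $\delta u$ in $L^\infty(I_T,L^2)\cap L^2(I_T,H^1)$, the lowest norms in which the transport step loses no regularity — so that the iterates converge. The limit $(\sigma,u)$ is a strong solution, and its full regularity $(\sigma,u,\nabla\beta)\in\mathfrak{P}_T\times\mathcal{V}_T\times C^0(\overline{I_T},L^2)$ is recovered from the uniform higher-order bounds by weak-$*$ lower semicontinuity, with the temporal continuity obtained via interpolation. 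Uniqueness follows from the same difference estimate applied directly to two solutions and Gronwall's inequality.

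Finally, the blow-up criterion is obtained by continuation. Defining $T^{\max}$ as the supremum of existence times, the key point is that the length of the local existence interval constructed above can be bounded below in terms of the (time-preserved) density bounds and the norm $\|(\nabla\sigma,v)\|_2$ alone. Hence if $T^{\max}<\infty$ while $\limsup_{t\to T^{\max}}\|(\nabla\sigma,v)(t)\|_2<\infty$, one could restart the construction from a time close to $T^{\max}$ and extend the solution beyond $T^{\max}$, a contradiction. The main obstacle throughout is the hyperbolic–parabolic coupling: the transport equation provides no smoothing, so the top-order estimates for $\sigma$ in $H^3$ must be closed \emph{exactly} while simultaneously propagating the inhomogeneous boundary conditions of $H^3_{\bar{\rho}}$ and absorbing the strong capillary nonlinearity $\Delta\sigma\,\nabla\sigma$, which sits at the highest available regularity; reconciling this with the divergence-free constraint and the compatible Neumann data for the pressure is the delicate part of the argument.
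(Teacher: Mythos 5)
Your proposal is correct and follows essentially the same route as the paper, which omits the detailed argument and simply invokes ``the standard iteration method as in \cite[Theorem 1]{TW2010}'': a linearized transport/Stokes iteration with uniform short-time bounds, contraction in lower norms, the transport maximum principle for the density bounds, and a continuation argument for the blow-up criterion. Your sketch is a reasonable expansion of exactly that scheme, including the propagation of the boundary conditions of $H^3_{\bar{\rho}}$, which the paper establishes separately in Lemma \ref{lem2.1}.
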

\begin{pf}Since Proposition \ref{202102182115} can be easily proved by the standard iteration method as in \cite[Theorem 1]{TW2010}, we omit the trivial proof.  \hfill $\Box$ \end{pf}

Thanks to the \emph{a priori} stability estimate \eqref{1.saf12} in Proposition \ref{lem3}, we can easily establish the global solvability
in Theorem \ref{thm2} based on the local solvability
in Proposition \ref{202102182115}. Next, we briefly describe the proof for the readers' convenience.

Assume that $(\sigma^0,v^0)$ satisfies \eqref{202412051008} and \eqref{2022412051007}, where $c_1$ and $c_2$ are provided by Proposition \ref{lem3}.
In view of Proposition \ref{202102182115}, there exists a unique local strong solution $(\varrho,v, \beta)$ to  the initial-boundary value problem \eqref{20220202081737}--\eqref{1.8x}  with the maximal existence time $T^{\max}$. 

By the regularity of $(\varrho,v,\beta)$, we can verify that  $(\varrho,v)$ satisfies stability estimate \eqref{1.saf12} in Proposition \ref{lem3}, i.e.
\begin{align}
	\label{omessestsimQ}
 \sup_{0\leqslant  t<  T }{ \mathcal{E} (t)   } +\int_0^{T }  \mathcal{D} (t) \mm{d}t \leqslant  2c_1\chi^9E^0  ,
\end{align}
if
$$\sup_{0\leqslant  t<  T }E(t)\leqslant (2c_1\chi^9)^{-2}\mbox{ for }T\in I_{T^{\max}}.$$

Let
\begin{align}
T^{*}=&\sup \left\{\tau\in I_{T^{\max}}~ \left|~  E(t) \leqslant  (3c_1\chi^9)^{-2}  \mbox{ for any }
\ t\leqslant\tau \right.\right\}.\nonumber
\end{align}
Then, we easily see that the definition of $T^*$ makes sense by the fact
\begin{align}\label{2022412061417}
E^0\leqslant (3c_1 \chi^9)^{-3}< (2c_1\chi^9)^{-2} .\end{align}Thus, to show the existence of a global strong solution, it suffices to verify $T^*=\infty$. We shall prove this by contradiction below.

Assume $T^*<\infty$, then by and the definition of $T^*$ and Proposition \ref{202102182115}, we have
\begin{align}T^*\in I_{T^{\max}}. \label{20222022519850}
\end{align}
Noting that
\begin{equation*}
\sup_{0\leqslant  t<    {T^{*}}}  E(t)   \leqslant   (2c_1\chi^9)^{-2},
\end{equation*}
 then, by \eqref{omessestsimQ} with $T=T^*$ and \eqref{2022412061417}, we have
\begin{align*}
 \sup_{0\leqslant  t<  T^*}{ \mathcal{E} (t)   } +\int_0^{T^*}  \mathcal{D} (t) \mm{d}t \leqslant  2c_1\chi^9E^0 \leqslant \frac{2}{27( c_1\chi^9)^{2}}.
\end{align*}
In particular,
\begin{align}
\sup_{0\leqslant  t< {T^{*}}} E(t)  \leqslant  \frac{8}{27(2c_1\chi^9)^2}< (2c_1\chi^9)^{-2}. \label{2020103261534}
\end{align}
Making use of \eqref{20222022519850}, \eqref{2020103261534} and the strong continuity $(\varrho,v)\in C^0([0,T^{\max}), H^3\times H^2)$, we deduce that
  there is a  constant $\tilde{T}\in (T^*,T^{\max})$, such that
\begin{align}
\sup_{0\leqslant  t\leqslant  {\tilde{T} }}E(t) \leqslant  (2c_1\chi^9)^{-2}, \nonumber
\end{align}
which contradicts with the definition of $T^*$. Hence, $T^*=\infty$, which implies $T^{\max}=\infty$.
This completes the proof of the existence of a global solution. The uniqueness of the global solution is obvious due to the uniqueness result of local solutions in Proposition \ref{202102182115}.
In addition, exploiting  the product estimates  and \eqref{1.12}, we can easily derive \eqref{202412061425} from \eqref{3.3}. 

To complete the proof of Theorem \ref{thm2}, we shall verify \eqref{1sdaf2}.
To this purpose, applying $\mm{curl}$ to the momentum equation \eqref{3.3}$_1$ yields 
  \begin{align} 
 \kappa  \mm{curl}(\partial_{1}\partial_{3}\sigma,\partial_{2}\partial_{3}\sigma,-\Delta_{\mm{h}} \sigma)^\top
=\mm{curl}(\rho (u_t+u\cdot\nabla u)-\mu\Delta  u+  \nabla \sigma \Delta \sigma).
\end{align}
Applying $\|\cdot\|_{L^2( I_T ,L^2)}$ to the above identity,  and then using the product estimate and \eqref{1.12}, we can estimate that
\begin{align}
&\| \kappa  \mm{curl}(\partial_{1}\partial_{3}\sigma,\partial_{2}\partial_{3}\sigma,-\Delta_{\mm{h}} \sigma)^\top\|_{L^2( I_T ,L^2)}\nonumber \\
&\lesssim  \|\mm{curl}(\rho(u_t+ u\cdot\nabla u) -\mu\Delta u+  \nabla \sigma \Delta \sigma)\|_{L^2( I_T ,L^2)}\nonumber \\
& \lesssim \sqrt{c_1\chi^9E^0} (1+\chi+\sqrt{T}) \label{3saf.3} .
\end{align}
Noting that $\mm{div}(\partial_{1}\partial_{3}\sigma,\partial_{2}\partial_{3}\sigma,-\Delta_{\mm{h}} \sigma)^\top=0 $,
we can derive from  \eqref{1.12}, \eqref{202411281534}, \eqref{3saf.3} and Lemma \ref{A.5} that 
\begin{align}
 \| \kappa   (\partial_{1}\partial_{3}\sigma,\partial_{2}\partial_{3}\sigma,-\Delta_{\mm{h}} \sigma)\|_{L^2( I_T ,H^1)}\lesssim\sqrt{c_1\chi^9E^0} (1+\chi+\sqrt{T}) .\label{3sasaf.3}
\end{align}
Thanks to  \eqref{1.12} and \eqref{3sasaf.3}, we easily deduce from \eqref{3.3} that 
\begin{align}
 \| \nabla\mathcal{P} \|_{L^2( I_T ,H^1)}\lesssim \sqrt{c_1\chi^9E^0} (1+\chi+\sqrt{T}),\nonumber 
\end{align}  
which, together with \eqref{3sasaf.3}, yields \eqref{1sdaf2}. This completes the proof of Theorem \ref{thm2}.

\section{Proof of Theorem \ref{thm3}}\label{subsec:04}
This section is devoted to the proof of the asymptotic behavior of solutions stated  in Theorem \ref{thm3}. 
For the simplicity, we denote $(\varrho^\kappa,\sigma^\kappa, u^\kappa  ) $ by $(\varrho ,\sigma, u)$, where $\varrho^\kappa=\kappa^{-1}\sigma^\kappa$.

Let $T>0$ and $p\geqslant 1$ be arbitrary given. 
 Making use of \eqref{1.12}--\eqref{1sdaf2}, Aubin--Lions theorem (see Theorem \ref{A.7}), 
Arzel\'a--Ascoli theorem (see Theorem \ref{A.10}) and Banach--Alaoglu theorem (see Theorem \ref{asdA.7}), there exits a sequence (not relabeled) such that, for $\kappa  \to\infty$, 
\begin{align}
&u\rightarrow\tilde{u}\mbox{ weakly* in } L^{\infty}(I_T,{\mathcal{H}^2_{\mathrm{s}}})\mbox{ and weakly in } L^2(I_T,H^3) \mbox{ with }\tilde{u}_3=0,\nonumber \\
&u\rightarrow{\tilde{u}}\mbox{ strongly in }L^{p}(I_T,H^2_{\mm{loc}})\cap C^0(\overline{I_T},H^1_{\mm{loc}})\mbox{ with }\tilde{u}_{\mm{h}}|_{t=0}=w^0_{\mm{h}},\label{3.9}\\ 
& \sigma\rightarrow\tilde{\varpi}\mbox{ weakly* in } L^{\infty}(I_T,H^3_0)\mbox{ and }\mbox{ strongly  in } C^0(\overline{I_T},H^2_{\mm{loc}}),\label{3.11}\\
& u_t\rightarrow \tilde{u}_t\mbox{ weakly* in } L^{\infty}(I_T,L^{2})\mbox{ and weakly in } L^2(I_T,\mathcal{H}^1), 
\label{202411261907}
\\
&(\varrho,\varrho_t)= \kappa^{-1}(\sigma,\sigma_t)\rightarrow (0,0)\mbox{ strongly in }L^{\infty}(I_T,H^3)\times L^2(I_T,{ {H}^2 }),\nonumber  \\  
&\kappa   (\nabla_{\mm{h}}\partial_3\sigma^\top,-\Delta_{\rm h} \sigma)^\top\rightarrow  \mathcal{N}  \mbox{ weakly* in } L^{\infty}(I_T,L^2)\mbox{ and weakly in }L^2( I_T , H^1_0),\label{3.6}\\
& (\nabla_{\mm{h}}\partial_3\sigma,-\Delta_{\rm h} \sigma)^\top\rightarrow 0\mbox{ strongly in } L^{\infty}(I_T,L^2),\label{3x6}\\
&\nabla\mathcal{P}\rightarrow \tilde{N}\mbox{ weakly* in } L^{\infty}(I_T,L^{2})\mbox{ and weakly in } L^2(I_T,H^1)\label{3.7};
\end{align}
moreover, for any $\chi \in C^\infty_0(I_T)$, for any $\eta \in C^\infty_0(0,h)$, for any  $\phi\in C^\infty_0(\Omega)$, for any $(\psi_1,\psi_2)^\top \in C^\infty_0(\Omega)$ satisfying $\partial_1\psi_1+\partial_2\psi_2=0$ and for any $\varphi\in C^\infty_0(\Omega)$ satisfying $\mm{div} \varphi=0$,
\begin{align}
&\int_0^T\int (\mathcal{N}_{\rm h}^\top,\mathcal{N}_3)^\top\cdot \nabla\phi\mm{d}x\chi\mm{d}\tau=0,\label{3.61}\\
&\int_0^T\int_0^h\int_{\mathbb{R}^2} (\mathcal{N}_{1} \psi_1+\mathcal{N}_{2} \psi_2)\mm{d}x_{\mm{h}}\eta\mm{d}x_3\chi\mm{d}\tau=0,  \\
&\int_0^T \int \tilde{N}\cdot\varphi\mm{d}x\chi\mm{d}\tau=0  .  \label{3.611}
\end{align} 

We can further derive from \eqref{3.61}--\eqref{3.611} that, for a.e. $t\in  I_T $,
\begin{align}
&\mm{div}(\mathcal{N}_{\rm h}^\top,\mathcal{N}_3)^\top=0  , \label{202411281345} \\
&\int_{\mathbb{R}^2} \mathcal{N}_{\mm{h}}\cdot  \psi \mm{d}x_{\mathrm{h}}=0\mbox{ for a.e. } x_3\in (0,h),  \label{3.7saf} \\
& \int \tilde{N}\cdot\varphi\mm{d}x=0 \label{3.7x}.
\end{align}
In view of  \eqref{3.7saf},  \eqref{3.7x} and Lemma \ref{A.8}, we have 
\begin{align} 
&  \tilde{N}=\nabla \mathcal{Q}\mbox{ for some }\mathcal{Q}\in L^2_{\mm{loc}} ,\label{202411126sd1859}\\
& \mathcal{N}_{\mathrm{h}}=\nabla_{\mathrm{h}} \mathcal{M}\mbox{ for some }\mathcal{M}\in L^2_{\mm{loc}}(\mathbb{R}^2)\mbox{ for a.e. } x_3\in (0,h) ,\label{2024111261859}
\end{align}
which, together with \eqref{202411281345}, yields 
\begin{align}
-\Delta_{\mm{h}}\mathcal{M}=\partial_3 \mathcal{N}_3.
\label{2022411281348}
\end{align} 
 
 In addition, it is easy to see from \eqref{3.11} and \eqref{3x6} that
 \begin{align}\label{3.20}
\Delta_{\mm{h}}\tilde{\varpi}=0 ,
\end{align}
which, together with the regularity of $\tilde{\varpi}\in L^\infty(I_T,H^3)$, imply
\begin{align} 
\tilde{\varpi}=0. 
\label{2501252312}
\end{align}  
 
By virtue of the product estimate and \eqref{1.12}, it is easily to  see that 
$$\| \sigma u_t\|_{L^{\infty}(I_T,L^{2})}+\| \sigma u\cdot\nabla u\|_{L^{\infty}(I_T,H^{1})} \lesssim 1.$$
Thus it holds that 
\begin{align}
&\kappa  ^{-1 }\sigma u_t \to 0 \mbox{ strongly in } L^{\infty}(I_T,L^{2}), \label{202411261110} \\
&\kappa  ^{-1 }\sigma u\cdot\nabla u \to 0 \mbox{ stronly in } L^{\infty}(I_T,H^{1}). \label{2024112611101}
\end{align}
Making use of \eqref{3.9}, \eqref{3.11},  \eqref{202411261907},  \eqref{2501252312}--\eqref{2024112611101} and 
\eqref{11x}, one has
\begin{align}& {\rho}u_t\to \bar{\rho}\tilde{u}_t\mbox{ weakly in } L^{\infty}(I_T,L^{2}), \label{2026fsda111}\\& {\rho} {u}\cdot\nabla{u}\to\bar{\rho}\tilde{u}\cdot\nabla\tilde{u}\mbox{ strongly in } L^{p}(I_T,L^{3}_{\mm{loc}}), \label{2026111}\\
&  \nabla \sigma\Delta \sigma\to  0\mbox{ strongly in } C^0(\overline{I_T},L^{3/2}_{\mm{loc}}). \label{2022411126111}
 \end{align}

 Let $\tilde{\beta }=\mathcal{Q}-\mathcal{M}$.
Making use of \eqref{3.9}, \eqref{3.6}, 
\eqref{3.7}, \eqref{202411126sd1859},  \eqref{2024111261859} and  \eqref{2026fsda111} --\eqref{2022411126111}, we easily derive from \eqref{1.7} with $(\nabla\partial_3\sigma-(\nabla_{\mm{h}}\partial_3\sigma^\top,-\Delta_{\mm{h}} \sigma))^\top$ in place of $\Delta \sigma \mathbf{e}^3$ that 
\begin{align}\label{3.22}
\begin{cases}
 \bar{\rho}(\partial_t\tilde{u}_{\mathrm{h}}+\tilde{u}_{\mm{h}}\cdot\nabla\tilde{u}_{\mm{h}})+
 \nabla_{\mm{h}}\tilde{\beta }-\mu\Delta\tilde{u}_{\mm{h}}
  =0 ,\\
 \mm{div}_{\mm{h}}\tilde{u}_{\mm{h}}=0 
\end{cases}
\end{align} 
and
\begin{align}
\label{202411281349}
\mathcal{N}_3 =\partial_3\mathcal{Q},\end{align} 
which, together with \eqref{2022411281348}, yields 
\begin{align}
-\Delta_{\mm{h}}\mathcal{M}=\partial_3^2 \mathcal{Q}.
\label{202241128134saf8}
\end{align} 
In addition, applying $\mm{div}_{\mm{h}}$ to \eqref{3.22}$_1$, and then using \eqref{3.22}$_2$, we arrive at 
\begin{align} 
- \Delta \mathcal{Q}=
\bar{\rho}\nabla {\tilde{u}}_{\mm{h}}:\nabla_{\mm{h}}u_{\mm{h}}.
\end{align}

Now let $\bar{u}_{\mm{h}}$, $\bar{\mathcal{Q}}$, $\bar{\mathcal{M}}$ and $\bar{\mathcal{N}}_3$ enjoy the same regularity as well as    $\tilde{u}_{\mm{h}}$, ${\mathcal{Q}}$, ${\mathcal{M}}$ and ${\mathcal{N}}_3$, and satisfy the following initial-boundary value problem
\begin{align}
&\begin{cases}
\bar{\rho}(\partial_t{\bar{u}}_{\mm{h}}+{\bar{u}}_{\mm{h}}\cdot\nabla_{\mm{h}} {\bar{u}}_{\mm{h}})+\nabla_{\rm h}(\bar{\mathcal{Q}}-\bar{\mathcal{M}})-\mu\Delta{\bar{u}}_{\mm{h}}
=0,\\ 
\mm{div}_{\mm{h}}{\bar{u}}_{\mm{h}}=0, \\
\bar{u}_{\mm{h}}|_{t=0}=w_{\mm{h}}^0,\\
- \Delta\bar{\mathcal{Q}}=
\bar{\rho}\nabla {\bar{u}}_{\mm{h}}:\nabla_{\mm{h}}\bar{u}_{\mm{h}},\ \partial_3\bar{\mathcal{Q}}|_{\partial\Omega}=0,\\
-\Delta_{\mm{h}} \bar{\mathcal{M}}=\partial_3^2\bar{\mathcal{Q}},\ 
\bar{\mathcal{N}}_3 =\partial_3\bar{\mathcal{Q}}, 
\end{cases}
\end{align} 
then it is easy to check that 
 $$(\bar{u}_{\mm{h}}, \nabla  \bar{\mathcal{Q}}, \nabla_{\mm{h}} \bar{\mathcal{M}},\bar{\mathcal{N}}_3)
 =(\tilde{u}_{\mm{h}}, \nabla  {\mathcal{Q}}, \nabla_{\mm{h}} {\mathcal{M}},{\mathcal{N}}_3).$$ 
The uniqueness mentioned above means  that any sequence of 
$$\{(u,\nabla \mathcal{P},\kappa  \partial_1\partial_3\sigma,\kappa   \partial_2\partial_3\sigma, -\kappa  \Delta_{\rm h} \sigma) \}_{\kappa  > 0}$$ converges to the  limit function  $({\tilde{u}}, \nabla  \mathcal{Q}, \partial_1  \mathcal{M}, \partial_2  \mathcal{M},\mathcal{N}_3)$ is independent of choosing the sequences of solutions.  This completes the proof of Theorem \ref{thm3}.

\appendix
\section{Analysis tools}\label{sec:04}
\renewcommand\thesection{A}
This appendix is devoted to providing some mathematical results, which have been used in previous sections. In addition, $a\lesssim b$ still denotes $a\leqslant cb$ where the positive constant $c$ depends on the parameters and the domain in the lemmas in which $c$ appears.

\begin{lem}\label{A.1}
Embedding inequality \cite[Theorem 4.12]{Adams1975}: Let $D\subset \mathbb{R}^3$ be a domain satisfying the cone condition. It holds that 
\begin{align}\label{1}
\|f\|_{C^0(\overline{D})} \lesssim\|f\|_{H^{2}(D)} 
\end{align} 
for any  $f\in H^2(D)$   (after possibly being redefined on a set of measure zero), and 
\begin{align}\label{11x}
\|\phi\|_{L^6(D)} \lesssim\|\phi\|_{H^{1}(D)}\mbox{  for any }\phi\in H^1(D).\end{align}
\end{lem}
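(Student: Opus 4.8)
The plan is to read both displayed inequalities as special cases of the Sobolev imbedding theorem in dimension $n=3$ and to verify the relevant exponent conditions; the statement then follows at once from \cite[Theorem 4.12]{Adams1975}, which establishes exactly these imbeddings for any domain satisfying the cone condition. For completeness I would also indicate how to reconstruct the two bounds from the two classical building blocks, the Gagliardo--Nirenberg--Sobolev inequality (for the gain of integrability) and Morrey's inequality (for the gain of continuity once the integrability exponent exceeds the dimension).

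First I would treat \eqref{11x}. The Sobolev conjugate of $p=2$ in dimension $n=3$ is $2^{*}=2n/(n-2)=6$, so \eqref{11x} is precisely the borderline imbedding $H^{1}(D)\hookrightarrow L^{6}(D)$. On the whole space the Gagliardo--Nirenberg--Sobolev inequality gives $\|\phi\|_{L^{6}(\mathbb{R}^{3})}\lesssim\|\nabla\phi\|_{L^{2}(\mathbb{R}^{3})}\leqslant\|\phi\|_{H^{1}(\mathbb{R}^{3})}$. To transfer this to a domain with the cone condition I would follow the local argument of \cite{Adams1975}: cover $D$ by pieces each congruent to a fixed finite cone, apply the model estimate on each piece after a suitable cutoff, and sum. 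For the slab $\Omega=\mathbb{R}^{2}\times(0,h)$ relevant to this paper the boundary is flat, so a bounded extension operator $E\colon H^{1}(\Omega)\to H^{1}(\mathbb{R}^{3})$ is available, and one simply estimates $\|\phi\|_{L^{6}(\Omega)}\leqslant\|E\phi\|_{L^{6}(\mathbb{R}^{3})}\lesssim\|E\phi\|_{H^{1}(\mathbb{R}^{3})}\lesssim\|\phi\|_{H^{1}(\Omega)}$.

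For \eqref{1} I would exploit that $mp=2\cdot 2=4>3=n$, the regime in which $W^{2,2}$ imbeds into continuous functions. The route is an iteration: applying \eqref{11x} to each first-order derivative of $f$ shows $\nabla f\in L^{6}(D)$, hence $f\in W^{1,6}(D)$ with $\|f\|_{W^{1,6}(D)}\lesssim\|f\|_{H^{2}(D)}$. Since the integrability exponent $6$ now exceeds the dimension $3$, Morrey's inequality yields $W^{1,6}(D)\hookrightarrow C^{0,1/2}(\overline{D})\hookrightarrow C^{0}(\overline{D})$, with H\"older exponent $1-3/6=1/2$, and therefore $\|f\|_{C^{0}(\overline{D})}\lesssim\|f\|_{W^{1,6}(D)}\lesssim\|f\|_{H^{2}(D)}$.

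The only genuinely delicate point is the passage from the model estimates on $\mathbb{R}^{3}$ to the domain $D$ under the mere cone condition, since that condition is weaker than the extension property and a global extension operator need not exist for an arbitrary such domain. This is exactly what the local cone-covering construction in \cite{Adams1975} is designed to handle, so for the general statement I would ultimately rely on \cite[Theorem 4.12]{Adams1975}; for the specific flat-boundary slab used throughout this paper, the extension-based argument above delivers both bounds directly.
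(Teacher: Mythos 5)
Your proposal is correct: the paper gives no proof of Lemma \ref{A.1} at all, simply citing \cite[Theorem 4.12]{Adams1975}, and you ultimately defer to the same reference after verifying the exponent arithmetic ($2^{*}=6$ for $p=2$, $n=3$, and $mp=4>3$ for the continuous imbedding). Your additional sketch via Gagliardo--Nirenberg--Sobolev, the $W^{1,6}\hookrightarrow C^{0,1/2}$ Morrey step, and the remark that the cone condition requires Adams's local covering rather than a global extension operator are all accurate, so this matches the paper's (implicit) approach.
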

\begin{lem}\label{A.2}
Poincar\'e-type inequality  \cite[Lemma A.10]{GT2013}: For any $f\in H_0^1$, it holds that
\begin{align}\label{3}
\|f\|_0\lesssim\|\partial_3f\|_0.
\end{align} 
\end{lem}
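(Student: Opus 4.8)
The plan is to prove the bound first for smooth functions and then extend it by density, exploiting that the slab $\Omega=\mathbb{R}^2\times(0,h)$ is bounded only in the vertical direction. Since $H_0^1$ is the closure of $C_c^\infty(\Omega)$ in the $H^1$-norm, it suffices to establish $\|f\|_0\leqslant h\|\partial_3f\|_0$ for $f\in C_c^\infty(\Omega)$; the inequality then propagates to every $f\in H_0^1$ because both sides are continuous with respect to the $H^1$-norm.

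For such an $f$, the main step is the fundamental theorem of calculus in the vertical variable $x_3$. As $f(x_{\mm{h}},0)=0$ at each horizontal point $x_{\mm{h}}=(x_1,x_2)$, I would write
\begin{align*}
f(x_{\mm{h}},x_3)=\int_0^{x_3}\partial_3f(x_{\mm{h}},s)\,\mm{d}s,
\end{align*}
and then apply the Cauchy--Schwarz inequality, enlarging the interval of integration to $(0,h)$, to obtain the pointwise estimate
\begin{align*}
|f(x_{\mm{h}},x_3)|^2\leqslant x_3\int_0^{x_3}|\partial_3f(x_{\mm{h}},s)|^2\,\mm{d}s\leqslant h\int_0^h|\partial_3f(x_{\mm{h}},s)|^2\,\mm{d}s,
\end{align*}
whose right-hand side is independent of $x_3$.

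The final step is to integrate this estimate over $x_3\in(0,h)$ and then over $x_{\mm{h}}\in\mathbb{R}^2$, invoking Tonelli's theorem to justify the iterated integration. This gives $\int_0^h|f(x_{\mm{h}},x_3)|^2\,\mm{d}x_3\leqslant h^2\int_0^h|\partial_3f(x_{\mm{h}},s)|^2\,\mm{d}s$ for each $x_{\mm{h}}$, and, after integrating in $x_{\mm{h}}$, the desired $\|f\|_0^2\leqslant h^2\|\partial_3f\|_0^2$; taking square roots yields the claim with explicit constant $h$, which depends only on the domain $\Omega$.

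I do not expect a genuine obstacle, since the argument uses only the vanishing of the trace at $x_3=0$ together with the finite vertical extent $h$, and the horizontal unboundedness of $\Omega$ is harmless because the vertical estimate is uniform in $x_{\mm{h}}$ and is integrated last. The sole point deserving care is the validity of the fundamental-theorem representation for general elements of $H_0^1$: by Fubini, $f(x_{\mm{h}},\cdot)\in H^1(0,h)$ is absolutely continuous with vanishing trace for almost every $x_{\mm{h}}$, which is precisely what the density reduction sidesteps.
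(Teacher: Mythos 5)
Your proof is correct. The paper does not prove this lemma at all --- it simply cites \cite[Lemma A.10]{GT2013} --- and your argument (fundamental theorem of calculus in $x_3$ from the vanishing trace, Cauchy--Schwarz, then integration over the slab) is exactly the standard one behind that reference, with the explicit constant $h$. The only point worth noting is that the paper defines $H^1_0$ by the vanishing-trace condition rather than as the closure of $C^\infty_c(\Omega)$; these coincide for the slab, and in any case your closing remark (that $f(x_{\mm{h}},\cdot)\in H^1(0,h)$ with zero trace for a.e.\ $x_{\mm{h}}$) already gives the representation directly without invoking density.
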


\begin{lem}\label{lem4.7}\begin{enumerate}[(1)]
  \item 
Interpolation inequality in $H^j$: Let $D\subset\mathbb{R}^3$ be a domain satisfying the cone condition and  $0\leqslant j<i$, then  it holds that
\begin{align}\label{8}
\|f\|_{j}\lesssim\|f\|_{0}^{1-\frac{j}{i}}\|f\|_{i}^{\frac{j}{i}}\lesssim\varepsilon^{-\frac{j}{i-j}}\|f\|_{0}
+\varepsilon\|f\|_{i}\end{align}
for any  $\varepsilon>0$ and   for any $f\in H^j(D)$.
  \item Interpolation inequalities in $L^p$:
 it holds that
	\begin{align}\label{lemma4.7.1}
	&\| f\|_{L^4}^2\lesssim\|\nabla_{\mm{h}} f\|_{0}\| f\|_{1}\mbox{ for any }f\in H^1,\\
	&\|f\|_{L^\infty}^2\lesssim\|\nabla_{\mm{h}} f\|_{1}\| f\|_{2}\mbox{ for any }f\in H^2\label{lemma4.7.2}.
	\end{align}
\end{enumerate}
\end{lem}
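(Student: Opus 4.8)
The plan is to treat the two parts separately, reducing each to a one- or two-dimensional slicing estimate.

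For part (1), I would first establish $\|f\|_j\lesssim\|f\|_0^{1-j/i}\|f\|_i^{j/i}$ on the whole space and then descend to $D$. Fix a bounded (Stein) extension operator $E\colon H^i(D)\to H^i(\mathbb{R}^3)$, whose existence is guaranteed by the cone condition, and set $g=Ef$. By Plancherel $\|g\|_{\dot H^j}^2=\int_{\mathbb{R}^3}|\xi|^{2j}|\hat g|^2\,\mm{d}\xi$, and writing $|\xi|^{2j}|\hat g|^2=(|\xi|^{2i}|\hat g|^2)^{j/i}(|\hat g|^2)^{1-j/i}$ and applying H\"older's inequality with exponents $(i/j,\,i/(i-j))$ yields $\|g\|_{\dot H^j}\lesssim\|g\|_{\dot H^i}^{j/i}\|g\|_0^{1-j/i}$. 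Summing the analogous bounds over the lower-order seminorms and using $\|f\|_{H^s(D)}\le\|g\|_{H^s(\mathbb{R}^3)}\lesssim\|f\|_{H^s(D)}$ gives the claim on $D$. The second inequality then follows from the first by Young's inequality: with $\theta=j/i$ one has $\|f\|_0^{1-\theta}\|f\|_i^{\theta}=(\varepsilon^{-\theta/(1-\theta)}\|f\|_0)^{1-\theta}(\varepsilon\|f\|_i)^{\theta}\lesssim\varepsilon^{-\theta/(1-\theta)}\|f\|_0+\varepsilon\|f\|_i$, and $\theta/(1-\theta)=j/(i-j)$.

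For the $L^4$ bound I would slice in the vertical variable. For fixed $x_3$ the two-dimensional Ladyzhenskaya inequality gives $\|f(\cdot,x_3)\|_{L^4(\mathbb{R}^2)}^4\lesssim\|f(\cdot,x_3)\|_{L^2(\mathbb{R}^2)}^2\|\nabla_{\mm{h}}f(\cdot,x_3)\|_{L^2(\mathbb{R}^2)}^2$; integrating over $x_3\in(0,h)$ and extracting the supremum of the first factor gives $\|f\|_{L^4(\Omega)}^4\lesssim(\sup_{x_3}\|f(\cdot,x_3)\|_{L^2(\mathbb{R}^2)}^2)\|\nabla_{\mm{h}}f\|_0^2$. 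The remaining supremum is controlled by a one-dimensional Sobolev embedding in $x_3$: since $\partial_3\|f(\cdot,x_3)\|_{L^2(\mathbb{R}^2)}^2=2\int f\partial_3 f\,\mm{d}x_{\mm{h}}$, one gets $\sup_{x_3}\|f(\cdot,x_3)\|_{L^2(\mathbb{R}^2)}^2\lesssim\|f\|_0^2+\|f\|_0\|\partial_3 f\|_0\lesssim\|f\|_0\|f\|_1$. Combining, $\|f\|_{L^4}^2\lesssim\|\nabla_{\mm{h}}f\|_0\|f\|_1$; every quantity stays at first order, so no regularity is lost.

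The $L^\infty$ bound is the delicate one, and I expect it to be the main obstacle. Working with $f$ smooth and decaying at horizontal infinity (then arguing by density), I would start on $\mathbb{R}^3$ from the signed identity $f^2(x)=\int_{-\infty}^{x_1}\int_{-\infty}^{x_2}\int_{-\infty}^{x_3}\partial_1\partial_2\partial_3(f^2)\,\mm{d}y$. Expanding $\partial_1\partial_2\partial_3(f^2)$ produces products of two factors carrying three derivatives in total; the only dangerous piece is $f\,\partial_1\partial_2\partial_3 f$, of third order, but integrating by parts once in $x_1$ converts it into $\partial_1 f\,\partial_2\partial_3 f$, so that after rebalancing every term is a product of two factors of order at most two. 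One then bounds $\|f\|_{L^\infty}^2\lesssim\|\partial_1 f\|_0\|\partial_2\partial_3 f\|_0+\|\partial_2 f\|_0\|\partial_1\partial_3 f\|_0+\|\partial_3 f\|_0\|\partial_1\partial_2 f\|_0$, and each summand is $\lesssim\|\nabla_{\mm{h}}f\|_1\|f\|_2$, since in every summand one may assign the factor containing a horizontal derivative of second order to $\|\nabla_{\mm{h}}f\|_1$ and its partner (of order at most two) to $\|f\|_2$. The genuine difficulty is that $\Omega$ has finite height, so the vertical Newton--Leibniz produces boundary contributions; I would remove them by averaging, writing $f^2(x)=\tfrac1h\int_0^h f^2\,\mm{d}z+\tfrac1h\int_0^h\!\int_z^{x_3}\partial_3(f^2)\,\mm{d}s\,\mm{d}z$, carrying out the horizontal Newton--Leibniz and the crucial horizontal integration by parts under these vertical integrals, which commute with the horizontal operations. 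Verifying that this integration by parts is legitimate and that all horizontal boundary terms vanish is the step demanding the most care.
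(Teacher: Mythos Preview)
Your treatment of part (1) and of the $L^4$ inequality in part (2) is correct and matches the paper: the paper simply cites Adams for (1), and for \eqref{lemma4.7.1} it carries out exactly the horizontal Ladyzhenskaya slicing followed by a one-dimensional Sobolev bound in $x_3$ that you describe.

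For the $L^\infty$ bound \eqref{lemma4.7.2} your route diverges from the paper's, and the paper's is considerably shorter. The paper does not redo a Newton--Leibniz argument at all; instead it invokes the three-dimensional Gagliardo--Nirenberg inequality $\|f\|_{L^\infty}^4\lesssim\|f\|_{L^4}\|\nabla f\|_{L^4}^3$, which immediately gives $\|f\|_{L^\infty}^2\lesssim\|f\|_{W^{1,4}}^2$, and then applies the already-proved estimate \eqref{lemma4.7.1} to both $f$ and $\nabla f$ to obtain $\|f\|_{W^{1,4}}^2\lesssim\|\nabla_{\mm{h}}f\|_0\|f\|_1+\|\nabla_{\mm{h}}\nabla f\|_0\|\nabla f\|_1\lesssim\|\nabla_{\mm{h}}f\|_1\|f\|_2$. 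This bootstrap completely sidesteps the boundary issues you flag, and requires nothing beyond what you have already established for $L^4$.

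Your direct anisotropic Agmon argument is in the right spirit, but as written it has a gap you should be aware of even in the whole-space step. The integration by parts you invoke to convert $f\,\partial_1\partial_2\partial_3 f$ into $\partial_1 f\,\partial_2\partial_3 f$ takes place over the octant $(-\infty,x_1)\times(-\infty,x_2)\times(-\infty,x_3)$, not over $\mathbb{R}^3$, so it produces a boundary term at $y_1=x_1$ depending on the evaluation point $x$; bounding that term uniformly in $x$ requires a further two-dimensional argument that you have not supplied. One can repair this (e.g.\ by iterating the one-dimensional Agmon bound $\|g\|_{L^\infty}^2\leqslant 2\|g\|_{L^2}\|g'\|_{L^2}$ direction by direction rather than expanding $\partial_1\partial_2\partial_3(f^2)$), but given that the $L^4$ bound is already in hand, the paper's two-line bootstrap is the path of least resistance.
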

\begin{pf} The result \eqref{8} can be founded in  \cite[Theorem 5.2]{Adams1975}. Next we derive \eqref{lemma4.7.1} and \eqref{lemma4.7.2}.

It is well-known that \cite{Nirenberg2011}
	\begin{align}
\label{2501241550}
&\|f(x_{\mm{h}},x_3)\|_{L^4(\mathbb{R}^2)}^2\lesssim \|f(x_{\mm{h}},x_3)\|_{L^2(\mathbb{R}^2)}\|\nabla_{\mm{h}}f(x_{\mm{h}},x_3)\|_{L^2(\mathbb{R}^2)}
\mbox{ for a.e. }x_3\in I_h,\\ 
&\label{2501s241550}
\|f(x_{\mm{h}},x_3)\|_{L^\infty(I_h)}^2\lesssim \|f(x_{\mm{h}},x_3)\|_{L^2(I_h)}
\|f(x_{\mm{h}},x_3)\|_{W^{1,2}(I_h)}
\mbox{ for a.e.  }x_{\mm{h}}\in \mathbb{R}^2 \end{align} 
and 
	\begin{align}
\label{2501241553} \|f\|_{L^\infty}^4 \lesssim\| f\|_{L^4} \|\nabla f\|_{L^4}^{3}. \end{align} 
 
Exploiting \eqref{2501241550} and \eqref{2501s241550}, we have
\begin{align}\|f \|_{L^4}^4\lesssim & \| \| f(x_{\mm{h}},x_3)\|_{L^2(\mathbb{R}^2)}^2\|\nabla_{\mm{h}}f(x_{\mm{h}},x_3)\|_{L^2(\mathbb{R}^2)}^2
\|_{L^1(I_h)}\nonumber \\
\lesssim & \|\| f(x_{\mm{h}},x_3)\|_{L^\infty(I_h)} \|_{L^2(\mathbb{R}^2)}^2\|\nabla_{\mm{h}}f \|_{0}^2  
\lesssim    \| f \|_{1}^2\|\nabla_{\mm{h}}f \|_{0}^2 .
\end{align}
Hence \eqref{lemma4.7.1} holds.
  Thus we further deduce from \eqref{lemma4.7.1} and \eqref{2501241553} that
\begin{align}\|f \|_{L^\infty}^2\lesssim &  \|  f\|_{W^{1,4}}^2\lesssim \|\nabla_{\mm{h}} f\|_{1}\| f\|_{2},\end{align}
which yields \eqref{lemma4.7.2}. \hfill $\Box$
\end{pf}

\begin{lem}\label{A.3}
Product estimates (see \cite[Lemma A.3]{JFJHJS2023}): Let $D\subset\mathbb{R}^3$ be a domain satisfying the cone condition, and
$f$, $g$ be functions defined in $D$. Then
\begin{align}\label{4}
\|fg\|_{i}\lesssim
\begin{cases}
\|f\|_{1}\|g\|_{1}&\mbox{for }i=0;\\
\|f\|_{i}\|g\|_{2}&\mbox{for }0\leqslant i\leqslant 2.
\end{cases}
\end{align}
if the norms on the right hand side of the above inequalities are finite.
\end{lem}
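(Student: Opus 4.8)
The plan is to prove the two branches of \eqref{4} by combining H\"older's inequality with the Sobolev embeddings already recorded in Lemma \ref{A.1}, exploiting that $\dim D=3$. First I would assemble the functional-analytic inputs I need: from \eqref{11x} we have $\|\phi\|_{L^6}\lesssim\|\phi\|_1$, and interpolating the intermediate Lebesgue norms between $L^2$ and $L^6$ via $\|\phi\|_{L^p}\leqslant\|\phi\|_{L^2}^{1-\theta}\|\phi\|_{L^6}^{\theta}$ with $1/p=(1-\theta)/2+\theta/6$ gives $\|\phi\|_{L^p}\lesssim\|\phi\|_1$ for every $2\leqslant p\leqslant 6$; in particular $\theta=1/2$ yields $\|\phi\|_{L^3}\lesssim\|\phi\|_1$ and $\theta=3/4$ yields $\|\phi\|_{L^4}\lesssim\|\phi\|_1$. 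From \eqref{1} I also record $\|f\|_{L^\infty}\leqslant\|f\|_{C^0(\overline{D})}\lesssim\|f\|_2$. These embeddings are the only nontrivial ingredients, and they hold on the unbounded $D$ precisely because the cone condition is assumed.

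For the first branch ($i=0$, right-hand side $\|f\|_1\|g\|_1$) I would simply estimate $\|fg\|_0\leqslant\|f\|_{L^4}\|g\|_{L^4}\lesssim\|f\|_1\|g\|_1$ by H\"older and the $L^4$ embedding. For the second branch I would argue by the number of derivatives. When $i=0$, the pairing $L^2\times L^\infty$ gives $\|fg\|_0\leqslant\|f\|_{L^2}\|g\|_{L^\infty}\lesssim\|f\|_0\|g\|_2$. When $i=1$, I treat $fg$ as above and split $\nabla(fg)=(\nabla f)g+f\nabla g$ \emph{asymmetrically}: the first piece is handled by $L^2\times L^\infty$, namely $\|(\nabla f)g\|_0\leqslant\|\nabla f\|_0\|g\|_{L^\infty}\lesssim\|f\|_1\|g\|_2$, while for the piece carrying the derivative on $g$ I use the $L^6\times L^3$ pairing $\|f\nabla g\|_0\leqslant\|f\|_{L^6}\|\nabla g\|_{L^3}\lesssim\|f\|_1\|\nabla g\|_1\leqslant\|f\|_1\|g\|_2$.

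For $i=2$ I would expand $\nabla^2(fg)$ by the Leibniz rule into the terms $(\nabla^2 f)g$, $(\nabla f)(\nabla g)$ and $f\nabla^2 g$, the lower-order contributions being covered by the $i=0,1$ cases. The pure second-order terms are bounded by $L^2\times L^\infty$, namely $\|(\nabla^2 f)g\|_0\lesssim\|f\|_2\|g\|_2$ and $\|f\nabla^2 g\|_0\leqslant\|f\|_{L^\infty}\|\nabla^2 g\|_0\lesssim\|f\|_2\|g\|_2$, and the mixed term by $L^4\times L^4$, namely $\|(\nabla f)(\nabla g)\|_0\leqslant\|\nabla f\|_{L^4}\|\nabla g\|_{L^4}\lesssim\|\nabla f\|_1\|\nabla g\|_1\leqslant\|f\|_2\|g\|_2$. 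Summing all contributions gives $\|fg\|_2\lesssim\|f\|_2\|g\|_2$, which is \eqref{4} with $i=2$.

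Since each inequality is an elementary application of H\"older together with the embeddings of Lemma \ref{A.1}, there is no genuine analytic obstacle here. The only point demanding care is the asymmetric splitting of the first-order term $f\nabla g$, where one must deliberately choose the $L^6\times L^3$ pairing rather than $L^\infty\times L^2$: this is exactly what keeps the right-hand side at $\|f\|_1\|g\|_2$ (with $f$ at only $H^1$) instead of the cruder symmetric bound $\|f\|_2\|g\|_2$, and it is this asymmetry that makes \eqref{4} useful in the energy estimates of Sections \ref{subsec:02}--\ref{subsec:04}.
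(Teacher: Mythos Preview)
Your argument is correct: each branch follows from H\"older's inequality together with the Sobolev embeddings $H^1\hookrightarrow L^p$ ($2\leqslant p\leqslant 6$) and $H^2\hookrightarrow L^\infty$ recorded in Lemma~\ref{A.1}, and your asymmetric $L^6\times L^3$ treatment of $f\nabla g$ is exactly what is needed to keep the $i=1$ bound at $\|f\|_1\|g\|_2$.

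The paper does not supply its own proof of this lemma; it simply quotes the result from \cite[Lemma A.3]{JFJHJS2023}. Your write-up therefore fills in what the paper leaves as a black-box citation, and the route you take---Leibniz expansion plus termwise H\"older/Sobolev pairings---is the standard one, so there is nothing to contrast.
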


\begin{lem}\label{A.5}
A Hodge-type elliptic estimate \cite[Lemma A.4]{JJZ2022}: Let $i\geqslant1$, then
\begin{align}\label{7}
\|\nabla w\|_{i-1}\lesssim\|(\mm{div}  w,\mm{curl} w)\|_{i-1}\mbox{ for any }w\in H^i\mbox{ with }w_3|_{\partial\Omega}=0.
\end{align}
\end{lem}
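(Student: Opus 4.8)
The plan is to prove \eqref{7} by induction on $i$, resting on the classical div--curl (Lagrange) identity specialized to the slab $\Omega=\mathbb{R}^2\times(0,h)$. The base case $i=1$ is the heart of the matter. First I would record the pointwise identity
\[
|\mm{div}\,w|^2+|\mm{curl}\,w|^2=|\nabla w|^2+\sum_{j\neq k}\big(\partial_j(w_j\partial_k w_k)-\partial_k(w_j\partial_j w_k)\big),
\]
which comes from expanding $|\mm{curl}\,w|^2=|\nabla w|^2-\partial_jw_k\partial_kw_j$ together with $|\mm{div}\,w|^2$ and rewriting the surviving cross terms in divergence form. Integrating over $\Omega$ and invoking the divergence theorem turns the last sum into a boundary integral on $\partial\Omega$. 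Because $\partial\Omega=\mathbb{R}^2\times\{0,h\}$ is flat with unit normal $\mathbf{n}=(0,0,\pm1)$, only summands carrying the index $3$ remain, and each of these contains either $w_3$ or a tangential derivative $\partial_1w_3$, $\partial_2w_3$; all vanish on $\partial\Omega$ since $w_3|_{\partial\Omega}=0$ identically on the flat boundary. Thus the boundary term disappears and I obtain the sharp identity $\|\nabla w\|_0^2=\|(\mm{div}\,w,\mm{curl}\,w)\|_0^2$, proving the case $i=1$ (after the usual approximation of $w\in H^1$ by smooth fields preserving $w_3|_{\partial\Omega}=0$).

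For $i\geqslant2$ it suffices to dominate $\|\partial^\gamma w\|_0$ for every multi-index $1\leqslant|\gamma|\leqslant i$ by $\|(\mm{div}\,w,\mm{curl}\,w)\|_{i-1}$. When $\gamma$ involves no $\partial_3$, I would split off one horizontal derivative, $\partial^\gamma=\partial_l\partial^{\gamma'}$ with $l\in\{1,2\}$ and $|\gamma'|\leqslant i-1$, and apply the base case to $\partial^{\gamma'}w$; this is legitimate since $(\partial^{\gamma'}w)_3|_{\partial\Omega}=0$ (horizontal, hence tangential, derivatives of $w_3\equiv0$ vanish on the flat boundary), yielding $\|\partial^\gamma w\|_0\leqslant\|\nabla\partial^{\gamma'}w\|_0=\|(\partial^{\gamma'}\mm{div}\,w,\partial^{\gamma'}\mm{curl}\,w)\|_0\leqslant\|(\mm{div}\,w,\mm{curl}\,w)\|_{i-1}$. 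When $\gamma$ contains $\partial_3$, I would eliminate it using the algebraic relations
\[
\partial_3w_1=(\mm{curl}\,w)_2+\partial_1w_3,\quad \partial_3w_2=\partial_2w_3-(\mm{curl}\,w)_1,\quad \partial_3w_3=\mm{div}\,w-\partial_1w_1-\partial_2w_2,
\]
which trade one $\partial_3$ for a horizontal derivative at the cost of a derivative of $\mm{div}\,w$ or $\mm{curl}\,w$ of total order $|\gamma|-1\leqslant i-1$. Iterating this substitution lowers the $\partial_3$-count to zero, reducing every such term to purely horizontal derivatives (already controlled) plus div/curl contributions lying in $H^{i-1}$.

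The hard part will be the boundary integral in the base case: this is precisely where the slab geometry and the hypothesis $w_3|_{\partial\Omega}=0$ are indispensable, and the estimate would genuinely fail for a general curved boundary or a different boundary condition. Once that identity is in hand, the inductive step is essentially bookkeeping — one must only track that each substitution keeps the total order at most $i$ while strictly decreasing the number of $\partial_3$ factors, so that all the $\mm{div}\,w$ and $\mm{curl}\,w$ terms generated along the way indeed belong to $H^{i-1}$.
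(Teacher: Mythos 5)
Your proof is correct, but it is worth noting that the paper does not actually prove this lemma: it is quoted verbatim from \cite[Lemma A.4]{JJZ2022}, so there is no in-paper argument to compare against. Your self-contained route via the classical div--curl identity is a legitimate replacement and in fact gives more than the stated $\lesssim$: integrating
$|\mm{div}\,w|^2+|\mm{curl}\,w|^2=|\nabla w|^2+\partial_j\big(w_j\,\mm{div}\,w\big)-\partial_k\big(w_j\partial_jw_k\big)$
over the slab and checking that the flat-boundary terms carry only $w_3$ or its tangential derivatives yields the exact identity $\|\nabla w\|_0^2=\|(\mm{div}\,w,\mm{curl}\,w)\|_0^2$, and your higher-order step (tangential differentiation of the base case plus the algebraic elimination of $\partial_3$ via the components of $\mm{curl}\,w$ and $\mm{div}\,w$) correctly reduces every derivative of order up to $i$ to controlled quantities. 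Two points deserve one more sentence each in a polished write-up: first, for $w\in H^1$ the trace of $\partial_jw_3$ does not exist, so the boundary computation must be performed for smooth fields (compactly supported in the horizontal directions, so that nothing is lost at infinity in the unbounded slab) with $w_3|_{\partial\Omega}=0$, and the resulting identity extended by density, which you do flag; second, in the inductive step you should note explicitly that the trace of a tangential derivative $\partial^{\gamma'}w_3$ with $\gamma'$ horizontal is the tangential derivative of the (vanishing) trace, which is what licenses applying the base case to $\partial^{\gamma'}w$. With those remarks made precise, the argument is complete.
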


\begin{lem}\label{A.8} Helmholtz decomposition in $L^2$-spaces (see Lemma 2.5.1 in Chapter II in \cite{MR1928881}): Let $D\subseteq \mathbb{R}^n$, $n\geqslant 2$, be any domain. We define that
$$\mathcal{L}^2(D):=\overline{\mathcal{C}_{0}^{\infty}(D )}^{\|\cdot\|_{L^2(D)}},\ \mathcal{C}_{0}^{\infty}(D ):=\{w\in C_{0}^{\infty}(D )~|~\mm{div}  w=0\} $$
and
$$G(D ):=\{f\in L^2(D )~|~f=\nabla g\mbox{ for some scalar function } g \in L^2_{\mm{loc}}(D)\}.$$
Then
$$G(D )=\left\{f\in L^2(D )~\bigg|~\int_D f\cdot w\mm{d}x=0\mbox{ for all }w\in \mathcal{L}^2(D )\right\},$$ 
and each $f\in L^2(D )$ has a unique decomposition
$$f=\tilde{f}+\nabla g$$
with $\tilde{f}\in \mathcal{L}^2(D )$, $\nabla g\in G(D )$, $\int_D \tilde{f}\cdot \nabla g\mm{d}x=0$ and
$$\|f\|_0^2=\|\tilde{f}\|_0^2+\|\nabla g\|_0^2.$$
\end{lem}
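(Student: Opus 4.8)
The plan is to build the whole statement on the orthogonal projection theorem in the Hilbert space $L^2(D)$, equipped with the inner product $\langle f,g\rangle=\int_D f\cdot g\,\mm{d}x$. First I would note that $\ml{L}^2(D)$ is, by its very definition as the $\|\cdot\|_0$-closure of $\ml{C}_0^\infty(D)$, a closed linear subspace of $L^2(D)$. The projection theorem then yields the orthogonal splitting
$$L^2(D)=\ml{L}^2(D)\oplus\ml{L}^2(D)^\perp,$$
so that every $f\in L^2(D)$ has a unique decomposition $f=\tf f+h$ with $\tf f\in\ml{L}^2(D)$, $h\in\ml{L}^2(D)^\perp$, $\langle\tf f,h\rangle=0$ and the Pythagorean identity $\|f\|_0^2=\|\tf f\|_0^2+\|h\|_0^2$. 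The uniqueness, the orthogonality and the norm identity in the second assertion are then immediate, \emph{provided} one can identify the complement $\ml{L}^2(D)^\perp$ with the gradient space and write $h=\nabla g$. Hence the whole lemma reduces to proving the first assertion, $G(D)=\ml{L}^2(D)^\perp$.

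For the easy inclusion $G(D)\subseteq\ml{L}^2(D)^\perp$, I would take $f=\nabla g\in G(D)$ with $g\in L^2_{\mm{loc}}(D)$ and any $w\in\ml{C}_0^\infty(D)$, so that $\mm{div}\,w=0$ and $w$ is compactly supported. Integrating by parts in the distributional sense gives
$$\int_D \nabla g\cdot w\,\mm{d}x=-\int_D g\,\mm{div}\,w\,\mm{d}x=0.$$
Since $\ml{C}_0^\infty(D)$ is, by construction, dense in $\ml{L}^2(D)$ and the pairing is continuous in the second slot, $f$ is orthogonal to all of $\ml{L}^2(D)$, i.e. $f\in\ml{L}^2(D)^\perp$.

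The real work is the reverse inclusion $\ml{L}^2(D)^\perp\subseteq G(D)$: if $f\in L^2(D)$ satisfies $\int_D f\cdot w\,\mm{d}x=0$ for every divergence-free $w\in C_0^\infty(D)$, then $f=\nabla g$ for some $g\in L^2_{\mm{loc}}(D)$. This is precisely the de Rham-type characterization of gradients, and I would invoke de Rham's theorem to produce a distribution $g\in\ml{D}'(D)$ with $f=\nabla g$ in $\ml{D}'(D)$. The regularity of $g$ is then upgraded for free: since $\nabla g=f\in L^2(D)$, a local Poincaré/Weyl argument places $g\in H^1_{\mm{loc}}(D)\subset L^2_{\mm{loc}}(D)$, so indeed $f\in G(D)$.

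Combining the three steps shows $G(D)=\ml{L}^2(D)^\perp$ is a closed subspace, and the projection decomposition becomes $f=\tf f+\nabla g$ with all the asserted properties. I expect the genuinely nontrivial point to be the de Rham characterization in the third step, valid for an \emph{arbitrary} domain $D$ without boundary regularity; this is exactly the ingredient that forces one to quote the result from \cite{MR1928881} rather than assemble it from the elementary embedding, interpolation and product estimates collected in this appendix.
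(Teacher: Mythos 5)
The paper does not prove this lemma at all --- it is quoted verbatim from Sohr (Lemma 2.5.1, Chapter II of \cite{MR1928881}), so there is no in-paper argument to compare against. Your outline is precisely the standard proof given in that reference: closedness of $\mathcal{L}^2(D)$ plus the Hilbert-space projection theorem, the easy inclusion $G(D)\subseteq \mathcal{L}^2(D)^{\perp}$ by distributional integration by parts, and the identification $\mathcal{L}^2(D)^{\perp}\subseteq G(D)$ via the de Rham--type gradient criterion together with the local regularity upgrade $\nabla g\in L^2_{\mathrm{loc}}\Rightarrow g\in L^2_{\mathrm{loc}}$. The sketch is correct; the only honest caveat is that the de Rham characterization on an \emph{arbitrary} domain (and the accompanying Ne\v{c}as-type local estimate) carries essentially all of the difficulty, and you invoke it rather than prove it --- which is exactly what the paper itself does by citing Sohr.
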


\begin{lem}\label{012243}
An elliptic estimate for the Dirichlet boundary value condition: Let $i\geqslant0$, ${f}^1\in H^{i}$ and ${f}^2\in H^{i+2}$ be given, then there exists a unique solution $w\in H^{i+2}$ solving the boundary value problem:
\begin{equation*}
\begin{cases}
\Delta w={f}^1&\mbox{in }\Omega,\\
w={f}^2&\mbox{on }\partial \Omega.
\end{cases}
\end{equation*}
Moreover,
\begin{equation}\label{11}
\|w\|_{i+2}\lesssim\|{f}^1\|_{i}+\|{f}^2\|_{i+2}.
\end{equation}
\end{lem}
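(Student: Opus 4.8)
The plan is to homogenize the boundary condition and then treat existence/uniqueness and the regularity estimate separately. Setting $v:=w-f^2$, the problem is equivalent to $\Delta v=f^1-\Delta f^2=:g$ in $\Omega$ with $v|_{\partial\Omega}=0$; since $f^2\in H^{i+2}$ we have $g\in H^{i}$ with $\|g\|_i\lesssim\|f^1\|_i+\|f^2\|_{i+2}$, so it suffices to solve $\Delta v=g$, $v|_{\partial\Omega}=0$ with $\|v\|_{i+2}\lesssim\|g\|_i$ and then set $w=v+f^2$. First I would produce a weak solution $v\in H^1_0$ by the Lax--Milgram theorem applied to $a(v,\phi)=\int_\Omega\nabla v\cdot\nabla\phi\,\mm{d}x$; coercivity on $H^1_0$ is furnished by the Poincar\'e-type inequality \eqref{3}, which on the slab bounds $\|v\|_0$ by $\|\partial_3 v\|_0\leqslant\|\nabla v\|_0$. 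Uniqueness within $H^{i+2}$ then follows at once: the difference of two solutions lies in $H^2\cap H^1_0$, is harmonic, and vanishes after testing against itself.

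For the estimate I would start with the base case $i=0$. The crucial identity on the flat slab is $\|\Delta v\|_0^2=\|\nabla^2 v\|_0^2$ for $v\in H^2\cap H^1_0$: each cross term $\int\partial_j^2 v\,\partial_k^2 v\,\mm{d}x$ integrates by parts to $\int(\partial_j\partial_k v)^2\,\mm{d}x$, the only boundary contribution involving a tangential first derivative $\partial_k v$ (with $k\in\{1,2\}$) evaluated on $x_3=0,h$, which vanishes because $v|_{\partial\Omega}=0$. Combined with the $H^1$ bound $\|\nabla v\|_0\lesssim\|g\|_0$ read off from $a(v,v)=-\int g v\,\mm{d}x$ and \eqref{3}, this gives $\|v\|_2\lesssim\|g\|_0$. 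For general $i$ I would differentiate the equation by horizontal derivatives $\partial_{\mm{h}}^\alpha:=\partial_1^{\alpha_1}\partial_2^{\alpha_2}$ with $|\alpha|\leqslant i$: because $\Omega$ is translation invariant in $x_{\mm{h}}$ and the homogeneous boundary condition is preserved, $\partial_{\mm{h}}^\alpha v$ solves the same type of problem with datum $\partial_{\mm{h}}^\alpha g$, so the base case yields $\|\partial_{\mm{h}}^\alpha v\|_2\lesssim\|g\|_i$ and hence controls every derivative of $v$ of order $\leqslant i+2$ carrying at most two $x_3$-derivatives. The remaining derivatives, those with at least three $x_3$-derivatives, are recovered algebraically from the rewritten equation $\partial_3^2 v=g-\Delta_{\mm{h}}v$: applying $\partial_3^{\beta_3-2}\partial_{\mm{h}}^{\beta_{\mm{h}}}$ and inducting downward on the number of normal derivatives reduces that number by two at each step while keeping the order of $g$ at most $i$, closing the bound at $\|v\|_{i+2}\lesssim\|g\|_i$.

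I expect the main technical point to be the rigorous justification that the weak solution genuinely belongs to $H^{i+2}$ on the \emph{unbounded} slab, rather than merely satisfying the displayed estimates formally. I would settle this by Nirenberg's difference-quotient method in the horizontal variables, which is well adapted here since $\Omega$ is a full $\mathbb{R}^2$ in $x_{\mm{h}}$, so horizontal difference quotients of $v$ satisfy uniformly bounded energy estimates and converge to the claimed tangential derivatives; an equivalent route is to take the partial Fourier transform in $x_{\mm{h}}$ and solve the Dirichlet ODE $\partial_3^2\hat v-|\xi|^2\hat v=\hat g$ on $(0,h)$, whose unique solvability and Plancherel-level bounds are uniform in $\xi$. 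Once tangential regularity is legitimate, the normal-derivative bootstrap through the equation needs no further regularity input, and the only place the slab geometry enters essentially is the careful bookkeeping of boundary terms in the flat-boundary integration by parts.
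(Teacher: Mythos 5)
Your argument is correct, but it is genuinely different from what the paper does: the paper offers no proof at all for this lemma, merely deferring to \cite[Lemma A.7]{JJZ2022}, whereas you supply a complete, self-contained derivation. Your route --- homogenizing via $v=w-f^2$, producing the weak solution by Lax--Milgram with coercivity from the Poincar\'e-type inequality \eqref{3}, establishing the base case through the flat-slab identity $\|\Delta v\|_0=\|\nabla^2 v\|_0$ for $v\in H^2\cap H^1_0$ (where, as you correctly note, every surviving boundary term carries a tangential first derivative of $v$ on $\{x_3=0,h\}$ and hence vanishes), and then climbing to order $i+2$ by horizontal differentiation plus the algebraic bootstrap $\partial_3^2v=g-\Delta_{\mm{h}}v$ --- is the standard and appropriate argument for this geometry, and your attention to the unboundedness of the slab (difference quotients in $x_{\mm{h}}$, or equivalently the partial Fourier transform reducing to the uniformly invertible Dirichlet ODE $\partial_3^2\hat v-|\xi|^2\hat v=\hat g$ on $(0,h)$) addresses exactly the point where a formal computation would otherwise be incomplete. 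What the paper's citation buys is brevity; what your proof buys is a verifiable argument tailored to the slab $\mathbb{R}^2\times(0,h)$, including the constant's independence of any bounded-domain structure, which is what the uniform-in-$\kappa$ scheme of the paper actually requires. One small point worth making explicit if you write this up: the integration by parts used for uniqueness and for the second-order identity on the unbounded slab should be justified by a cutoff or density argument in $H^2\cap H^1_0$, though this is routine.
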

\begin{pf}
Please refer to \cite[Lemma A.7]{JJZ2022} for the proof.
\hfill$\Box$
\end{pf}
\begin{lem}\label{A.9}
An elliptic estimate for the Neumann boundary value condition: Let $a$ be a positive constant and $i\geqslant 0$, then there exists a unique solution $w\in H^{i+2}$ solving the boundary value problem:
\begin{equation*}
\begin{cases}
-a\Delta w=\mm{div}{f}&\mbox{in }\Omega,\\
\partial_{{\mathbf{n}}}w=0&\mbox{on }\partial \Omega,
\end{cases}
\end{equation*}
where ${\mathbf{n}}$ denotes the outward unit normal vector to $\partial \Omega$. Moreover,
\begin{equation}\label{12}
\|\nabla w\|_{i+1}\lesssim\|{f}\|_{0}+\|\mm{div}{f}\|_{i}.
\end{equation}
\end{lem}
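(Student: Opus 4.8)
The plan is to treat the problem variationally for existence and uniqueness, and then to upgrade to the $H^{i+2}$ estimate \eqref{12} via an integration-by-parts identity that exploits the flatness of $\partial\Omega$ together with the homogeneous Neumann condition. First I would reformulate the problem weakly: a function $w$, defined modulo additive constants and with $\nabla w\in L^2(\Omega)$, is a weak solution provided
\begin{align*}
a\int_\Omega\nabla w\cdot\nabla\phi\,\dx=-\int_\Omega f\cdot\nabla\phi\,\dx\quad\mbox{for every }\phi\mbox{ with }\nabla\phi\in L^2(\Omega),
\end{align*}
which simultaneously encodes the equation $-a\Delta w=\mm{div} f$ and the Neumann condition. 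On the Hilbert space of such $w$ with inner product $\int_\Omega\nabla w\cdot\nabla\phi\,\dx$, the right-hand side is a bounded linear functional of norm at most $\|f\|_0$, so the Riesz representation theorem (equivalently Lax--Milgram) produces a unique $w$ modulo constants, together with the energy bound
\begin{align*}
\|\nabla w\|_0\lesssim\|f\|_0 .
\end{align*}
Uniqueness within the class $w\in H^{i+2}$ then follows since two solutions differ by a constant, and a nonzero constant cannot belong to $L^2$ over the infinite-measure slab $\Omega$.

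Next I would prove the regularity gain, whose engine is the exact identity $\|\nabla^2 w\|_0=\|\Delta w\|_0$, valid on the slab under the Neumann condition. Indeed, integrating $\int_\Omega|\nabla^2 w|^2\,\dx$ by parts, the horizontal-horizontal block reduces (over $\mathbb{R}^2$, with no boundary contribution) to $\|\Delta_{\mm{h}}w\|_0^2$; the mixed block $\sum_{i=1}^2\|\partial_i\partial_3 w\|_0^2$ equals $\int_\Omega\partial_3^2 w\,\Delta_{\mm{h}}w\,\dx$ after discarding a boundary term proportional to $\partial_3 w|_{\partial\Omega}=0$; and these assemble with $\|\partial_3^2 w\|_0^2$ into exactly $\|\Delta_{\mm{h}}w+\partial_3^2 w\|_0^2=\|\Delta w\|_0^2$. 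Since $\Delta w=-a^{-1}\mm{div} f$, this already delivers the case $i=0$ once combined with the energy bound.

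I would then iterate this identity. Horizontal derivatives $\partial_{\mm{h}}^{\beta}w$ solve the same Neumann problem with datum $\partial_{\mm{h}}^{\beta}\mm{div} f$ and still satisfy $\partial_3\partial_{\mm{h}}^{\beta}w|_{\partial\Omega}=0$, so the identity yields $\|\nabla^2\partial_{\mm{h}}^{\beta}w\|_0\lesssim\|\mm{div} f\|_{|\beta|}$; taking $|\beta|\le i$ controls every derivative of $w$ of order $i+2$ that carries at most two vertical differentiations. The remaining derivatives, those with three or more vertical differentiations, are recovered directly from the equation via $\partial_3^2 w=-a^{-1}\mm{div} f-\Delta_{\mm{h}}w$, which lowers the vertical order by two at the cost of $\mm{div} f$ and horizontally-differentiated quantities already estimated. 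Summing over all multi-indices gives $\|\nabla^{i+2}w\|_0\lesssim\|\mm{div} f\|_i$, and together with $\|\nabla w\|_0\lesssim\|f\|_0$ this is precisely the claimed bound \eqref{12}.

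The main obstacle I anticipate is rigor rather than novelty: the integration-by-parts identity and the horizontal-differentiation step presuppose enough smoothness to justify the boundary manipulations, whereas the variational construction only supplies $\nabla w\in L^2$. I would close this gap by a standard approximation and difference-quotient argument, first establishing the $H^2$ and then the $H^{i+2}$ bounds for smoothed data and for horizontal difference quotients (whose uniform $L^2$ control upgrades to genuine weak derivatives) and only afterwards passing to the limit. Some care is also warranted because $w$ itself need not lie in $L^2(\Omega)$ on the unbounded slab, so throughout I would phrase the induction solely in terms of $\nabla w$ and its higher derivatives, which is exactly the content required by \eqref{12}.
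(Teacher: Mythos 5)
Your proposal is correct, and it is genuinely more self-contained than what the paper offers: the paper's ``proof'' of Lemma \ref{A.9} consists of citing \cite[Lemma A.7]{JJZ2022} and \cite[Theorem 1.3.1, Chapter III]{MR1928881}, so there is no in-text argument to match. Your route --- Lax--Milgram for the weak problem with $\|\nabla w\|_0\lesssim\|f\|_0$, the exact identity $\|\nabla^2w\|_0=\|\Delta w\|_0$ from integration by parts on the flat slab under $\partial_3 w|_{\partial\Omega}=0$, horizontal translation invariance to commute $\partial_{\mm{h}}^{\beta}$ through the problem, and recovery of the remaining vertical derivatives from $\partial_3^2w=-a^{-1}\mm{div}f-\Delta_{\mm{h}}w$ --- is the standard slab argument and does deliver \eqref{12}; the difference-quotient regularization you flag is exactly what is needed to legitimize the boundary manipulations. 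Two points deserve explicit mention in a final write-up. First, your variational formulation encodes the \emph{natural} boundary condition $a\partial_{\mathbf{n}}w+f\cdot\mathbf{n}=0$ rather than $\partial_{\mathbf{n}}w=0$; the strong Neumann condition in the statement is recovered only when $f\cdot\mathbf{n}|_{\partial\Omega}=0$, or when the condition is read weakly as in the cited theorem of \cite{MR1928881}. This is harmless for the paper's actual use in Lemma \ref{lem2.6}, where $f$ is $-\nabla u_j$ with $\partial_3u_j|_{\partial\Omega}=0$, but it should be stated. Second, your caveat that $w$ itself need not lie in $L^2(\Omega)$ is apt: on $\mathbb{R}^2\times(0,h)$ one cannot in general choose an additive constant placing $w$ in $L^2$, so the literal claim $w\in H^{i+2}$ is stronger than what the construction gives; phrasing the conclusion entirely in terms of $\nabla w$, which is all that \eqref{12} and the paper's applications require, is the right resolution.
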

\begin{pf}
Please refer to \cite[Lemma A.7]{JJZ2022} and \cite[1.3.1 Theorem in Chapter III]{MR1928881}.
\hfill$\Box$
\end{pf}

\begin{lem}\label{lem2.1}
Let $(\sigma,u)\in  {\mathfrak{P}} _{T}\times {\mathcal{V}}_{T}$ be the solution of \eqref{1.7} with the initial condition $\sigma^0\in { {H}}^3_{\bar{\rho}}$, then $\sigma$ satisfies the following boundary condition:  
\begin{align}&\label{2.1}
(\sigma,\partial_3\rho, \partial_3^2\sigma)|_{\partial\Omega}=0, 
\end{align}
where $\rho=\bar{\rho}+\kappa  ^{-1 }\sigma$.
\end{lem}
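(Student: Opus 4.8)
The plan is to read off the three boundary conditions in \eqref{2.1} from the regularity built into the space $\mathfrak{P}_T$, the structure of the transport equation $\eqref{1.7}_1$, and the Navier boundary condition \eqref{20220202081737} on $u$. Recall that $\sigma \in C^0(\overline{I_T}, H^3_{\bar\rho})$ by hypothesis, and that by the very definition of the space $H^3_{\bar\rho}$ we have, for every $\phi \in H^3_{\bar\rho}$, the boundary constraints $\phi|_{\partial\Omega}=0$, $\partial_3^2\phi|_{\partial\Omega}=0$ and $\partial_3\phi|_{\partial\Omega}=-\kappa\bar\rho'|_{\partial\Omega}$. Since $\sigma(t) \in H^3_{\bar\rho}$ for each $t$, the first and third of these hold for $\sigma$ as well.

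First I would dispose of the two conditions $\sigma|_{\partial\Omega}=0$ and $\partial_3^2\sigma|_{\partial\Omega}=0$: these are literally the membership conditions $\phi|_{\partial\Omega}=0$ and $\partial_3^2\phi|_{\partial\Omega}=0$ that define $H^3_{\bar\rho}$, so they are immediate from $\sigma(t) \in H^3_{\bar\rho}$. The only nontrivial component is the middle condition $\partial_3\rho|_{\partial\Omega}=0$. Writing $\rho = \bar\rho + \kappa^{-1}\sigma$ and using $\bar\rho'=a=1$ (by \eqref{202501232saf128}), we have $\partial_3\rho = \bar\rho' + \kappa^{-1}\partial_3\sigma$, so that $\partial_3\rho|_{\partial\Omega}=0$ is equivalent to $\partial_3\sigma|_{\partial\Omega}=-\kappa\bar\rho'|_{\partial\Omega}$, which is precisely the third defining constraint of $H^3_{\bar\rho}$. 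Thus all three conditions in \eqref{2.1} are encoded directly in the function space, and the lemma is essentially a matter of unpacking the definition of $H^3_{\bar\rho}$ together with the relation $\rho=\bar\rho+\kappa^{-1}\sigma$.

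The point worth verifying, and what I expect to be the genuine (if modest) content, is that this boundary structure is consistent with, and in fact forced by, the transport equation $\eqref{1.7}_1$ together with the boundary condition $u_3|_{\partial\Omega}=0$ from \eqref{20220202081737}. Evaluating $\sigma_t + u\cdot\nabla\sigma + \kappa u_3 = 0$ on $\partial\Omega$ and using $u_3|_{\partial\Omega}=0$ and $\sigma|_{\partial\Omega}=0$ (hence $\sigma_t|_{\partial\Omega}=0$ and $\nabla_{\mm h}\sigma|_{\partial\Omega}=0$), the equation reduces to $u_3\,\partial_3\sigma|_{\partial\Omega}=0$, which is automatically satisfied; this shows the trace $\partial_3\sigma|_{\partial\Omega}=-\kappa\bar\rho'|_{\partial\Omega}$ is propagated in time consistently, so that $\sigma(t)$ remains in $H^3_{\bar\rho}$ for all $t$, as asserted by $\sigma\in\mathfrak P_T$. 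The main (and only) obstacle is therefore bookkeeping: one must be careful that the relation defining $\rho$ and the normalization $\bar\rho'=1$ are applied correctly so that the third defining condition of $H^3_{\bar\rho}$ translates exactly into $\partial_3\rho|_{\partial\Omega}=0$. Since the regularity $\sigma\in C^0(\overline{I_T},H^3_{\bar\rho})$ guarantees that all the relevant traces are well defined and continuous in $t$, no further approximation argument is needed, and the proof concludes by collecting the three trace identities.
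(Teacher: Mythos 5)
Your argument is circular: you read all three boundary conditions off the membership $\sigma(t)\in H^3_{\bar\rho}$, but that membership at times $t>0$ is precisely what the lemma is there to establish. The only data one is entitled to use are the regularity $\sigma\in C^0(\overline{I_T},H^3)$, the fact that the \emph{initial datum} $\sigma^0$ lies in $H^3_{\bar\rho}$, the equations \eqref{1.7}, and the boundary conditions \eqref{20220202081737} on $u$; the content of the lemma is that the trace conditions encoded in $H^3_{\bar\rho}$ are \emph{propagated} in time by the transport equation. (If the conclusion were literally contained in the hypothesis, the lemma would be vacuous and the paper would not prove it; the space $\mathfrak{P}_T$ is to be understood as the regularity class in which the local solution is constructed, and this lemma is the step that verifies the boundary constraints persist.) Your ``consistency check'' has the same defect: you use $\sigma|_{\partial\Omega}=0$ to conclude $\sigma_t|_{\partial\Omega}=0$, which verifies compatibility but derives nothing.

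The paper's proof is a genuine propagation argument. Restricting $\eqref{1.7}_1$ to $\partial\Omega$ and using $u_3|_{\partial\Omega}=0$ gives $(\sigma_t+u_{\mm{h}}\cdot\nabla_{\mm{h}}\sigma)|_{\partial\Omega}=0$; pairing with $\sigma$ on $\partial\Omega$, integrating by parts horizontally and applying Gronwall's inequality with $\sigma^0|_{\partial\Omega}=0$ yields $\sigma|_{\partial\Omega}=0$. For the remaining two conditions one applies $\partial_3$ (resp.\ $\partial_3^2$) to the mass/transport equation and restricts to $\partial\Omega$; here the terms $\partial_3u\cdot\nabla\rho$ and $2\partial_3u_3\,\partial_3^2\sigma$ survive, and one needs the Navier--slip conditions $\partial_3u_1|_{\partial\Omega}=\partial_3u_2|_{\partial\Omega}=0$ together with $\mm{div}\,u=0$ to reduce them to zeroth--order terms in $\partial_3\rho$ (resp.\ $\partial_3^2\sigma$) so that Gronwall again applies, starting from $\partial_3\rho^0|_{\partial\Omega}=0$ and $\partial_3^2\sigma^0|_{\partial\Omega}=0$. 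None of this appears in your proposal, so the essential mathematical content of the lemma is missing. Your translation of the constraint $\partial_3\phi|_{\partial\Omega}=-\kappa\bar\rho'|_{\partial\Omega}$ into $\partial_3\rho|_{\partial\Omega}=0$ is correct, but it only identifies what must be proved, not a proof of it.
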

\begin{proof}
The above result  \eqref{2.1} can be found in \cite[Lemma 2.1]{JZZa2024}. However we provide the proofs for reader's convenience. In view of the mass equation \eqref{1.7}$_1$ and the boundary condition of $u_3$ in \eqref{20220202081737}, it holds that
\begin{align*}
(\sigma_t+u_{\mm{h}}\cdot\nabla_{\mm{h}}\sigma)|_{\partial\Omega}=0.
\end{align*}
Taking the inner product of the above identity and $\sigma$ in $L^2{(\partial\Omega)}$, and then using the integration by parts and the embedding inequality of $H^2\hookrightarrow C^0(\overline{\Omega})$ in (\ref{1}), we derive that
$$\frac{\mm{d}}{\mm{d}t}\int_{\partial\Omega}|\sigma|^2\mm{d}x_{\mm{h}}
=-\int_{\partial\Omega}u_{\mm{h}}\cdot\nabla_{\mm{h}}|\sigma|^2\mm{d}x_{\mm{h}}
\lesssim\|\mm{div}_{\mm{h}} u_{\mm{h}}\|_2\int_{\partial\Omega}|\sigma|^2\mm{d}x_{\mm{h}}.$$
Noting that $\int_0^T\|\mm{div}_{\mm{h}} u_{\mm{h}}\|_2\mm{d}\tau<\infty$ and $\sigma^0|_{\partial\Omega}=0$, thus applying Gronwall's inequality to the above inequality yields
$$\|\sigma\|^2_{L^2(\partial\Omega)}=0,$$
which implies
\begin{align}\label{2.5}
\sigma|_{\partial\Omega}=0.
\end{align}	

Applying $\partial_3$ to \eqref{1.2}$_1$, then using the boundary condition   \eqref{20220202081737}, we can compute out that
\begin{equation*}
(\partial_{3}\rho_t+u_{\mm{h}}\cdot\nabla_{\mm{h}}\partial_{3}\rho
+\partial_{3}u_3\partial_{3}\rho)|_{\partial\Omega}=0,
\end{equation*}
where $\rho:=\bar{\rho}+\kappa  ^{-1 }\sigma$.
Following the similar argument of \eqref{2.5} by further using the incompressible condition \eqref{1.7}$_3$ and the boundary condition $\partial_3\rho^0|_{\partial\Omega}=0$, we easily derive from the above identity that
\begin{equation}\label{2.6}
\partial_3\rho|_{\partial\Omega}=0.
\end{equation}

Similarly, applying $\partial^2_3$ to \eqref{1.7}$_1$, and then using $\partial_3^2\sigma^0|_{\partial\Omega}=0$, the incompressible condition and the boundary conditions in \eqref{20220202081737}, \eqref{2.5}, we have
\begin{equation*} 	
(\partial^2_{3}\sigma_t+u_{\mm{h}}\cdot\nabla_{\mm{h}}\partial^2_{3}\sigma
+2\partial_{3}u_3\partial^2_{3}\sigma)|_{\partial\Omega}=0,
\end{equation*}
which obviously implies that
\begin{equation}\label{2.7}
\partial_3^2\sigma|_{\partial\Omega}=0.
\end{equation}
Thus we arrive at \eqref{2.1}, which presents that the solution $\sigma$ satisfies the same boundary conditions as well as the initial data $\sigma^0$. 
\end{proof}
\begin{lem}\label{lem2.6}
Under the assumption of Lemma \ref{lem2.1}, we have the following estimates:
	\begin{align} 
		&\| \nabla^iu\|_2\lesssim \|  \nabla^i u\|_0 +\| \Delta  \nabla^i u\|_0\mbox{ for }i=0,\ 1, \label{20240129182211}  \\
	&\|\sigma\|_3\lesssim\|\nabla \Delta \sigma\|_0,\label{2.52}\\
&\|\partial_s\sigma\|_2\lesssim\|\partial_s\Delta \sigma\|_0\mbox{ for }s=1,\ 2,\ t .\label{2.46}
\end{align} 
\end{lem}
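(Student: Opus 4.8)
The plan is to reduce all three inequalities to the elliptic regularity results already collected in the appendix --- the Dirichlet estimate of Lemma \ref{012243} and the Neumann estimate of Lemma \ref{A.9} --- after first reading off the correct boundary conditions of each quantity from the Navier-slip condition \eqref{20220202081737}, the incompressibility, and the boundary relations \eqref{2.1} of Lemma \ref{lem2.1}. No integration-by-parts (Bochner) identity is then needed.

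For \eqref{20240129182211} I would split $u$ according to the boundary behaviour of its components. The vertical component obeys the homogeneous Dirichlet condition $u_3|_{\partial\Omega}=0$, so applying Lemma \ref{012243} to $\Delta u_3=(\Delta u)_3$ with vanishing boundary datum gives $\|u_3\|_{i+2}\lesssim\|\Delta u_3\|_i$. The horizontal components satisfy the Neumann condition $\partial_3u_\alpha|_{\partial\Omega}=0$ for $\alpha=1,2$; writing $-\Delta u_\alpha=\mm{div}(-\nabla u_\alpha)$ and invoking Lemma \ref{A.9} yields $\|\nabla u_\alpha\|_{i+1}\lesssim\|\nabla u_\alpha\|_0+\|\Delta u_\alpha\|_i$. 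Taking $i=0$ and $i=1$ and collecting the three components bounds $\|\nabla^iu\|_2$ by $\|\nabla^iu\|_0$, $\|\Delta\nabla^iu\|_0$ and an intermediate-order remainder $\|\Delta u\|_0$ (for $i=1$), which is absorbed into $\|\nabla u\|_0+\|\Delta\nabla u\|_0$ by the interpolation inequality \eqref{8}; this is \eqref{20240129182211}.

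For \eqref{2.46} set $\psi=\partial_s\sigma$. When $s=1,2$ the relation $\sigma|_{\partial\Omega}=0$ in \eqref{2.1} forces the tangential derivative $\psi|_{\partial\Omega}=0$; when $s=t$ I would restrict the transport equation \eqref{1.7}$_1$ to $\partial\Omega$ and use $u_3|_{\partial\Omega}=0$ together with $\nabla_{\mm{h}}\sigma|_{\partial\Omega}=0$ (again from $\sigma|_{\partial\Omega}=0$) to deduce $\sigma_t|_{\partial\Omega}=0$. In each case $\psi$ solves the homogeneous Dirichlet problem with $\Delta\psi=\partial_s\Delta\sigma$, so Lemma \ref{012243} with $i=0$ gives at once $\|\psi\|_2\lesssim\|\partial_s\Delta\sigma\|_0$. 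For \eqref{2.52}, $\sigma$ itself solves the Dirichlet problem with right-hand side $\Delta\sigma$, so Lemma \ref{012243} with $i=1$ gives $\|\sigma\|_3\lesssim\|\Delta\sigma\|_1$; the remaining task is to discard the zeroth-order piece $\|\Delta\sigma\|_0$, which I would do by observing that $\Delta\sigma|_{\partial\Omega}=\Delta_{\mm{h}}\sigma+\partial_3^2\sigma=0$ on $\partial\Omega$ (both terms vanish by \eqref{2.1}), so $\Delta\sigma\in H^1_0$ and the Poincar\'e inequality \eqref{3} yields $\|\Delta\sigma\|_0\lesssim\|\nabla\Delta\sigma\|_0$, whence $\|\Delta\sigma\|_1\lesssim\|\nabla\Delta\sigma\|_0$.

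The analytic content is therefore entirely carried by the cited elliptic and interpolation lemmas, and the only genuinely delicate step is the boundary-condition bookkeeping. Recognising that the velocity splits into a Dirichlet part $u_3$ and a Neumann part $u_{\mm{h}}$ --- so that two different elliptic estimates are required --- and extracting the hidden boundary conditions $\Delta\sigma|_{\partial\Omega}=0$ and $\sigma_t|_{\partial\Omega}=0$ from \eqref{2.1} and the equation \eqref{1.7}$_1$ is exactly what makes the sharp right-hand sides come out, in particular the clean $\|\nabla\Delta\sigma\|_0$ in \eqref{2.52} with no lower-order remainder. I expect this bookkeeping, rather than any individual estimate, to be the main obstacle.
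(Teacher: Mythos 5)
Your proposal is correct and follows essentially the same route as the paper: the paper's own (very terse) proof likewise combines the Dirichlet estimate of Lemma \ref{012243} for the quantities vanishing on $\partial\Omega$, the Neumann estimate of Lemma \ref{A.9} for the horizontal velocity components, and the interpolation inequality \eqref{8} to absorb the intermediate terms, relying on exactly the boundary conditions \eqref{20220202081737} and \eqref{2.1} that you identify. The extra bookkeeping you supply (in particular $\Delta\sigma|_{\partial\Omega}=0$ together with Poincar\'e's inequality to obtain the clean right-hand side of \eqref{2.52}, and $\sigma_t|_{\partial\Omega}=0$ from the transport equation) is precisely what the paper leaves implicit.
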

\begin{proof}Similar results can be found in \cite[Lemma 2.2]{JZZa2024}, however we also provide the proofs of \eqref{20240129182211}--\eqref{2.46} for reader's convenience.  

Recalling the boundary condition of $u$ in  \eqref{20220202081737}, we use both the elliptic estimates in Lemmas \ref{012243} and \ref{A.9} to deduce that
$$\| \nabla u\|_1\lesssim\|\nabla u\|_0+ \| \Delta u\|_0$$ 
and
$$\| \nabla\partial_j u\|_1\lesssim\|\nabla \partial_j  u\|_0+ \| \Delta\partial_j  u\|_0\mbox{ for }1\leqslant j\leqslant 3,$$ 
which, together with the interpolation inequality \eqref{8},   imply \eqref{20240129182211}.

Similarly, thanks to the boundary conditions of $(\sigma,\partial_3^2\sigma)$ in \eqref{2.1}, we also exploit  the elliptic estimate  in Lemma \ref{012243} to deduce \eqref{2.52}--\eqref{2.46}. This completes the proof of Lemma \ref{lem2.6}. 
\end{proof}
\begin{thm}\label{A.7}
Aubin--Lions theorem \cite[Theorem 1.71]{NASII04}: Let $T>0$, $X\hookrightarrow\hookrightarrow B\hookrightarrow Y$ be Banach spaces, $\{f_n\}_{n=1}^\infty$ a sequence bounded in $L^q(I_T,B)\cap L^1(I_T,X)$ and $\{\mm{d}f_n/\mm{d}t\}_{n=1}^\infty$ bounded in $L^1(I_T,Y)$, where $1<q\leqslant \infty$. Then $\{f_n\}_{n=1}^\infty$ is relatively compact in $L^p(I_T,B)$ for any $1\leqslant p<q$.
\end{thm}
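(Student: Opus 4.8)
The plan is to deduce Theorem~\ref{A.7} from the classical characterization of relatively compact subsets of the vector-valued (Bochner) space $L^p(I_T,B)$, combining an Ehrling-type interpolation inequality with a time-translation estimate supplied by the derivative bound. Write $M_1,M_2,M_3$ for uniform bounds, so that $\|f_n\|_{L^q(I_T,B)}\le M_1$, $\|f_n\|_{L^1(I_T,X)}\le M_2$ and $\|\mm{d}f_n/\mm{d}t\|_{L^1(I_T,Y)}\le M_3$ for all $n$, and denote the time-shift by $\tau_hg(t):=g(t+h)$. First I would record Ehrling's lemma: since $X\hookrightarrow\hookrightarrow B\hookrightarrow Y$, for every $\eta>0$ there exists $C_\eta>0$ such that
\[
\|v\|_B\le\eta\|v\|_X+C_\eta\|v\|_Y\quad\text{for all }v\in X.
\]
This is the device that allows the \emph{compact} embedding into $B$ to interact with the merely continuous embedding into $Y$, in which the available time-regularity lives.

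Next I would verify the two hypotheses of Simon's compactness criterion (equivalently, the Riesz--Fr\'echet--Kolmogorov theorem in the $B$-valued setting) ensuring relative compactness in $L^p(I_T,B)$: (i) for every $0<t_1<t_2<T$ the set of time-averages $\big\{\int_{t_1}^{t_2}f_n(t)\,\mm{d}t\big\}_n$ is relatively compact in $B$; and (ii) $\sup_n\|\tau_hf_n-f_n\|_{L^p(I_{T-h},B)}\to0$ as $h\to0^+$. Condition (i) follows at once from $\big\|\int_{t_1}^{t_2}f_n\,\mm{d}t\big\|_X\le\int_{t_1}^{t_2}\|f_n\|_X\,\mm{d}t\le M_2$, which bounds the averages uniformly in $X$, so that the compact embedding $X\hookrightarrow\hookrightarrow B$ delivers relative compactness in $B$.

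For condition (ii) I would first control the time-translate in the weak norm $Y$ by the derivative bound, using Tonelli's theorem,
\[
\int_0^{T-h}\|\tau_hf_n-f_n\|_Y\,\mm{d}t\le\int_0^{T-h}\!\!\int_t^{t+h}\Big\|\tfrac{\mm{d}f_n}{\mm{d}s}\Big\|_Y\,\mm{d}s\,\mm{d}t\le h\,M_3,
\]
and then combine this with Ehrling's lemma and the $L^1(X)$-bound to obtain the translation estimate in $B$,
\[
\int_0^{T-h}\|\tau_hf_n-f_n\|_B\,\mm{d}t\le2\eta M_2+C_\eta\,h\,M_3,
\]
which, upon choosing $\eta$ small and then $h$ small, shows $\sup_n\|\tau_hf_n-f_n\|_{L^1(I_{T-h},B)}\to0$. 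To upgrade this from $L^1(B)$ to $L^p(B)$ I would interpolate: for $1\le p<q$ there is $\theta\in[0,1)$ with $\tfrac1p=(1-\theta)+\tfrac{\theta}{q}$, whence
\[
\|\tau_hf_n-f_n\|_{L^p(I_{T-h},B)}\le\|\tau_hf_n-f_n\|_{L^1(I_{T-h},B)}^{1-\theta}\,\|\tau_hf_n-f_n\|_{L^q(I_{T-h},B)}^{\theta}\le\big(2\eta M_2+C_\eta hM_3\big)^{1-\theta}(2M_1)^{\theta}.
\]
Since $\theta<1$, the right-hand side tends to $0$ uniformly in $n$ as $h\to0^+$, which is precisely (ii); the criterion then yields the claimed relative compactness in $L^p(I_T,B)$.

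The main obstacle is the mismatch of the three integrability exponents in time: the bound in $X$ and the derivative bound are only $L^1$, whereas the target is $L^p(B)$ with $p$ allowed all the way up to (but not equal to) $q$. The $L^1$-nature of the $X$-bound rules out pointwise-in-$t$ compactness of $\{f_n(t)\}$ in $B$, forcing the use of time-averages in step~(i); and the $L^1$-nature of the derivative bound only controls translates in the weak norm $Y$, a defect repaired exactly by Ehrling's lemma in step~(ii). The strict inequality $p<q$ enters through the interpolation exponent $\theta<1$: without it the $L^q$-boundedness could not absorb the potentially non-vanishing high-integrability part of the translates. A secondary point is that I would invoke Simon's characterization of compact sets in Bochner spaces as a black box, or else reprove it directly from the scalar Riesz--Fr\'echet--Kolmogorov theorem adapted to $B$-valued functions.
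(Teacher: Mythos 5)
The paper offers no proof of this statement: Theorem \ref{A.7} is quoted verbatim from the literature (Theorem 1.71 in \cite{NASII04}) as an analysis tool, so there is nothing internal to compare against. Your argument is the standard Aubin--Lions--Simon proof --- Simon's compactness criterion verified via (i) the $L^1(I_T,X)$ bound on time-averages plus the compact embedding $X\hookrightarrow\hookrightarrow B$, and (ii) the translation estimate in $Y$ upgraded to $B$ by Ehrling's lemma and then to $L^p(I_{T-h},B)$ by interpolation against the $L^q(I_T,B)$ bound, where $p<q$ gives $\theta<1$ --- and it is correct, modulo the acknowledged black-box use of Simon's characterization of relatively compact sets in Bochner spaces.
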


\begin{thm}\label{A.10}
Arzel\'a--Ascoli theorem \cite[Theorem 1.70]{NASII04}:
Let $T>0$ and $B$, $X$ be Banach spaces such that $B\hookrightarrow\hookrightarrow X$ is compact. Let $f_n$ be a sequence of functions $\overline{I_T}\rightarrow B$ uniformly bounded in $B$ and uniformly continuous in $X$. Then there exists $f\in C^0(\overline{I_T},B)$ such that $f_n\rightarrow f$ strongly in $C^0(\overline{I_T},X)$ at least for a chosen subsequence.
\end{thm}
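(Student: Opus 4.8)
The plan is to run the classical Arzel\'a--Ascoli scheme, using the compact embedding $B\hookrightarrow\hookrightarrow X$ to gain pointwise precompactness in $X$ and then promoting it to uniform convergence through the equicontinuity in $X$. First I would set $M:=\sup_{n}\sup_{t\in\overline{I_T}}\|f_n(t)\|_{B}<\infty$. For each fixed $t\in\overline{I_T}$ the set $\{f_n(t)\}_{n\in\mathbb{N}}$ is then bounded in $B$, so the compactness of $B\hookrightarrow\hookrightarrow X$ makes it relatively compact in $X$. Fixing a countable dense subset $\{t_k\}_{k\in\mathbb{N}}\subset\overline{I_T}$ (say the rationals in $[0,T]$), I would run a Cantor diagonal extraction: pass to a subsequence along which $f_n(t_1)$ converges in $X$, then a further subsequence converging at $t_2$, and so on; the diagonal subsequence, still denoted $\{f_n\}$, satisfies that $f_n(t_k)$ is $X$-convergent (hence $X$-Cauchy) for every $k$.

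Next I would upgrade convergence on the dense set to uniform convergence on $\overline{I_T}$. Given $\varepsilon>0$, the uniform continuity in $X$ supplies $\delta>0$ with $\|f_n(t)-f_n(s)\|_{X}<\varepsilon$ whenever $|t-s|<\delta$, uniformly in $n$. Covering the compact interval $\overline{I_T}$ by finitely many balls of radius $\delta$ centred at points $t_{k_1},\dots,t_{k_N}$ of the dense set, and using that $\{f_n(t_{k_j})\}_n$ is $X$-Cauchy for each $j$, the three-$\varepsilon$ estimate
\[
\|f_m(t)-f_n(t)\|_{X}\le \|f_m(t)-f_m(t_{k_j})\|_{X}+\|f_m(t_{k_j})-f_n(t_{k_j})\|_{X}+\|f_n(t_{k_j})-f_n(t)\|_{X}
\]
(with $t_{k_j}$ the centre closest to $t$) shows that $\{f_n\}$ is uniformly Cauchy in $X$ over $\overline{I_T}$. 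Since $C^0(\overline{I_T},X)$ is complete, the subsequence converges strongly therein to a limit $f$, and uniform limits of continuous maps are continuous, so $f\in C^0(\overline{I_T},X)$ and $f_n\to f$ in $C^0(\overline{I_T},X)$, as asserted.

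The remaining task is to transfer the regularity of the limit from $X$ to $B$, and this is the step I expect to be the main obstacle, because a uniform $B$-bound is not automatically inherited under mere $X$-convergence. For each $t$ the bound $\|f_n(t)\|_{B}\le M$ holds; in the Sobolev-space setting relevant to the applications $B$ is reflexive, so $\{f_n(t)\}_n$ admits a weakly convergent subsequence in $B$, and since $B\hookrightarrow X$ continuously its weak-$B$ limit must coincide with the strong-$X$ limit $f(t)$, forcing $f(t)\in B$ with $\|f(t)\|_{B}\le\liminf_n\|f_n(t)\|_{B}\le M$ by weak lower semicontinuity of the norm. This already yields $f\in L^\infty(\overline{I_T},B)$, and the delicate point is then to combine the $X$-continuity of $f$, the uniform $B$-bound, and the reflexive structure of $B$ to conclude $f\in C^0(\overline{I_T},B)$ (continuity being understood in the topology of $B$ inherited from this weak-compactness argument); this closure step, rather than the diagonal extraction, is where the genuine work lies.
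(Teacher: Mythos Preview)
The paper does not prove this theorem at all: it is stated as a quoted tool with a bare citation to \cite[Theorem 1.70]{NASII04}, and no argument is supplied. So there is nothing in the paper to compare your proposal against beyond the reference.

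That said, your plan is the standard Arzel\`a--Ascoli scheme and is correct for the conclusion $f_n\to f$ in $C^0(\overline{I_T},X)$: pointwise precompactness from the compact embedding, diagonal extraction on a countable dense set, and the three-$\varepsilon$ upgrade via equicontinuity are exactly the expected steps. Your identification of the delicate point is also apt: under the stated hypotheses (uniform $B$-bound and equicontinuity only in $X$) one obtains $f(t)\in B$ with $\sup_t\|f(t)\|_B\le M$ by weak lower semicontinuity when $B$ is reflexive, and one can show $t\mapsto f(t)$ is weakly continuous in $B$, but \emph{strong} continuity in $B$ does not follow without additional structure. In the applications of this paper (Sobolev spaces, where the relevant $B$ is a Hilbert space and the $f_n$ themselves lie in $C^0(\overline{I_T},B)$) the conclusion as written is fine, but as a general Banach-space statement the membership $f\in C^0(\overline{I_T},B)$ should be read either as ``$f$ is $B$-valued and $X$-continuous'' or as requiring the extra reflexivity/weak-continuity caveat you outlined. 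Your caution here is warranted, not a gap in your argument.
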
\begin{thm} \label{asdA.7}
 Banach--Alaoglu theorem (see Sections 1.4.5.25 and 1.4.5.26 in \cite{NASII04}):
  \begin{enumerate}[(1)]
\item  Let $X$ be a reflexive Banach space and let $\{u_n\}\subset X$ be a bounded sequence. Then there exists a subsequence $\{u_{n_k}\}_{k=1}^{\infty}$ weakly convergent in $X$. 
\item Another version of the Banach--Alaoglu theorem: Let $X$ be a separable Banach space and let $\{f_n\}\subset X^*$ be a bounded sequence. Then there exists a subsequence $\{f_{n_k}\}_{k=1}^{\infty}$ weakly-$*$ convergent in $X^*$.
 \end{enumerate} 
\end{thm}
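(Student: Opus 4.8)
The plan is to prove the two assertions in reverse order: I would establish the sequential weak-$*$ statement (2) first by a diagonal argument on a countable dense set, and then deduce the reflexive statement (1) from it by reducing to a separable subspace. For (2), first I would fix the bound $M:=\sup_n\|f_n\|_{X^*}<\infty$ and choose a countable dense subset $\{x_k\}_{k=1}^\infty$ of the separable space $X$. For each fixed $k$ the scalar sequence $\{f_n(x_k)\}_n$ is bounded by $M\|x_k\|$, so by the Bolzano--Weierstrass theorem it has a convergent subsequence; a standard diagonal extraction then produces a single subsequence, still denoted $\{f_{n_j}\}$, for which $\lim_{j\to\infty}f_{n_j}(x_k)$ exists for \emph{every} $k$. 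The substantive step is to upgrade convergence on the dense set to convergence on all of $X$: given $x\in X$ and $\varepsilon>0$, I would pick $x_k$ with $\|x-x_k\|<\varepsilon$ and estimate
\begin{align*}
|f_{n_j}(x)-f_{n_l}(x)|\leqslant |f_{n_j}(x-x_k)|+|f_{n_j}(x_k)-f_{n_l}(x_k)|+|f_{n_l}(x_k-x)|\leqslant 2M\varepsilon+|f_{n_j}(x_k)-f_{n_l}(x_k)|,
\end{align*}
so that $\{f_{n_j}(x)\}_j$ is Cauchy in the scalar field and hence convergent.

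I would then define $f(x):=\lim_{j\to\infty}f_{n_j}(x)$. Linearity of $f$ follows by passing to the limit in the linearity of each $f_{n_j}$, while $|f(x)|=\lim_j|f_{n_j}(x)|\leqslant M\|x\|$ shows $f\in X^*$ with $\|f\|_{X^*}\leqslant M$. By construction $f_{n_j}(x)\to f(x)$ for every $x\in X$, which is precisely the statement that $\{f_{n_j}\}$ converges weakly-$*$ to $f$ in $X^*$, proving (2).

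For the reflexive case (1), I would reduce to (2). Set $Y:=\overline{\mathrm{span}}\{u_n\}_{n=1}^\infty\subset X$, a closed subspace that is separable by construction. Since a closed subspace of a reflexive space is reflexive, $Y$ is reflexive, and a separable reflexive space has separable dual, so $Y^*$ is separable. Viewing $\{u_n\}$ inside $Y^{**}=(Y^*)^*$ via the canonical isometric embedding $J:Y\to Y^{**}$, the sequence $\{Ju_n\}$ is bounded in the dual of the separable space $Y^*$; applying (2) yields a subsequence $\{Ju_{n_j}\}$ converging weakly-$*$ to some $F\in Y^{**}$. Reflexivity gives $F=Ju$ for a unique $u\in Y$, and unwinding the pairing shows $\phi(u_{n_j})\to\phi(u)$ for every $\phi\in Y^*$, i.e. $u_{n_j}\rightharpoonup u$ weakly in $Y$. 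Finally, any $\psi\in X^*$ restricts to $\psi|_Y\in Y^*$, whence $\psi(u_{n_j})\to\psi(u)$, and therefore $u_{n_j}\rightharpoonup u$ weakly in $X$, completing (1).

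The hard part is not the analytic manipulation in (2) — the diagonal extraction and the $\varepsilon/3$ argument are routine — but rather the structural functional-analytic inputs underpinning the reduction in (1): that a closed subspace of a reflexive space is again reflexive, and that a separable reflexive space has separable dual. I would not reprove these but cite them from a standard reference (e.g. the relevant sections of \cite{NASII04}), so that the whole of (1) rests cleanly on the self-contained argument for (2).
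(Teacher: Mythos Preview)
Your argument is correct and is in fact the standard textbook proof of these two sequential compactness results. However, the paper does not supply its own proof of this theorem: it is stated as a background analytic tool and simply referenced to Sections 1.4.5.25 and 1.4.5.26 of \cite{NASII04}. So there is nothing to compare against beyond saying that your self-contained derivation goes well beyond what the paper does, which is merely to quote the result.
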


\vspace{4mm}
\noindent\textbf{Acknowledgements.}
The research of Fei Jiang was supported by NSFC (Grant Nos. 12371233 and 12231016), and the Natural Science Foundation of Fujian Province of China (Grant Nos. 2024J011011 and 2022J01105) and the Central Guidance on Local Science and Technology Development Fund of Fujian Province (Grant No. 2023L3003).

\renewcommand\refname{References}
\renewenvironment{thebibliography}[1]{%
\section*{\refname}
\list{{\arabic{enumi}}}{\def\makelabel##1{\hss{##1}}\topsep=0mm
\parsep=0mm
\partopsep=0mm\itemsep=0mm
\labelsep=1ex\itemindent=0mm
\settowidth\labelwidth{\small[#1]}
\leftmargin\labelwidth \advance\leftmargin\labelsep
\advance\leftmargin -\itemindent
\usecounter{enumi}}\small
\def\newblock{\ }
\sloppy\clubpenalty4000\widowpenalty4000
\sfcode`\.=1000\relax}{\endlist}
\bibliographystyle{model1b-num-names}
\bibliography{refs}
\end{CJK*}
\end{document}